\def\({\left(}
\def\){\right)}
\def\Nx{\nabla_x}
\def\Cal{\mathcal}
\def\Om{\Omega}
\def\eb{\varepsilon}
\def\dist{{\rm dist}}
\def\R {\mathbb{R}}
\newcommand{\be}{\begin{equation} }
\newcommand{\ee}{\end{equation} }
\def \p {\partial}
\def \and{\qquad\text{and}\qquad}
\def\Bbb{\mathbb}
\def\Dt{\partial_t}
\def\Dx{\Delta_x}
\def\({\left(}
\def\){\right)}
\def\Nx{\nabla_x}
\def\divv{\operatorname{div}}
\def\eb{\varepsilon}
\def\Cal{\mathcal}
\def\eb{\varepsilon}
\def\Om{\Omega}
\def\dist{{\rm dist}}
\def\R {\mathbb{R}}
\def\<{\left<}
\def\>{\right>}
\def \p {\partial}
\def \and{\qquad\text{and}\qquad}
\def\Bbb{\mathbb}
\def\Dt{\partial_t}
\def\Dx{\Delta_x}
\newtheorem{proposition}{Proposition}[section]
\newtheorem{theorem}[proposition]{Theorem}
\newtheorem{corollary}[proposition]{Corollary}
\newtheorem{lemma}[proposition]{Lemma}
\theoremstyle{definition}
\newtheorem{definition}[proposition]{Definition}
\newtheorem{remark}[proposition]{Remark}
\numberwithin{equation}{section}
\def\be{\begin{equation}}
\def\ee{\end{equation}}
\def\bp{\begin{proof}}
\def\ep{\end{proof}}
\def \no#1#2#3 {{\bf #1} (#3), #2.}
\def \eds#1#2#3 {#1, #2, #3.}
\title[Compressible Brinkman-Forcheimer equation]
{Asymptotic regularity and attractors for slightly compressible Brinkman-Forcheimer equations}
\author[V. Kalantarov and S. Zelik]
{ Varga  Kalantarov${}^{1,2}$ and Sergey Zelik${}^{3,4}$}
\address{${}^1$ Department of Mathematics,
\newline\indent Ko{\c c} University, Rumelifeneri Yolu, Sariyer, Istanbul, Turkey
}
\address{${}^2$ Department of General and Applied Mathematics,
\newline\indent Azerbaijan State Oil and Insustry University, Baku, Azerbaijan
}
\email{vkalantarov@ku.edu.tr}
\address{${}^3$
Department of Mathematics,\newline \indent
University of Surrey, GU27XH,
Guildford,  UK}
\address{${}^4$ School of Mathematics and Statistics, Lanzhou University, Lanzhou
\newline\indent 730000,
P.R. China}
\email{s.zelik@surrey.ac.uk}
\begin{document}

\begin{abstract} Slightly compressible Brinkman-Forchheimer equations
 in a bounded 3D domain with Dirichlet boundary conditions are considered. These equations  model fluids
motion in porous media.
The dissipativity of these equations in higher order energy spaces is obtained and regularity
 and smoothing properties of the solutions are studied. In addition, the existence of a global
 and an exponential attractors for these equations in a natural phase space is verified.
\end{abstract}

\subjclass[2010]{35B40, 35B45, 35K10}
\keywords{Brinkman-Forch\-hei\-mer equations, compressible fluid, tidal equations, dissipativity, global attractor,
exponential attractor, regularity of solutions, localization}
\thanks{This work is partially supported by  the RSF grant   19-71-30004  as well as  the EPSRC
grant EP/P024920/1}

\maketitle
\tableofcontents

\bigskip
\section{Introduction}

We give a comprehensive study of  slightly compressible Brinkman-Forch\-hei\-mer
equations in the following form:
\begin{equation}\label{bf1}
\begin{cases}
\Dt u-\Dx u+\Nx p+f(u)= g, \ \ u\big|_{\partial\Omega}=0,\  u\big|_{t=0}=u_0,
 \\ \Dt p+\divv(Du)=0, \ \ p\big|_{t=0}=p_0
\end{cases}
\end{equation}
in a bounded  domain $\Omega\subset\R^3$ with sufficiently smooth boundary $\p\Om$. Here $u=(u^1(t,x),u^2(t,x),u^3(t,x))$ and
 $p=p(t,x)$ are unknown velocity vector field and pressure respectively, $D$ is a given positive
  self-adjoint matrix, $f$ is a given nonlinearity and
  $g$ is the external force.
\par
Equations of the form \eqref{bf1} arise in the mathematical theory of fluids in porous media and
 are of a big permanent interest from both theoretical and applied points of view,
 see \cite{Aul,Brin,FG,GT,HR,KZ,Lad1,MTT,Mus,R,Str,Tem1,Whit} and references therein. The first equation
  of \eqref{bf1} is usually interpreted as a generalization of the Darcy law:
  $$
  \tau\Dt v-\beta\Dx v+f(v)=-D\Nx p,
  $$
where $D$ is a normalized permeability tensor, $f(v)$ is a Forchheimer nonlinearity which
typically has a form
\begin{equation}\label{0.fF}
f(v)=\alpha v+\beta (\Cal Cv.v)^{l}v+\gamma\sqrt{(\Cal Cv.v)}v,
\end{equation}
where $\Cal C$ is another positive self-adjoint matrix and $\alpha,\beta,\gamma$ and $l\ge\frac12$ are
 some constants, see e.g., \cite{Aul} and references therein, $\beta\Dx v$ is a Brinkman term with
 effective viscosity parameter $\beta>0$, see \cite{Brin} and $\tau\Dt v$ is time relaxation
 term which is especially important in the case of non-monotone $f$ or/and presence of the
  inertial term $(v,\Nx)v$ to provide the unique expression of $v$ through $\Nx p$. The second equation
  $$
  \Dt p+\divv(v)=0
  $$
is just a standard slightly compressible approximation of the continuity equation,
see e.g., \cite{Tem1,LST,Donat} and references therein. Making the change of variables $v=Du$, we end up with a system
 of the form \eqref{bf1} with a slightly unusual term $\divv(Du)$.
\par
We also mention that the equations \eqref{bf1} in 2D case naturally arise in the dynamic
theory of tides as a generalization of the classical Laplace tidal equations. In this case,
$u:=(u^1,u^2)$ is the horizontal transport vector
 (the horizontal velocity  averaged over the vertical axis)
  and the scalar  $p$ is a  vertical tidal elevation,
   see, e.g.,  \cite{Go,Ip, Li,MK,Mo} and references therein.
\par
Equations \eqref{bf1} have a non-trivial structure which is interesting also
from purely mathematical point of view. Indeed, in the simplest case $f=g=0$ $D=1$, we may introduce
a new variable $\omega=\operatorname{curl}u$ and reduce the system to the following equations
$$
\Dt\omega-\Dx\omega=0,\ \ \Dt^2 p-\Dx \Dt p-\Dx p=0,
$$
so we see a combination of a heat equation with the so-called strongly damped wave equation. This
system is decoupled in the case of periodic boundary conditions, but in the case of Dirichlet
boundary conditions we have a non-trivial coupling already on the level of linear equations
 through boundary conditions. Thus, in contrast to the incompressible case, one cannot expect instantaneous
 smoothing property for $u$ and $p$, but similarly to damped wave equation, one can expect that
 some components of the solution may have this property, see \cite{KZ09} for more details. Of course, for
  non-zero nonlinearity $f$, we also have coupling through nonlinear terms.
 \par
 Another possibility  is to differentiate the first equation in time and exclude
  the pressure using the second equation. This gives the second order in time equation:
  $$
  \Dt^2 u-\Dx \Dt u+f'(u)\Dt u-\Nx\divv(Du)=0
  $$
which is again a sort of strongly damped wave equation with the nonlinearity of
 Van der Pol type, see e.g., \cite{KZ14} for the regularity and longtime behavior
  of such equations in the scalar case. However, this form of equations \eqref{bf1} is not
   convenient especially for the study of longtime behavior since the operator
    $\Nx \divv(Du)$ is degenerate.
\par
The longtime behavior of solutions to {\it incompressible} Brinkman-Forch\-heimer or
 Brink\-man-Forchheimer-Navier-Stokes equations (often also referred as tamed Na\-vier-Stokes equations)
  is studied in many papers, see \cite{HR,KZ,MTT,WL,YZZ} and references therein. However, the slightly
   compressible case is essentially less understood. To the best of our knowledge, similar problems
    have been considered only in 2D case only for slightly compressible Navier-Stokes equations,
    see \cite{GT} and \cite{FG} for global and exponential attractors respectively, but even
    in this case, Dirichlet's boundary conditions were out of consideration because of the
     problems with obtaining dissipative estimates for the pressure in $H^1(\Omega)$ which are
      caused by "bad" boundary terms in higher energy estimates.
\par
The aim of the present paper is to verify the global well-posedness and dissipativity of the
problem \eqref{bf1} in the initial phase space
$$
(u_0,p_0)\in E:=H^1_0(\Omega)\times \bar L^2(\Omega),\ \ \bar L^2(\Omega):=\{p_0\in L^2(\Omega), \<p_0\>=0\},
$$
where $\<v\>$ is a mean value of the function $v(x)$ as well as in the higher energy space
$$
E^1=E\cap(H^2(\Omega)\times H^1(\Omega))
$$
and to prove the existence of global and exponential attractors for the associated solution semigroup. Note that,
similarly to \cite{GT}, we are unable to verify the dissipativity in $E^1$ using the
 energy-type estimates because of the appearance of "bad" boundary integrals. We overcome this problem
  using the combination of partial instantaneous smoothing property and localization technique inspired
   by \cite{KZ09}. Actually, the localization technique is used here in a bit non-standard way, since it
    is usually applied to verify the higher regularity. In our situation, this higher regularity is more or
    less straightforward and the localization is used in order to get the dissipative estimate only,
    see Appendix \ref{s4} for more details.
\par
Throughout of the paper, we assume that the external force $g\in L^2(\Omega)$ and the
 nonlinearity $f(u)$ has the following form
 \begin{equation}\label{0.fstr}
 f(u):=\varphi(|u|^2)u,
 \end{equation}
where $\varphi\in C^1((0,\infty))$ and satisfies the conditions:
\begin{equation}\label{0.f}
\begin{cases}
1.\ \  K-Cz^{-1/2}\le\varphi'(z)\le C_1z^{-1/2}(1+z^{3/2}),\\
 2.\ \ -C+\alpha z^{l}\le \varphi(z)\le C(1+z^{l}),\ \ z\in\R_+
\end{cases}
\end{equation}
for some positive constants $\alpha,K,C,C_1$ and the exponent $l\in(0,2]$.
\par
Clearly these conditions are satisfied for the typical nonlinearity \eqref{0.fF} if $\Cal C=1$
 (or $D\Cal CD=1$
if we take into the account the change of variables mentioned above). The case of a general
self-adjoint positive $\Cal C$ is completely analogous, we only need to take
$\Cal C u.u$ instead of $|u|^2$
 in \eqref{0.fstr}, we assume that $\Cal C=1$ only for simplicity. In contrast to this, the extra assumption
  that $D=1$ somehow oversimplifies the problem since some additional energy type identities hold in this
   particular case, so we prefer to keep a general matrix $D$. We also mention that the exponent
    ${-1/2}$ is fixed in \eqref{0.f} in order to handle the term $\sqrt{(\Cal Cu.u)}\,u$ in \eqref{0.fF}.
     Of course, if $l=\frac12$, we need to assume that $\gamma+\beta>0$ in \eqref{0.fF}
     in order to get dissipativity.
      Analogously, for $l>\frac12$, we need to assume that $\beta>0$.
\par
The paper is organized as follows. In \S1 we derive the basic  dissipative estimate for
problem \eqref{bf1} in the energy phase space $E$, verify the existence and uniqueness of solutions
 and prove, some instantaneous regularization for the $u$ component of the solution. Namely,
 we establish that, starting from $(u(0),p(0))\in E$, at the next time moment $t$, we will
 have $\Dt u(t)\in L^2(\Omega)$ and $\Nx u(t)\in L^2(\Omega)$. This regularization allows us,
 similarly to the case of strongly damped wave equations, to truncate system \eqref{bf1} and
  reduce the analysis to simpler equations:
  \begin{equation}\label{0.trunc}
  \Dt p+\divv(Du)=0,\ \ p\big|_{t=0}=p_0,\ \ -\Dx u+\Nx p+f(u)=g(t),
  \end{equation}
where $g\in L^\infty(\R_+,L^2(\Omega))$ is new given external force (of course, the relation of
this system to the initial equations \eqref{bf1} is given by $g(t):=g-\Dt u(t)$).
\par
The detailed analysis of this truncated system is presented in \S\ref{s2}. In particular, we
 prove there that this system is well-posed and dissipative in higher energy space
 $p\in \bar L^2(\Omega)\cap H^1(\Omega)$ and also establish the exponential smoothing
  property for this system, namely, we check that the ball in the space $H^1(\Omega)$
  attracts exponentially fast the trajectories $p(t)$ of \eqref{0.trunc} starting from
   bounded sets of $\bar L^2(\Omega)$. Returning back to the full system \eqref{bf1}, we
   establish after that its well-posedness and dissipativity in higher energy space $E^1$ as well as the fact that
   the proper ball in $E^1$ is an exponentially attracting set for the solutions of \eqref{bf1} starting from $E$. This
    fact, in turn, is crucial for our study of global  and exponential attractors.
\par
     Note also that the analysis
     presented in this section is heavily based on the study of linear problem \eqref{0.trunc} (which
     corresponds to $f=0$) presented in Appendix \ref{s4} and, in particular, on the dissipativity
     of this linear problem in higher energy space $H^1(\Omega)$. This dissipativity is proved using the
     localization technique and is of independent interest.
     \par
    In \S\ref{s3} we verify the existence of a global and  exponential attractors for
    the solution semigroup associated with problem \eqref{bf1}. These results are more or less
    standard corollaries of the asymptotic regularity and exponential attraction proved in \S\ref{s2},
    see \cite{BV,cv,EFNT,EMZ05,MZ,Tem} for more details.
\par
    Finally, in \S\ref{s7}, we also consider briefly some generalizations
    of the proved results, including  the case of the extra convective terms in  the initial
     Brink\-man-Forch\-hei\-mer equation
     and discuss some open problems for further research.

\section{Well-posedness, dissipativity and partial smoothing}\label{s1}

In this section, we verify the global well-posedness and dissipativity of slightly compressible
 Brinkman-Forchheimer equations:
\begin{equation}\label{1.main}
\begin{cases}
\Dt u-\Dx u+\Nx p+f(u)=g,\ \ u\big|_{\partial\Omega}=0, \ \ u\big|_{t=0}=u_0,\\
 \Dt p+\divv(Du)=0,\ \  p\big|_{t=0}=p_0
\end{cases}
\end{equation}
in the energy space $(u_0,p_0)\in E$ as well as establish some partial smoothing results for the solutions of this system
 which are crucial for what follows.
We start with the basic a priori estimate in the phase space $E$.

\begin{theorem}\label{Th1.E-dis} Let $g\in L^2(\Omega)$, $D=D^*>0$ and the nonlinearity $f$
satisfy \eqref{0.fstr} and \eqref{0.f}. Let also $(u(t),p(t))$ be a sufficiently
smooth solution of \eqref{1.main}. Then, the following estimate holds:
\begin{multline}\label{1.E-dis}
\|(u,p)(t)\|_{E}^2+\int_t^{t+1}(\|\Nx u(s)\|^2_{L^2}+(|f(u(s))\cdot Du(s)|,1))\,ds \le \\\le Q(\|(u,p)(0)\|^2_{E})e^{-\alpha t}+Q(\|g\|^2_{L^2}),
\end{multline}
for some monotone function $Q$ and positive constant $\alpha$ independent on $u$ and $t$.
\end{theorem}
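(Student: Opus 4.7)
My plan is to test the first equation with $Du$. Since $u|_{\partial\Omega}=0$ forces $Du|_{\partial\Omega}=0$, the pressure coupling integrates by parts cleanly as $(\nabla p, Du) = -(p, \divv(Du))$, and the second equation converts $-\divv(Du)$ into $\partial_t p$, giving $(p,\partial_t p) = \tfrac12\tfrac{d}{dt}\|p\|^2$. A single test produces the basic energy identity
\begin{equation*}
\tfrac12\tfrac{d}{dt}\bigl(\|u\|_D^2 + \|p\|^2\bigr) + (D\nabla u, \nabla u) + (f(u)\cdot Du, 1) = (g, Du),
\end{equation*}
with $(D\nabla u, \nabla u) \ge \lambda_{\min}(D)\|\nabla u\|^2$ by positivity of $D$. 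The structural hypothesis \eqref{0.f} handles the nonlinearity: the lower bound $\varphi(z) \ge \alpha z^l - C$ gives $(f(u)\cdot Du, 1) \ge c\|u\|_{L^{2l+2}}^{2l+2} - C\|u\|_{L^2}^2$, and together with the matching upper bound $\varphi(z)\le C(1+z^l)$ shows that $(f(u)\cdot Du, 1)$ and $(|f(u)\cdot Du|, 1)$ are equivalent modulo an $L^2$-correction absorbable via Poincar\'e into $\|\nabla u\|^2$. Young's inequality handles $(g, Du)$.

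The substantive difficulty is that this identity supplies no damping for $\|p\|^2$: the pressure inherits dissipation only indirectly through its coupling with $u$. To extract it, I will invoke a Bogovskii-type right inverse $B\colon \bar L^2(\Omega) \to H^1_0(\Omega)$ of the divergence, characterized by $\divv(Bp)=p$, $Bp|_{\partial\Omega}=0$, $\|Bp\|_{H^1_0} \le C\|p\|_{L^2}$, and test the first equation with $Bp$. The key cancellation $(\nabla p, Bp) = -\|p\|^2$ is immediate, and using $\partial_t Bp = -B\divv(Du)$ from the second equation one obtains
\begin{equation*}
\tfrac{d}{dt}(u, Bp) + \|p\|^2 = -(\nabla u, \nabla Bp) - (f(u), Bp) + (g, Bp) - (u, B\divv(Du)).
\end{equation*}
Every right-hand term is controlled by the basic dissipation $\|\nabla u\|^2 + (|f(u)\cdot Du|,1) + \|g\|^2$: the viscous and source pieces by the $H^1_0$-boundedness of $B$, the transport piece by $\|B\divv(Du)\|_{L^2} \le C\|\nabla u\|$, and the nonlinear piece by Sobolev $H^1 \hookrightarrow L^6$ in 3D together with the growth restriction $l \le 2$, which is exactly what closes this step. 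The modified Lyapunov $\Phi := \|u\|_D^2 + \|p\|^2 + \delta(u, Bp)$ with small $\delta > 0$ is equivalent to $\|u\|_{L^2}^2 + \|p\|^2$ and satisfies $\tfrac{d}{dt}\Phi + \alpha\Phi + c(\|\nabla u\|^2 + (|f(u)\cdot Du|,1)) \le C\|g\|^2$. Gronwall produces the exponential decay and integration over $[t, t+1]$ produces the integral part of \eqref{1.E-dis}.

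The $E$-norm also asks for pointwise $\|\nabla u(t)\|^2$, which the above provides only in an $L^1_t$ sense. To promote it, I will test the first equation additionally with $D\partial_t u$: the cross term becomes $(\nabla p, D\partial_t u) = (p, \partial_t^2 p)$ via the second equation, and after a time integration by parts one obtains a differential inequality $\tfrac{d}{dt}\mathcal F + c\|\partial_t u\|^2 \le C(\|\nabla u\|^2 + \|p\|^2 + \|g\|^2)$ for an energy $\mathcal F \sim (\nabla u, D\nabla u) + \mathrm{l.o.t.}$ Combined with the integral bound on $\|\nabla u\|^2$ just proved, a uniform Gronwall argument upgrades this to the required pointwise $\|\nabla u(t)\|^2$ estimate. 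The crux throughout is the pressure-dissipation trick of the middle paragraph: the Dirichlet boundary condition makes the use of an $H^1_0$-valued right inverse of the divergence essential for clean integration by parts, and the 3D sub-critical growth $l \le 2$ is precisely what keeps the nonlinear error term $(f(u), Bp)$ absorbable by the existing dissipation.
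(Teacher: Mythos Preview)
Your overall architecture---energy identity via testing with $Du$, then recovery of pressure dissipation via a Bogovskii right inverse $B$---matches the paper exactly. The gap is in your middle paragraph, where you assert that the combined inequality closes as the \emph{linear} relation $\tfrac{d}{dt}\Phi + \alpha\Phi + c(\|\nabla u\|^2 + (|f(u)\cdot Du|,1)) \le C\|g\|^2$. It does not. After estimating $\delta\,|(f(u),Bp)| \le C\delta\|f(u)\|_{L^{6/5}}\|p\|_{\bar L^2}$ and using $\|f(u)\|_{L^{6/5}}^{6/5} \le C\bigl((|f(u)\cdot Du|,1)+1\bigr)$ (this is precisely where $l\le 2$ enters), Young's inequality with exponents $6/5$ and $6$ absorbs the first factor into the available dissipation $(|f(u)\cdot Du|,1)$ but leaves an unavoidable residual $C\delta^6\|p\|_{\bar L^2}^6 \sim C\delta^6\Phi^3$ on the right-hand side. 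There is no dissipation proportional to $\|p\|^6$ to swallow it, and you cannot bound $\|p\|$ a priori---that is exactly what you are trying to prove. Your claim that $(f(u),Bp)$ is ``controlled by the basic dissipation $\|\nabla u\|^2 + (|f(u)\cdot Du|,1) + \|g\|^2$'' is therefore false as stated.

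The paper confronts this honestly: it derives the \emph{nonlinear} differential inequality
\[
\frac{d}{dt}\mathcal E_\eb + \eb\,\mathcal E_\eb \le C\eb^6\,\mathcal E_\eb^3 + C(\|g\|_{L^2}^2+1)
\]
and then invokes a Gronwall lemma \emph{with a parameter} (Gatti--Pata--Zelik / Pata), which exploits the freedom in choosing $\eb$ small to extract a genuinely dissipative estimate despite the cubic term. This is the real analytic content of the proof and is what you are missing; a plain Gronwall applied to the inequality you would actually obtain gives only local-in-time bounds, not exponential decay. Your third paragraph (testing with $D\partial_t u$) is not part of the paper's proof of this theorem at all---the paper's $\mathcal E_\eb$ controls only $\|u\|_{L^2}^2+\|p\|_{\bar L^2}^2$ pointwise, with $\|\nabla u\|^2$ appearing only under the time integral, and the pointwise $H^1$ control of $u$ is deferred to the subsequent smoothing theorem.
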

\begin{proof} We multiply the first equation of \eqref{1.main} by $Du$ and integrate over $\Om$. Then,
integrating by parts and using the second equation, we arrive at
\begin{equation}\label{1.eq}
\frac12\frac d{dt}\(\|u\|^2_{L^2_D}+\|p\|^2_{\bar L^2}\)+\|\Nx u\|^2_{L^2_D}+(f(u),Du)=(g,Du),
\end{equation}
where $\|u\|^2_{L^2_D}:=\int_\Omega Du(x).u(x)\,dx$. Here and below $\xi.\eta$ stands for
the standard dot product of vectors $\xi,\eta\in\R^3$.
\par
This energy identity is still not enough to get the dissipative estimate since it does not contain the term
 $\|p\|^2_{L^2}$ without time differentiation. To get this term we use the so-called Bogovski operator:
\begin{equation}
 \mathfrak B:\bar L^2(\Omega)\to H^1_0(\Omega),\
 \bar L^2(\Omega):=\{p\in L^2(\Omega),\ \<p\>=0\},\ \ \divv \mathfrak Bp=p.
\end{equation}
It is well-know that such an operator exists as a linear continuous operator if $\Omega$
is smooth enough, see e.g.,\cite{S}. Multiplying the first equation of \eqref{1.main} by $\mathfrak Bp$,
integrating with respect to  $x$ and using the second equation, we get
\begin{multline}\label{1.eq1}
\frac d{dt}(u,\mathfrak Bp)+\|p\|^2_{\bar L^2}=-(p,\mathfrak B\divv(Du))-\\-
(\Nx u,\Nx \mathfrak Bp)-(f(u),\mathfrak Bp)+(g,\mathfrak Bp).
\end{multline}
Multiplying \eqref{1.eq1} by a small $\eb>0$ and taking a sum with equation \eqref{1.eq}, after using
 the H\"older inequality and Sobolev embedding $H^1\subset L^6$, we  get
 \begin{multline}
\frac d{dt}\(\|u\|^2_{L^2_D}+\|p\|^2_{\bar L^2}+2\eb(u,\mathfrak Bp)\)+\\+
\|\Nx u\|^2_{L^2_D}+\eb\|p\|^2_{\bar L^2}+(f(u),Du)
\le\eb\|f(u)\|_{L^{6/5}}\|p\|_{\bar L^2}+C(\|g\|^2_{L^2}+1).
 \end{multline}
 Using our assumptions \eqref{0.f}  on functions $f$ and $\varphi$, it is not difficult to verify that
 \begin{equation}
|f(u)|^{6/5}\le C(|f(u).Du|+1).
 \end{equation}
 This gives us the following differential inequality:
 \begin{equation}\label{1.Gp}
\frac d{dt}\Cal E_\eb(u,p)+\eb \Cal E_\eb(u,p)\le C\eb^6\Cal E_{\eb}^3(u,p)+C(\|g\|^2_{L^2}+1),
 \end{equation}
 where
\begin{equation}
 \Cal E_\eb(u,p):=\|u\|^2_{L^2_D}+\|p\|^2_{\bar L^2}+2\eb(u,\mathfrak Bp).
\end{equation}
Moreover, for sufficiently small $\eb>0$, we have
$$
\frac12\|(u,p)\|_{E}^2\le\Cal E_\eb(u,p)\le\frac32\|(u,p)\|^2_{E}.
$$
Thus, applying the Gronwall lemma with a parameter to \eqref{1.Gp}, see \cite{GPZ,Pat} and also
 \cite{Z07} for details, we end up with
 the desired estimate \eqref{1.E-dis} and finishes the proof of the theorem.
\end{proof}
At the next step we define a weak energy solution for problem \eqref{1.main}.

\begin{definition}\label{Def1.sol} A pair of function $(u,p)\in C_w(0,T;E)$ is a weak energy
 solution of problem \eqref{1.main} if, in addition,
 \begin{equation}
u\in L^2(0,T;H^1_0(\Omega))\cap L^{2(l+2)}(0,T;L^{2(l+2)}(\Omega))
 \end{equation}
 and the equations \eqref{1.main} are satisfied in the sense of distributions.
\end{definition}
\begin{remark}\label{Rem1.cont} From the first equation \eqref{1.main}, we see that
\begin{multline}
\Dt u\in L^2(0,T;H^{-1}(\Omega))+L^{l'}(0,T;L^{l'}(\Omega))=\\=
[L^2(0,T;H^1_0(\Omega))\cap L^{2(l+1)}(0,T;L^{2(l+1)}(\Omega))]^*,
\end{multline}
where $\frac1{l'}+\frac1{2(l+1)}=1$.
Thus, multiplication of the first equation of \eqref{1.main} by $u$ or $\mathfrak Bp$ is
 justified on the level of weak
 energy solutions and we have, in addition,
that $u\in C(0,T;L^2(\Omega))$, see e.g., \cite{cv} for the details. The situation with
 $p$-component is even simpler
 since we have
 $$
 \Dt p\in L^2(0,T;\bar L^2(\Omega))
 $$
 and multiplication on $p$ is allowed. So, we also have that $p\in C(0,T;\bar L^2(\Omega))$.
 In particular, all manipulations done for the derivation of the key estimate \eqref{1.Gp} are
  actually justified for weak energy solutions, so all such solutions satisfy the dissipative
   estimate \eqref{1.E-dis}.
\end{remark}
We now turn to the uniqueness.
\begin{theorem}\label{Th1.un} Suppose that  the  assumptions of Theorem \ref{Th1.E-dis} are satisfied and let $(u_1(t),p_1(t))$
and $(u_2(t),p_2(t))$ be two weak energy
solutions of problem \eqref{1.main}. Then, the following estimate holds:
\begin{multline}\label{1.lip}
\|(u_1(t)-u_2(t),p_1(t)-p_2(t))\|_E^2\le\\\le Ce^{Kt}\|(u_1(0)-u_2(0),p_1(0)-p_2(0))\|_E^2,
\end{multline}
where the constants $C$ and $K$ depend only on $f$ and $D$.
\end{theorem}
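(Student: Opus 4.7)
The plan is to subtract the two systems, test the velocity-difference equation with $D\bar u$ (the same multiplier used in Theorem~\ref{Th1.E-dis}), exploit the pressure-difference equation to eliminate the pressure gradient, and then close by Gronwall. Writing $(\bar u,\bar p):=(u_1-u_2,p_1-p_2)$, subtraction of \eqref{1.main} yields
\[
\Dt\bar u-\Dx\bar u+\Nx\bar p+f(u_1)-f(u_2)=0,\qquad \Dt\bar p+\divv(D\bar u)=0,
\]
together with $\bar u|_{\p\Om}=0$. Since $D$ is a constant symmetric positive matrix and $\bar u\in L^2(0,T;H^1_0)$, the function $D\bar u$ is an admissible test function for the first equation. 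Integration by parts in the viscous term, together with the identity
\[
(\Nx\bar p,D\bar u)=-(\bar p,\divv(D\bar u))=(\bar p,\Dt\bar p)=\tfrac12\tfrac{d}{dt}\|\bar p\|^2_{L^2},
\]
read off from the pressure equation, produces the energy identity
\[
\tfrac12\tfrac{d}{dt}\Cal E(t)+\|\Nx\bar u\|^2_{L^2_D}+(f(u_1)-f(u_2),D\bar u)=0,\qquad \Cal E(t):=\|\bar u\|^2_{L^2_D}+\|\bar p\|^2_{L^2}.
\]

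The core step is the control of the nonlinear term. Using the structural form $f(u)=\varphi(|u|^2)u$ together with the upper bounds in \eqref{0.f} on $\varphi$ and $\varphi'$, the mean value theorem gives the pointwise estimate $|f(u_1)-f(u_2)|\le C(1+|u_1|^{2l}+|u_2|^{2l})|\bar u|$ with $l\in(0,2]$. H\"older's inequality (with exponents $3/2$ and $3$) combined with the three-dimensional Sobolev embedding $H^1\hookrightarrow L^6$ then yields
\[
|(f(u_1)-f(u_2),D\bar u)|\le C\|\bar u\|^2_{L^2}+C(\|u_1\|^4_{H^1}+\|u_2\|^4_{H^1})\|\Nx\bar u\|^2_{L^2}.
\]
By the dissipative estimate \eqref{1.E-dis}, $\|u_i(t)\|_{H^1}\le M$ uniformly in $t\ge 0$, with $M$ controlled by the initial $E$-norms, $\|g\|_{L^2}$, and the structural constants of $f$ and $D$. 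Combining this with the coercivity $\|\Nx\bar u\|^2_{L^2_D}\ge d_-\|\Nx\bar u\|^2_{L^2}$ (with $d_->0$ the smallest eigenvalue of $D$) turns the energy identity into a Gronwall-type differential inequality for $\Cal E$, whose integration yields \eqref{1.lip}.

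The genuine obstacle is the \emph{critical} growth of $f$ in three dimensions: when $l=2$, the integral $\int|u_i|^4|\bar u|^2\,dx$ is dominated only by $\|\Nx\bar u\|^2_{L^2}$ through the sharp embedding $H^1\hookrightarrow L^6$ (with no lower-order remainder), so direct absorption into the viscous dissipation $\|\Nx\bar u\|^2_{L^2_D}$ is available only when the dissipative bound $M$ is small. For general $M$ one either exploits the higher space-time summability $u_i\in L^{2(l+1)}_{\mathrm{loc}}(L^{2(l+1)})$ furnished by the integral part of \eqref{1.E-dis} to produce a time-integrable Gronwall multiplier, or accepts that the resulting constants $C$ and $K$ in \eqref{1.lip} depend on the dissipative bound $M$, and hence on the $E$-norms of the initial data and on $\|g\|_{L^2}$, beyond the structural constants of $f$ and $D$.
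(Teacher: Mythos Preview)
Your argument does not close in the critical case $l=2$, and you essentially acknowledge this in the final paragraph: the constants you obtain depend on the dissipative bound $M$, hence on the $E$-norms of the initial data and on $\|g\|_{L^2}$, whereas the theorem asserts that $C$ and $K$ depend \emph{only} on $f$ and $D$. The difficulty is structural: by testing with $D\bar u$ you cancel the pressure cleanly, but you destroy the monotonicity of the nonlinearity, since $(f(u_1)-f(u_2),D\bar u)$ has no sign for general positive $D$. You are then forced into the growth-rate estimate $|f(u_1)-f(u_2)|\le C(1+|u_1|^4+|u_2|^4)|\bar u|$, which at $l=2$ is exactly critical for the $H^1\hookrightarrow L^6$ embedding and produces a term $CM^4\|\Nx\bar u\|^2_{L^2}$ that cannot be absorbed into $d_-\|\Nx\bar u\|^2_{L^2}$ unless $M$ is small. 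The alternative you suggest (using the $L^{2(l+1)}_{t,x}$ integrability to manufacture an $L^1_t$ Gronwall coefficient) does not work either at $l=2$: you would need $\|u_i\|_{L^6}^4\in L^1_t$, but the available bounds give only $\|u_i\|_{L^6}^6\in L^1_t$ and $\|u_i\|_{L^6}\in L^\infty_t$.

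The paper avoids this entirely by exploiting the \emph{lower} bound in \eqref{0.f}: the structure $f(u)=\varphi(|u|^2)u$ together with assumption \eqref{0.f} yields $f'(u)\ge -L$ in the matrix sense, with $L$ depending only on $f$. Testing the difference equation with $\bar u$ (not $D\bar u$) then gives $(f(u_1)-f(u_2),\bar u)\ge -L\|\bar u\|^2_{L^2}$ directly, with no reference to the size of $u_i$. The price is that the pressure term no longer cancels; the paper handles it by integrating the second equation to write $\bar p(t)=\bar p(0)-\int_0^t\divv(D\bar u(s))\,ds$, inserting this into $(\Nx\bar p,\bar u)=-(\bar p,\divv\bar u)$, and estimating the resulting time-integral cross term on a short interval $[0,T]$ (then iterating). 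This is the missing idea: trade the clean pressure cancellation for the monotonicity of $f$, and pay for the pressure with a small-time argument.
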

\begin{proof} We first note that it suffices to verify \eqref{1.lip} for $t\le T$ for some small,
but positive $T$. Then, to get the general estimate, it will be enough to iterate \eqref{1.main}.
Let $\bar u(t):=u_1(t)-u_2(t)$ and $\bar p(t):=p_1(t)-p_2(t)$. Then, these functions solve
\begin{equation}\label{1.dif}
\Dt\bar u-\Dx\bar u+\Nx\bar p+[f(u_1)-f(u_2)]=0,\ \ \Dt\bar p+\divv(D\bar u)=0.
\end{equation}
Integrating the second equation, we get
\begin{equation}\label{1.p}
\bar p(t)=\bar p(0)-\int_0^t\divv(D\bar u(s))\,ds.
\end{equation}
Multiplying now the first equation by $\bar u(t)$ and using that,
due to assumptions \eqref{0.fstr} and \eqref{0.f}, $f'(u)\ge-L$, after the standard
 transformations, we end up with
 \begin{multline}
\frac d{dt}\|\bar u(t)\|^2_{L^2}+\|\Nx\bar  u\|^2_{L^2}+\\+2\(\int_0^t\divv(D\bar u(s))\,ds,\divv\bar u(t)\)\le
 C\|\bar p(0)\|^2_{\bar L^2}+2L\|\bar u(t)\|^2_{L^2}.
 \end{multline}
Assuming that $T$ is small enough, we estimate
\begin{multline}
\Big|2\(\int_0^t\divv(D\bar u(s))\,ds,\divv\bar u(t)\)\Big|\le \\
\le \int_0^t\|\divv(D\bar u(s))\|^2_{L^2}\,ds+
T\|\divv(\bar u(t))\|^2_{L^2}\le \\\le C\int_0^t\|\Nx\bar u(s)\|^2_{L^2}\,ds+\frac12\|\Nx\bar u(t)\|^2_{L^2}
\end{multline}
and, therefore,
 \begin{multline}
\frac d{dt}\|\bar u(t)\|^2_{L^2}+\frac12\|\Nx\bar  u\|^2_{L^2}-\\-C\int_0^t\|\Nx\bar u(s)\|^2_{L^2}\,ds\le
 C\|\bar p(0)\|^2_{\bar L^2}+2L\|\bar u(t)\|^2_{L^2}.
 \end{multline}
Integrating this inequality in time, we end up with
\begin{multline}
\|\bar u(t)\|^2_{L^2}+\int_0^t\(\frac12-C(t-s)\)\|\Nx \bar u(s)\|^2_{L^2}\,ds \le
\\
\le
 C\|(\bar u(0),\bar p(0))\|^2_{E}+2L\int_0^t\|\bar u(s)\|^2_{L^2}\,ds.
\end{multline}
Fixing now $T$ small enough that the integral in the left-hand side is positive and applying the
 Gronwall inequality,
we finally arrive at
\begin{equation}
\|\bar u(t)\|^2_{L^2}+\int_0^t\|\Nx\bar u(s)\|^2_{L^2}\,ds\le K\|(\bar u(0),\bar p(0))\|^2_{L^2},\ \ t\le T.
\end{equation}
The corresponding estimate for the $p$-component follows now from \eqref{1.p}. Thus, the
estimate \eqref{1.lip} is verified and the theorem is proved.
\end{proof}
\begin{corollary}\label{Cor1.sem} Let the assumptions of Theorem \ref{Th1.E-dis} hold.
 Then, equations \eqref{1.main} generate a dissipative
globally Lipschitz continuous semigroup $S(t)$ in the phase space $E$:
\begin{equation}\label{1.sem}
S(t)(u_0,p_0):=(u(t),p(t)),
\end{equation}
where $(u(t),p(t))$ is a unique  energy solution of \eqref{1.main}
 with the initial data $(u_0,p_0)\in E$.
\end{corollary}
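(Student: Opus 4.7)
The plan is to assemble the corollary from the pieces already established: existence of weak energy solutions (which still has to be produced), the uniqueness/Lipschitz estimate of Theorem \ref{Th1.un}, and the dissipative bound of Theorem \ref{Th1.E-dis}. Since the last two are already proved, the only substantive task is existence, after which defining the semigroup and verifying its required properties is essentially bookkeeping.

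For existence, I would proceed by a standard Galerkin approximation. Take $\{e_k\}$ to be the eigenfunctions of the Dirichlet Laplacian on $\Omega$ (vector-valued) and $\{q_k\}$ an orthonormal basis of $\bar L^2(\Omega)$ (for instance Neumann Laplacian eigenfunctions with zero mean) and seek $(u_N,p_N)$ in the span of the first $N$ modes solving the projected system. The corresponding ODE is locally solvable, and the a priori estimate \eqref{1.E-dis}, together with the bound on $\int\!|f(u)\cdot Du|$ it provides (which, by \eqref{0.f}, controls $u$ in $L^{2(l+2)}$), gives a uniform bound on $(u_N,p_N)$ in $L^\infty(0,T;E)$, on $u_N$ in $L^2(0,T;H^1_0)\cap L^{2(l+2)}(0,T;L^{2(l+2)})$, and—via the equations themselves—on $\Dt u_N$ in the dual space mentioned in Remark \ref{Rem1.cont} and on $\Dt p_N$ in $L^2(0,T;\bar L^2)$. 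An Aubin--Lions-type compactness argument then yields a subsequence converging strongly enough in $u$ to pass to the limit in the nonlinear term $f(u_N)$, producing a weak energy solution in the sense of Definition \ref{Def1.sol}. The dissipative estimate \eqref{1.E-dis} is preserved in the limit by lower semicontinuity.

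Once existence is in hand, uniqueness is given by Theorem \ref{Th1.un}, so the map
\[
S(t)(u_0,p_0):=(u(t),p(t))
\]
is well-defined on $E$. The semigroup identity $S(t+s)=S(t)S(s)$ follows from uniqueness applied to the solution restarted from $(u(s),p(s))$ at time $s$, using Remark \ref{Rem1.cont} which guarantees $(u,p)\in C([0,T];E)$ so that the restart datum lies in $E$. Dissipativity in $E$ is a direct restatement of \eqref{1.E-dis}, and the global Lipschitz continuity on bounded sets (uniformly on bounded time intervals) is precisely estimate \eqref{1.lip}.

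The only place where something nontrivial happens is in justifying the passage to the limit in $f(u_N)$, because $f$ has polynomial growth of order $2l+1$ with $l\in(0,2]$. The main obstacle is therefore to combine strong convergence of $u_N$ in $L^2(0,T;L^2)$ (from Aubin--Lions) with the uniform $L^{2(l+2)}$-bound and a Vitali/interpolation argument to get convergence of $f(u_N)$ in $L^{l'}(0,T;L^{l'})$; the conjugate exponent $l'$ appearing in Remark \ref{Rem1.cont} is exactly what makes this work. All remaining ingredients, including continuity of $(u(t),p(t))$ with values in $E$ needed for the semigroup identity, are standard consequences of the regularity of $\Dt u$ and $\Dt p$ noted in that remark.
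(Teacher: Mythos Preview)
Your proposal is correct and assembles the pieces exactly as intended: Theorems \ref{Th1.E-dis} and \ref{Th1.un} supply dissipativity and the Lipschitz estimate, so only existence needs work, and once that is in hand the semigroup properties follow as you describe.

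The one genuine methodological difference is in how existence is produced. You run a direct Galerkin scheme on \eqref{1.main} itself, using Dirichlet Laplacian eigenfunctions for $u$ and a zero-mean basis for $p$, and pass to the limit with Aubin--Lions and a Vitali-type argument for $f(u_N)$. The paper instead introduces a vanishing-viscosity regularisation, adding $\nu\Dx p$ (with Neumann condition $\partial_n p=0$) to the pressure equation so that the approximating system becomes fully parabolic; Galerkin is then applied to this regularised problem, and one lets $\nu\to0$ using the analogue of \eqref{1.eq} for uniform (non-dissipative, finite-time) bounds. Your route is more direct and avoids the second limit; the paper's route has the mild advantage that the parabolic regularisation sidesteps any concern about compatibility of the Galerkin basis with the matrix $D$ in the energy cancellation $(\Nx p_N,Du_N)+(\divv(Du_N),p_N)=0$, though with your choice of basis (scalar Laplacian eigenfunctions tensored with all three coordinate directions) $Du_N$ stays in the velocity span and the cancellation goes through anyway. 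Either approach is standard and both are omitted in detail in the paper.
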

\begin{proof} According to theorems \ref{Th1.E-dis} and \ref{Th1.un}, we only need to verify the
 existence of a weak solution. This can be done in many standard ways, one of the is to use vanishing
  viscosity method. Namely, we may approximate \eqref{1.main} by a family of parabolic equations:
  $$
  \Dt u-\Dx u+\Nx p=g,\ \Dt p+\divv(Du)=\nu\Dx p,\ u\big|_{\partial\Omega}=0,\
   \partial_np\big|_{\partial\Omega}=0,
  $$
  where $\nu>0$ is a small parameter. The solution of this parabolic problem can be obtained using e.g.,
  the Galerkin approximations and the passage to the limit as $\nu\to0$ is also straightforward since
   the analogue of \eqref{1.eq} gives the necessary uniform with respect to $\nu\to0$ estimates (although they are non-dissipative,
   this is not important for the existence of a solution on a finite time interval).
    So, we omit the details here.
\end{proof}
By the analogy with strongly damped wave equation (see \cite{PV06,KZ09} and references therein),
one may expect that \eqref{1.main} partially possesses
instantaneous smoothing property. The next results shows
that such a smoothing indeed holds.

\begin{theorem}\label{Th1.sm-ins} Let the  assumptions of Theorem \ref{Th1.E-dis} hold and let $(u,p)$ be a weak
 energy solution of  problem \eqref{1.main}.
Then the following partial smoothing property holds:
\begin{multline}\label{1.sm}
t\|\Nx u(t)\|^2_{L^2}+t^2\|\Dt u(t)\|^2_{L^2}+t\|\Dt p(t)\|^2_{L^2}+\\+
\int_0^{t}s^2\|\Nx\Dt u(s)\|^2_{L^2}\,ds\le C(1+\|g\|^2_{L^2}+\|(u(0),p(0))\|^2_{E}),
\end{multline}
where $t\in[0,1]$ and the constant $C$ is independent of $t$ and $u$.
\end{theorem}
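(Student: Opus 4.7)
The bound on $t\|\Dt p(t)\|^2$ is automatic from the second equation of \eqref{1.main}: $\Dt p=-\divv(Du)$ gives $\|\Dt p(t)\|\le C\|\Nx u(t)\|$, so it reduces to the $\Nx u$-bound. The remaining estimates split into two stages, separated by differentiation in time. First (Stage~1), I will establish the intermediate bound $t\|\Nx u(t)\|^2+\int_0^t s\|\Dt u(s)\|^2\,ds\le C$ by testing \eqref{1.main} with $t\Dt u$. Second (Stage~2), I will differentiate \eqref{1.main} in $t$ and apply an analogous weighted-energy argument to the pair $(v,q):=(\Dt u,\Dt p)$ in order to obtain $t^2\|\Dt u(t)\|^2+\int_0^t s^2\|\Nx\Dt u\|^2\,ds\le C$, using the Stage~1 bound to control the resulting right-hand side.

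\textbf{Stage 1.} Multiplying the first equation of \eqref{1.main} by $t\Dt u$ and integrating over $\Omega$, the nonlinearity becomes a perfect time derivative $(tf(u),\Dt u)=t\tfrac{d}{dt}\int_\Omega F(u)\,dx$ with $F(u):=\tfrac12\int_0^{|u|^2}\varphi(s)\,ds$ (since $f(u)=\varphi(|u|^2)u=\Nx_u F(u)$). The pressure term is the core difficulty; I rewrite it via integration by parts in space and time together with the second equation of \eqref{1.main}:
\[
t(\Nx p,\Dt u)=-\tfrac{d}{dt}\bigl(t(p,\divv u)\bigr)+(p,\divv u)-t(\divv(Du),\divv u).
\]
Using $t\tfrac{d}{dt}A=\tfrac{d}{dt}(tA)-A$ to process the remaining $t$-weighted derivatives, collecting total time derivatives on the left and integrating from $0$ to $t\in[0,1]$, the boundary piece $t(p(t),\divv u(t))$ is absorbed by Young's inequality into $\tfrac14 t\|\Nx u(t)\|^2+C\|p(t)\|^2$, and similarly $s(g,\Dt u)$ into $\tfrac12 s\|\Dt u\|^2+\tfrac12 s\|g\|^2$. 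All remaining integrands on the right are $\|\Nx u\|^2$, $\|p\|^2$, $|f(u)\cdot Du|$ and $\|g\|^2$, whose time integrals are controlled by Theorem~\ref{Th1.E-dis}; this produces the Stage~1 bound.

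\textbf{Stage 2.} The pair $(v,q)$ solves $\Dt v-\Dx v+\Nx q+f'(u)v=0$ and $\Dt q+\divv(Dv)=0$. Testing the first with $v$, the second with $q$, and summing, the cross-pressure terms are bounded by $|(\Nx q,v)|+|(\divv(Dv),q)|\le 2\eb\|\Nx v\|^2+C_\eb\|q\|^2$ and absorbed into the dissipation. The critical step is the lower bound $(f'(u)v,v)\ge -C\|v\|^2-\eb\|\Nx v\|^2$: from $f'(u)=\varphi(|u|^2)I+2\varphi'(|u|^2)u\otimes u$ and the bounds in \eqref{0.f}, after discarding the non-negative contributions $\alpha\int|u|^{2l}|v|^2$ and $2K(u\cdot v)^2$, the only negative part is of order $-C\int_\Omega|u||v|^2\,dx$, which is handled by the 3D Gagliardo--Nirenberg inequality
\[
\int_\Omega|u||v|^2\,dx\le\|u\|_{L^2}\|v\|_{L^4}^2\le C\|u\|_{L^2}\|v\|_{L^2}^{1/2}\|\Nx v\|_{L^2}^{3/2}\le\eb\|\Nx v\|^2+C_\eb\|v\|^2,
\]
exploiting the uniform $L^2$-bound on $u$ from Theorem~\ref{Th1.E-dis}. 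The resulting ODE inequality $\tfrac{d}{dt}(\|v\|^2+\|q\|^2)+\|\Nx v\|^2\le C(\|v\|^2+\|q\|^2)$, multiplied by $t^2$ and integrated, gives $t^2(\|v\|^2+\|q\|^2)(t)+\int_0^t s^2\|\Nx v\|^2\,ds\le C\int_0^t s(\|v\|^2+\|q\|^2)\,ds$, and the latter is finite thanks to Stage~1 together with $\|q\|\le C\|\Nx u\|$.

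\textbf{Main obstacle.} The delicate point is the Stage~2 control of $(f'(u)v,v)$. The singular lower bound $\varphi'(z)\ge K-Cz^{-1/2}$ generates the term $-C\int|u||v|^2\,dx$, which must be absorbed into $\|\Nx v\|^2$ \emph{without} any singular time factors; otherwise the weight $t^2$ is insufficient to close the estimate. The Gagliardo--Nirenberg inequality combined with the uniform $L^2$-bound on $u$ provides exactly the right balance. By contrast, a naive bound such as $|(f'(u)v,v)|\le C(1+\|u\|_{H^1}^4)\|\Nx v\|^2$ used in combination with the Stage~1 decay $\|u\|_{H^1}^2\le C/t$ would produce a coefficient $\sim 1/t^2$ that no polynomial weight can absorb. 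All the formal manipulations above will be justified by working on smooth (e.g.\ Galerkin) approximations and passing to the limit via the standard density and weak convergence arguments.
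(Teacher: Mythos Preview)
Your two-stage strategy (multiply by $t\Dt u$, then differentiate in time and run a weighted energy estimate on $(v,q)=(\Dt u,\Dt p)$) is exactly the paper's approach, and your proof is correct.

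The only noteworthy difference is that you work harder than necessary in Stage~2. Under assumptions \eqref{0.fstr}--\eqref{0.f} one has the pointwise matrix bound $f'(u)\ge -L$: the eigenvalues of $f'(u)=\varphi(|u|^2)I+2\varphi'(|u|^2)u\otimes u$ are $\varphi(|u|^2)$ and $\varphi(|u|^2)+2|u|^2\varphi'(|u|^2)$, and the latter is bounded below because $2z\varphi'(z)\ge 2Kz-2Cz^{1/2}\ge -C'$. (This is the same bound already invoked in the uniqueness proof.) With $f'(u)\ge -L$ in hand, the paper simply tests the $v$-equation alone by $t^2v$, moves $-Lt^2\|v\|^2$ to the right, and bounds the pressure cross-term $t^2(\Nx q,v)=-t^2(\Dt p,\divv v)$ directly via $\tfrac12 t^2\|\Nx v\|^2+\tfrac12 t^2\|\Dt p\|^2$, using the Stage~1 pointwise bound $t\|\Dt p(t)\|^2\le C$ to integrate the last piece. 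This avoids both the Gagliardo--Nirenberg detour and the inclusion of $\|q\|^2$ in the Stage~2 energy. Your route works, but the ``main obstacle'' you identify is not actually present.
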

\begin{proof} Let us first multiply the first equation of \eqref{1.main} by $t\Dt u$ and integrate over $\Om$. Then,
using the gradient structure of nonlinearity $f$, we arrive at
\begin{multline}\label{1.dt}
\frac d{dt}\(\frac t2\|\Nx u\|^2_{L^2}+t(F(u),1)-t(p,\divv u)\)+t\|\Dt u\|^2_{L^2}=\\=
-(p,\divv u)+\frac12\|\Nx u\|^2_{L^2}+(F(u),1)+t(\divv(Du),\divv u)+t(g,\Dt u).
\end{multline}
where $F(u):=\frac12\int_0^{|u|^2}\varphi(z)\,dz$.
Integrating this identity in time and using  the estimate \eqref{1.E-dis}, we arrive at the following
smoothing property:
\begin{multline}\label{1.sm1}
t\|\Nx u(t)\|^2_{L^2}+t\|u(t)\|_{L^{2(l+1)}}^{2(l+1)}+t\|\Dt p(t)\|^2_{L^2}+\\
+\int_0^ts\|\Dt u(s)\|^2_{L^2}
\le C\(\|(u(0),p(0))\|^2_{E}+1+\|g\|^2_{L^2}\),
\end{multline}
where $t\in[0,1]$ and $C$ is independent of $u$, $p$ and $t$.
\par
Let us now differentiate equations \eqref{1.main} in time and denote
 $v:=\Dt u$ and $q=\Dt p$. Then, we end up with the following equations
\begin{equation}\label{1.mdif}
\Dt v-\Dx v+\Nx q+f'(u)v=0,\ \ \Dt q+\divv(Dv)=0.
\end{equation}
Multiplying the first equation of \eqref{1.mdif} by $t^2 v$ and integrating over $\Om$, we get
\begin{multline}\label{1.dtv}
\frac12\frac d{dt}\left(t^2\|v(t)\|^2_{L^2}\right)+t^2\|\Nx v\|^2_{L^2}+
\\+t^2(f'(u)v,v)= t^2(\Dt p,\divv v)+
t\|\Dt u\|^2_{L^2}.
\end{multline}
Integrating this equality in time and using \eqref{1.sm1} together with the assumption $f'(u)\ge-L$,
we get the desired smoothing property in the form
\begin{multline}
t^2\|\Dt u(t)\|^2_{L^2}+\int_0^ts^2\|\Nx \Dt u(s)\|^2_{L^2}\,ds \le \\\le
 C(1+\|g\|^2_{L^2}+\|(u(0),p(0))\|_{E}^2),
\end{multline}
where $t\in[0,1]$ and finish the proof of the theorem.
\end{proof}
Combining smoothing estimate \eqref{1.sm} with the dissipative estimate
 \eqref{1.E-dis}, we get the following result.

\begin{corollary}\label{Cor1.smdt} Let the  assumptions of Theorem \ref{Th1.E-dis} hold and let $(u,p)$ be a weak
energy solution of problem \eqref{1.main}. Then, we have
 the following dissipative
 estimate for higher norms:
\begin{multline}\label{1.ins-sm}
\|\Nx u(t)\|^2_{L^2}+\|\Dt u(t)\|^2_{L^2}+\int_t^{t+1}\|\Nx\Dt u(s)\|^2_{L^2} \le \\
\le
\frac{t^2+1}{t^2}\(Q(\|(u(0),p(0))\|_E)e^{-\alpha t}+Q(\|g\|_{L^2})\),
\end{multline}
where the positive constant $\alpha$ and monotone function $Q$ are independent of $t$, $u$ and $p$.
\end{corollary}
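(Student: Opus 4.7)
The plan is to derive \eqref{1.ins-sm} by stitching together the short-time partial smoothing bound \eqref{1.sm} of Theorem \ref{Th1.sm-ins} with the global-in-time energy dissipativity \eqref{1.E-dis} of Theorem \ref{Th1.E-dis}, using the semigroup property from Corollary \ref{Cor1.sem} to shift the time origin. No new estimates are needed; this is essentially a bookkeeping corollary.

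For the pointwise bounds on $\|\Nx u(t)\|^2_{L^2}$ and $\|\Dt u(t)\|^2_{L^2}$, I would split into two ranges. For $t\in(0,1]$, isolating the first two terms on the left-hand side of \eqref{1.sm} produces bounds of the form $C(1+\|g\|^2_{L^2}+\|(u(0),p(0))\|^2_E)/t$ and $C(1+\|g\|^2_{L^2}+\|(u(0),p(0))\|^2_E)/t^2$ respectively; both are controlled by a multiple of $(t^2+1)/t^2$, and since $e^{-\alpha t}\ge e^{-\alpha}$ on $(0,1]$ one can absorb constants into $Q$ to match the stated form. For $t\ge 1$, I would restart the system at $t_0:=t-1$: applying Theorem \ref{Th1.sm-ins} to the solution with data $(u(t-1),p(t-1))$ and evaluating at shifted time $1$ gives
\[
\|\Nx u(t)\|^2_{L^2}+\|\Dt u(t)\|^2_{L^2}\le C\bigl(1+\|g\|^2_{L^2}+\|(u(t-1),p(t-1))\|^2_E\bigr),
\]
and \eqref{1.E-dis} then bounds $\|(u(t-1),p(t-1))\|^2_E$ by $Q(\|(u(0),p(0))\|_E)e^{-\alpha(t-1)}+Q(\|g\|_{L^2})$. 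The harmless factor $e^{\alpha}$ is absorbed into $Q$, while $(t^2+1)/t^2\ge 1$ is automatic on this range.

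The integral term $\int_t^{t+1}\|\Nx\Dt u(s)\|^2_{L^2}\,ds$ is handled analogously, with one small subtlety: the weight $s^2$ in \eqref{1.sm} degenerates near the left endpoint of any shifted interval, so a single shifted application controls only the right half of $[t,t+1]$. I would circumvent this by applying the smoothing bound twice with different shifts. Starting at $t_0=t-1/2$ (or at $t_0=0$ when $t<1/2$), the shifted weight $(s-t_0)^2\ge 1/4$ on $[t_0+1/2,t_0+1]$ controls the half of $[t,t+1]$ adjacent to $t$; starting at $t_0=t$ covers the other half. Summing the two bounds and invoking \eqref{1.E-dis} at each shifted initial time (producing $Q(\|(u(0),p(0))\|_E)e^{-\alpha t}+Q(\|g\|_{L^2})$ up to absorbed constants) completes the argument. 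No step constitutes a genuine obstacle; the only care required is tracking how the $t^{-2}$ factor in the small-$t$ regime arises from the $t^2$ weight in \eqref{1.sm} and how the constants $e^{\alpha t_{\mathrm{shift}}}$ produced by restarting the semigroup are absorbed into the monotone function $Q$.
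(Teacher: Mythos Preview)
Your proposal is correct and follows precisely the approach the paper indicates: the paper itself gives no detailed proof of this corollary, merely stating that it follows by combining the smoothing estimate \eqref{1.sm} with the dissipative estimate \eqref{1.E-dis}. Your sketch carefully fills in the bookkeeping the paper omits, including the semigroup restart for $t\ge 1$ and the double-shift trick to remove the weight degeneracy in the integral term; nothing further is required.
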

This estimate, in turn, allows us (analogously to the case of strongly damped
 wave equations, see \cite{PV06,KZ09}) to reduce the study of the asymptotic smoothness
  for solutions to the following truncated auxiliary problem
  \begin{equation}\label{1.trunc}
  \begin{cases}
  -\Dx u+\Nx p+f(u)=g(t),\ u\big|_{\partial\Omega}=0,\\
  \Dt p+\divv(Du)=0,\  p\big|_{t=0}=p_0,
\end{cases}
  \end{equation}
where the external force $g(t)=g-\Dt u(t)$ satisfies the estimate
\begin{equation}\label{1.g}
\|g\|_{L^\infty(\R_+,L^2(\Omega))}\le C
\end{equation}
which will be studied in the next sections. We also mention here that, in order to restore the
 $u$-component of a solution $(p,u)$ of this problem in a unique way by the $p$-component,
 we need to assume in addition that
 \begin{equation}\label{1.fmon}
 f'(u)\ge0.
 \end{equation}
This assumption however, is not restrictive since, in a general case, the extra term $Lu$ can be added to
the nonlinearity and also to the external force $g(t)$ and the $L^2(\Omega)$-norm of this term
 is under the control.

\section{Asymptotic regularity}\label{s2}

In this section, we study the asymptotic smoothing for the truncated system \eqref{1.trunc} which is also
of independent interest. We will mainly
 concentrate here on the case of critical quintic growth rate of the nonlinearity ($f(u)\sim u^5$).
 The subcritical case is essentially simpler since the standard linear splitting of the solution
  semigroup on a contracting and compact components works. In contrast to this, we need a {\it nonlinear}
  splitting in the critical case. Moreover, due to specific structure of our problem, we need a
  combination of different decompositions.  We start with the following splitting
  $$
  p=q+r, \ \ u=v+w,
  $$
  where
  \begin{equation}\label{3.comp}
  \begin{cases}
  \Dt q+\divv(Dv)=0,\ \ q\big|_{t=0}=p\big|_{t=0},\\
   -\Dx v+\Nx q+f(v)+Lv=0,\ \ v\big|_{\partial\Omega}=0
\end{cases}
  \end{equation}
  and
  \begin{equation}\label{3.contr}
\begin{cases}
\Dt r+\divv(Dr)=0,\ \ r\big|_{t=0}=0,\\
-\Dx w+\Nx r+[f(u)-f(v)]=Lv+g(t),\ \ w\big|_{\partial\Omega}=0.
\end{cases}
  \end{equation}
According to the results of previous section, we may assume without loss of generality that
 $p(0)$ belongs to the absorbing ball in $\bar L^2(\Omega)$. Then, from the analogues of dissipative
 estimates for equation \eqref{3.comp}, we conclude that
 \begin{equation}\label{3.R}
\|p(t)\|_{L^2}+\|q(t)\|_{\bar L^2}+\|r(t)\|_{\bar L^2}+\|u(t)\|_{H^1}+\|v(t)\|_{H^1}+\|w(t)\|_{H^1}\le R
 \end{equation}
 for all $t\ge0$. We start with the contracting part $(q,v)$.

 \begin{proposition}\label{Prop3.contr} Let the function $f$ satisfy \eqref{0.f},
  \eqref{0.fstr} and \eqref{1.fmon}, $D=D^*>0$ and estimates \eqref{3.R} and \eqref{1.g} hold.
   Then, there exists $L=L(R)$
 such that the solution $r(t)$ of the problem \eqref{3.contr} satisfies the estimate:
 \begin{equation}
\|q(t)\|^2_{\bar L^2}+\|v(t)\|^2_{H^1}\le Ce^{-\alpha t}\|p(0)\|^2_{\bar L^2},
 \end{equation}
 where positive constants $C$ and $\alpha$ are independent of $t$, $u$ and $p$.
 \end{proposition}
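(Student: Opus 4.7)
The plan is to derive a closed exponential decay inequality for the scalar energy $\|q(t)\|^2_{\bar L^2}$ and then transfer the decay to $\|v(t)\|_{H^1}$ via the elliptic equation of \eqref{3.comp}. First I would multiply the elliptic equation in \eqref{3.comp} by $Dv$ and integrate over $\Omega$. Since $v|_{\partial\Omega}=0$ and $D=D^*>0$, integration by parts gives $(-\Dx v, Dv)\ge d\|\Nx v\|^2_{L^2}$, where $d>0$ is the smallest eigenvalue of $D$. For $L\ge L_0(R)$ sufficiently large, the lower bound $\varphi(z)\ge -C+\alpha z^l$ in \eqref{0.f} implies $\varphi(|v|^2)+L\ge 1$ pointwise, and hence
\begin{equation}
(f(v)+Lv,Dv)\ge d\,\|v\|^2_{L^2}+\alpha d\,\|v\|^{2(l+1)}_{L^{2(l+1)}}.
\end{equation}
The coupling term is handled by integration by parts and the first equation of \eqref{3.comp}: $(\Nx q, Dv)=-(q,\divv(Dv))=(q,\Dt q)=\frac12\frac{d}{dt}\|q\|^2_{\bar L^2}$. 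Collecting these contributions yields the energy inequality
\begin{equation}\label{plan-en}
\frac{d}{dt}\|q\|^2_{\bar L^2}+2d\|\Nx v\|^2_{L^2}+2d\|v\|^2_{L^2}\le 0.
\end{equation}

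The next step is the reverse Ne\v cas--Bogovski comparison $\|q\|_{\bar L^2}\le C(R)\|v\|_{H^1}$, which converts \eqref{plan-en} into an exponential decay estimate. From the elliptic equation, $\Nx q=\Dx v-f(v)-Lv$, hence
\begin{equation}
\|\Nx q\|_{H^{-1}(\Omega)}\le C\|\Nx v\|_{L^2}+\|f(v)\|_{H^{-1}}+L\|v\|_{L^2},
\end{equation}
and on a smooth bounded domain $\|q\|_{\bar L^2}\le C\|\Nx q\|_{H^{-1}}$ (Ne\v cas inequality). For the nonlinear term, \eqref{0.f} gives the pointwise bound $|f(v)|\le C(|v|+|v|^{2l+1})$, so by the 3D Sobolev embeddings $L^{6/5}\hookrightarrow H^{-1}$ and $H^1\hookrightarrow L^6$, combined with $l\le 2$,
\begin{equation}
\|f(v)\|_{H^{-1}}\le C\|v\|_{L^2}+C\|v\|_{H^1}^{2l+1}.
\end{equation}
The uniform a priori bound \eqref{3.R} gives $\|v\|_{H^1}\le R$, so $\|v\|_{H^1}^{2l+1}\le R^{2l}\|v\|_{H^1}$, and we arrive at $\|q\|^2_{\bar L^2}\le C(R)\|v\|^2_{H^1}$.

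Combining with \eqref{plan-en} gives $\frac{d}{dt}\|q\|^2_{\bar L^2}+\alpha\|q\|^2_{\bar L^2}\le 0$ for some $\alpha=\alpha(R)>0$, and Gronwall's lemma yields $\|q(t)\|^2_{\bar L^2}\le e^{-\alpha t}\|p(0)\|^2_{\bar L^2}$. Finally, testing the elliptic equation by $v$ itself and using monotonicity of $w\mapsto f(w)+Lw$ for $L$ large gives the easy direction $\|v\|^2_{H^1}\le C\|q\|^2_{\bar L^2}$, from which the claimed estimate for $v$ follows. The main obstacle is precisely the critical growth $l=2$: the nonlinearity $f(v)\sim|v|^5$ just barely fits into $H^{-1}$ via the 3D critical Sobolev exponent, and $\|f(v)\|_{H^{-1}}$ depends superlinearly on $\|v\|_{H^1}$. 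It is the uniform a priori bound \eqref{3.R} that lets us absorb the superlinear factor into a constant depending on $R$, thereby forcing both the coercivity threshold $L$ and the decay rate $\alpha$ to depend on $R$.
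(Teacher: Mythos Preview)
Your proposal is correct and follows essentially the same route as the paper: multiply the elliptic equation in \eqref{3.comp} by $Dv$ to get a decay inequality for $\|q\|^2_{\bar L^2}$ with dissipation $\|\Nx v\|^2_{L^2_D}$, then convert this dissipation into control of $\|q\|^2_{\bar L^2}$ via the elliptic equation and the bound $\|f(v)\|_{H^{-1}}\le C_R\|v\|_{H^1}$, apply Gronwall, and finally recover $\|v\|_{H^1}$ from $\|q\|_{\bar L^2}$ by testing the elliptic equation again. The only cosmetic difference is that the paper phrases the comparison step by pairing the elliptic equation with the Bogovski lift $\mathfrak Bq$, whereas you invoke the dual Ne\v cas inequality $\|q\|_{\bar L^2}\le C\|\Nx q\|_{H^{-1}}$; these are equivalent formulations of the same estimate. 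One small remark: your choice of $L$ (making $\varphi+L\ge 1$) is, like the paper's, determined solely by the constant $C$ in \eqref{0.f} and does not actually need to depend on $R$, though the proposition permits this.
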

\begin{proof} We fix $L>0$ in such a way that
$$
f(v).Dv+Lv.Dv\ge0,\ v\in\R^3
$$
(it is possible to do so since $f(0)=0$ and $f(v).Dv\ge-C$). Then, multiplying the first and second equations
 of \eqref{3.comp} by $q$ and $Dv$ respectively and integrating over $\Om$, we end up with
\begin{equation}
 \frac12\frac d{dt}\|q\|^2_{\bar L^2}+\|\Nx v\|^2_{L^2_D}\le 0.
\end{equation}
Multiplying now the second equation of \eqref{3.comp} by $\mathfrak Bq$ and using the inequality
$$
\|f(v)\|_{H^{-1}}\le C(1+\|v\|_{H^1}^4)\|v\|_{H^1}\le C_R\|\Nx v\|_{L^2_D},
$$
we infer that
$$
\|q\|_{\bar L^2}^2\le C'_R\|\Nx v\|_{L^2_D}^2
$$
and, therefore,
$$
\frac12\frac d{dt}\|q\|^2_{\bar L^2}+\alpha_R\|q\|^2_{\bar L^2}\le 0,
$$
for some positive $\alpha_R$ depending only on $R$. Applying the Gronwall inequality, we arrive
 at the desired estimate for $q$:
 $$
 \|q(t)\|^2_{\bar L^2}\le e^{-\alpha_R t}\|p(0)\|^2_{\bar L^2}.
 $$
 To get the desired estimate for $\|v\|^2_{H^1}$, it remains to note that multiplication of
 the second equation of \eqref{3.comp} by $Dv$ gives
 $$
 \|\Nx v(t)\|^2_{L^2_D}\le C\|q(t)\|^2_{\bar L^2}.
 $$
 Thus, the proposition is proved.
\end{proof}
We now turn to the smooth part $(w(t),r(t))$ of the solution generated by the problem \eqref{3.contr}.
 At the first step, we derive {\it exponentially growing} estimate for this part in higher norms
 which will be improved later.

 \begin{proposition}\label{Prop3.gr} Let the  assumptions of Proposition \ref{Prop3.contr}
 hold and let $\delta\in(0,\frac12)$.
 Then, the following estimate for the
 solution $(w(t),r(t))$ of \eqref{3.contr} is valid:
\begin{equation}\label{3.gr}
\|r(t)\|^2_{H^\delta}+\|w(t)\|^2_{H^{1+\delta}}\le Ce^{Kt},
\end{equation}
where $K>0$ and the constant $C$ depends on $g$ (through assumption \eqref{1.g})
 and $R$, but is independent of $t$, $p$ and $u$.
 \end{proposition}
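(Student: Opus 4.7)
The plan is to treat \eqref{3.contr} as the linear problem of Appendix \ref{s4} with an additional nonlinear forcing $-[f(u)-f(v)]$, to use fractional elliptic regularity on the stationary second equation, and to close a Gronwall argument on $\|r(t)\|_{H^\delta}$ using the evolution equation $\Dt r=-\divv(Dw)$ together with $r(0)=0$. Since the target estimate \eqref{3.gr} admits exponential growth in time, coarse constants are tolerable at every step.

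The first ingredient is a fractional bound on the nonlinearity. Assumption \eqref{0.f} yields $|f'(u)|\le C(1+|u|^4)$ and hence $|f(u)-f(v)|\le C(1+|u|^4+|v|^4)|w|$; combined with the uniform bound \eqref{3.R} and $H^1\hookrightarrow L^6$, this gives the crude estimate $f(u)-f(v)\in L^\infty(\R_+;L^{6/5})\hookrightarrow L^\infty(\R_+;H^{-1})$. For finer control, pairing with $\phi\in H^{1-\sigma}\hookrightarrow L^{6/(1+2\sigma)}$ and applying H\"older's inequality with $|u|^4, |v|^4\in L^{3/2}$ and $w\in L^{6/(1-2\sigma)}$ yields
\begin{equation*}
\|f(u)-f(v)\|_{H^{-1+\sigma}}\le C(R)\|w\|_{H^{1+\sigma}},\qquad \sigma\in(0,1/2),
\end{equation*}
the borderline embedding $H^{1+\sigma}\hookrightarrow L^{6/(1-2\sigma)}$ being the origin of the restriction $\delta<1/2$ in the statement.

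Next, standard fractional elliptic regularity applied to the second equation of \eqref{3.contr} gives
\begin{equation*}
\|w\|_{H^{1+\sigma}}\le C\|r\|_{H^\sigma}+C\|F\|_{H^{-1+\sigma}},\qquad F:=-[f(u)-f(v)]+Lv+g(t),
\end{equation*}
with $Lv+g$ uniformly in $L^2\hookrightarrow H^{-1+\sigma}$ by Proposition \ref{Prop3.contr} and \eqref{1.g}. Direct substitution of the previous bound does not close since $C(R)\sim R^4$ is not small; instead, I interpolate the nonlinear bound between $H^{-1}$ and $H^{-1+\sigma_0}$ for some $\sigma_0\in(\sigma,1/2)$:
\begin{equation*}
\|f(u)-f(v)\|_{H^{-1+\sigma}}\le CR^4\|w\|_{H^1}^{1-\sigma/\sigma_0}\|w\|_{H^{1+\sigma_0}}^{\sigma/\sigma_0},
\end{equation*}
with subunit exponent $\sigma/\sigma_0<1$. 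Young's inequality then absorbs $\|w\|_{H^{1+\sigma_0}}$ into the left-hand side of the elliptic estimate at level $\sigma_0$, and after a short finite bootstrap raising the fractional order up to $\delta$ I obtain $\|w(t)\|_{H^{1+\delta}}\le C(1+\|r(t)\|_{H^\delta})$. The first equation then gives $\|\Dt r\|_{H^\delta}\le C\|w\|_{H^{1+\delta}}\le C(1+\|r\|_{H^\delta})$, and Gronwall with $r(0)=0$ furnishes $\|r(t)\|_{H^\delta}\le Ce^{Kt}$; the bound for $\|w(t)\|_{H^{1+\delta}}$ follows from the elliptic estimate.

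The main obstacle is the $H^1$-criticality of the quintic nonlinearity in three dimensions: the $L^{6/5}$-bound on $f(u)-f(v)$ embeds only into $H^{-1}$ and yields no fractional gain, while the natural absorption constant $C(R)\sim R^4$ cannot be made small uniformly in the radius of the absorbing set. The interpolation-bootstrap argument above resolves this by exploiting the strict inequality $\sigma<\sigma_0<1/2$ to produce a subunit Young exponent, which is precisely the mechanism underlying the restriction $\delta<1/2$ in the statement.
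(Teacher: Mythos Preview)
Your closure argument for the elliptic estimate does not work. The point where it breaks is the sentence ``Young's inequality then absorbs $\|w\|_{H^{1+\sigma_0}}$ into the left-hand side of the elliptic estimate at level $\sigma_0$.'' The elliptic estimate at level $\sigma_0$ reads
\[
\|w\|_{H^{1+\sigma_0}}\le C\|r\|_{H^{\sigma_0}}+\|f(u)-f(v)\|_{H^{-1+\sigma_0}}+C,
\]
and your own sharp bound gives $\|f(u)-f(v)\|_{H^{-1+\sigma_0}}\le C(R)\|w\|_{H^{1+\sigma_0}}$ with $C(R)\sim R^4$. So at the top level $\sigma_0$ you face exactly the same large-constant obstruction you were trying to avoid at level $\sigma$; interpolating only pushes the problem to a higher fractional index, and since every $\sigma_k$ must stay below $1/2$, the chain never closes. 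The criticality of the quintic nonlinearity means the H\"older balance $(|u|^4)\cdot w\cdot\phi$ with $|u|^4\in L^{3/2}$, $\phi\in L^{6/(1+2\sigma)}$ forces $w\in L^{6/(1-2\sigma)}\simeq H^{1+\sigma}$ with no slack, so no purely linear elliptic bootstrap on $-\Dx w=\text{RHS}$ can absorb $C(R)$.

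The paper circumvents this by using the monotonicity hypothesis \eqref{1.fmon} (which is among the assumptions you inherit from Proposition~\ref{Prop3.contr} but never exploit). Writing $f(u)-f(v)=a(x)w$ with $a(x)=\int_0^1 f'(\kappa u+(1-\kappa)v)\,d\kappa\ge0$, the second equation of \eqref{3.contr} becomes $-\Dx w+a(x)w=-\Nx r+Lv+g$. Testing this against the \emph{nonlinear} multiplier $w|w|^n$ (Lemma~\ref{Lem3.w}) yields $\|w\|_{L^s}\le C(\|r\|_{\bar H^\delta}+1)$ with $s=6/(1-2\delta)$ and a constant $C$ \emph{independent of $a$}, hence independent of $R$. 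Only then does \eqref{3.ffrac} give $\|f(u)-f(v)\|_{H^{-1+\delta}}\le C_R(\|r\|_{\bar H^\delta}+1)$ without a dangerous $\|w\|_{H^{1+\delta}}$ on the right, and the linear elliptic estimate plus Gronwall close exactly as in your final paragraph. In short, the missing idea is that the sign condition on $f'$ lets you treat $-\Dx+a(x)$ as a single coercive operator and extract the $L^s$ bound by a Moser-type test; without it the critical growth cannot be absorbed.
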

\begin{proof} To verify this estimate we need the following standard lemma.

\begin{lemma}\label{Lem3.w} Let $a(x)\ge0$ be a symmetric measurable matrix and
the function $w\in H^1_0(\Omega)\cap L^2_a(\Omega)$ be a solution of the following problem:
\begin{equation}\label{3.ell}
-\Dx w+a(x)w=\Nx r+g,\ \ w\big|_{\partial\Omega}=0,
\end{equation}
where $L^2_a(\Omega)$ is a weighted Lebesgue space determined by the semi-norm
$$
\|w\|_{L^2_a}^2:=\int_{\Omega}a(x)w(x)\cdot w(x)\,dx<\infty,
$$
$r\in \bar H^\delta(\Omega):=H^\delta(\Omega)\cap \bar L^2(\Omega)$ for some $\delta\in(0,\frac12)$, and $g\in L^2(\Omega)$. Then,
 the following estimate holds:
\begin{equation}\label{w-est}
\|w\|_{L^s}\le C(\|r\|_{\bar H^\delta}+\|g\|_{L^2}),
\end{equation}
where the constant $C$ is independent of $a$, $g$, $w$ and $r$ and $s=\frac6{1-2\delta}$ is the Sobolev embedding
 exponent for $H^{1+\delta}\subset L^s$.
\end{lemma}
\begin{proof}[Proof of the lemma] Since $g$ is more regular than $\Nx r$, it suffices
 to verify the estimate for $g=0$ only. We give below only the formal derivation of  \eqref{w-est} which can
 be justified by standard approximation arguments. To this end,  we multiply equation \eqref{3.ell} by $w|w|^n$, where
  the exponent $n$ will be fixed later and integrate over $\Om$. This gives
  $$
  (|\Nx w|^2,|w|^n)+\|\Nx(|w|^{\frac{n+2}2})\|^2_{L^2}\le C(|r|(|\Nx w||w|^{n/2}),|w|^{n/2}).
  $$
Using the proper H\"older inequality together with Sobolev embeddings, we get
$$
(|\Nx w|^2,|w|^n)+\|w\|_{L^{3(n+2)}}^{n+2}\le
 C\|r\|_{\bar H^\delta}(|\Nx w|^2,|w|^n)^{1/2}\|w\|^{n/2}_{L^{mn/2}},
$$
where $\frac12-\frac\delta3+\frac12+\frac1m=1$, i.e., $m=\frac3\delta$. Therefore, we have
$$
\|w\|_{L^{3(n+2)}}^{n+2}\le C\|r\|_{\bar H^\delta}^{n+2}+\frac12\|w\|^{n+2}_{L^{\frac{3n}{2\delta}}}.
$$
Fixing now $n$ in such a way that $3(n+2)=\frac{3n}{2\delta}$, we see that $3(n+2)=s$ and the last
 estimate finishes the proof of the lemma.
\end{proof}
We now return to the proof of the proposition. First, applying  the lemma to the second
 equation of \eqref{3.contr} with
$$
a(x):=\int_0^1f'(\kappa u(x)+(1-\kappa) v(x))\,d\kappa\ge0,
$$
we end up with
\begin{equation}
\|w(t)\|_{L^s}\le C(\|r(t)\|_{\bar H^\delta}+\|g(t)\|_{L^2}+L\|v(t)\|_{L^2})\le
C\(\|r(t)\|_{\bar H^\delta}+1\).
\end{equation}
Second, using the growth restriction on $f$ and Sobolev embedding theorems, it is not difficult to see that
\begin{equation}\label{3.ffrac}
\|f(u)-f(v)\|_{H^{-1+\delta}}\le C(1+\|u\|_{H^1}^4+\|v\|^4_{H^1})\|u-v\|_{L^s}.
\end{equation}
Therefore,
\begin{equation}\label{3.ffrac1}
\|f(u(t))-f(v(t))\|_{H^{-1+\delta}}\le C_R\|w(t)\|_{L^s}\le C_R\(\|r(t)\|_{\bar H^\delta}+1\).
\end{equation}
Third, we multiply the second equation of \eqref{3.contr} by $(-\Dx)^\delta w$ and integrate over $\Om$. This gives
\begin{multline}
\|w\|^2_{H^{1+\delta}}=((-\Dx)^{-1+\delta/2}\Nx r,(-\Dx)^{1+\delta/2}w)+\\+
(-\Dx)^{-1+\delta/2}[f(u)-f(v)],(-\Dx)^{1+\delta/2}w)-(Lv(t)+g(t),(-\Dx)^{\delta}w)
\end{multline}
and therefore
\begin{equation}\label{3.wr}
\|w\|_{H^{1+\delta}}\le C(\|f(u)-f(v)\|_{H^{-1+\delta}}+\|r\|_{\bar H^{\delta}}+1)\le
 C_R\(1+\|r\|_{\bar H^\delta}\).
\end{equation}
Finally, from the first equation of \eqref{3.contr}, we get
\begin{multline}
\|r(t)\|_{\bar H^\delta}\le\int_0^t\|\divv(Dw(\tau))\|_{H^\delta}\,d\tau \le \\\le
 C\int_0^t\|w(\tau))\|_{H^{1+\delta}}\,d\tau\le C_R\int_0^t(\|r(\tau)\|_{\bar H^\delta}+1)\,d\tau
\end{multline}
and the Gronwall inequality finishes the proof of the proposition.
\end{proof}
At the next step, we split following \cite{Z04} (see also \cite{MSSZ} for some improvements) the
 solution $u(t)$ of \eqref{1.trunc} on uniformly small ($\bar u(t)$  and smooth
 ($\tilde u(t)$) parts.

\begin{proposition}\label{Prop3.split} Let $\beta>0$ be arbitrary and $\delta\in(0,\frac12)$. Let also
 $(p(t),u(t))$ be a solution of \eqref{1.trunc}
satisfying \eqref{3.R}. Then, there exists $T=T_\delta$ such that
 the function $u(t)$ can be split in a sum
\begin{equation}\label{3.split}
u(t)=\bar u(t)+\tilde u(t),
\end{equation}
where for every $t\ge T$
\begin{equation}\label{3.good}
\|\bar u(t)\|_{H^1}\le\beta,\ \ \|\tilde u(t)\|_{H^{1+\delta}}\le C_\beta
\end{equation}
and the constant $C_\beta$ depends only on $\beta$, $\delta$ and $R$.
\end{proposition}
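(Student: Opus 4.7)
The plan is to combine the exponential decay of the contracting part (Proposition \ref{Prop3.contr}) with the exponentially growing smoothness bound for the regular part (Proposition \ref{Prop3.gr}) through a \emph{sliding initial time} construction: for each $t\ge T$ I would produce a separate splitting $u(t)=\bar u(t)+\tilde u(t)$ by restarting the decomposition \eqref{3.comp}--\eqref{3.contr} at a well-chosen past time $s(t)<t$, depending on $\beta$.

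More concretely, for every $s\ge 0$ I would denote by $(q^{(s)},v^{(s)})$ and $(r^{(s)},w^{(s)})$ the solutions of \eqref{3.comp} and \eqref{3.contr} respectively with initial data $q^{(s)}(s)=p(s)$ and $r^{(s)}(s)=0$ prescribed at $t=s$; then by construction $u(t)=v^{(s)}(t)+w^{(s)}(t)$ for all $t\ge s$. Since the orbit $(u(\cdot),p(\cdot))$ remains in the $R$-ball of \eqref{3.R} at every instant, Propositions \ref{Prop3.contr} and \ref{Prop3.gr} can be applied to the shifted problem to yield
\[
\|v^{(s)}(t)\|_{H^1}^2 \le C R^2\, e^{-\alpha(t-s)},\qquad \|w^{(s)}(t)\|_{H^{1+\delta}}^2 \le C\, e^{K(t-s)},
\]
with constants $C,\alpha,K$ depending on $R,\delta,g$ but independent of $s$ and $t$. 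Given $\beta>0$ I would fix the lag $\tau_\beta:=\frac{1}{\alpha}\log(CR^2/\beta^2)$, set $T:=\tau_\beta$, and for $t\ge T$ define
\[
\bar u(t):=v^{(t-\tau_\beta)}(t),\qquad \tilde u(t):=w^{(t-\tau_\beta)}(t).
\]
The two displayed inequalities applied at this particular choice of $s=t-\tau_\beta$ then immediately give $\|\bar u(t)\|_{H^1}\le\beta$ and $\|\tilde u(t)\|_{H^{1+\delta}}\le \sqrt{C}\,e^{K\tau_\beta/2}=:C_\beta$, which is exactly \eqref{3.good}.

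The only delicate point to verify is that the constants in Propositions \ref{Prop3.contr} and \ref{Prop3.gr} are genuinely uniform in the restart time $s$. This should be automatic from the way those propositions are stated: both are proved under the sole assumption that the initial data lies in the absorbing $R$-ball, which by \eqref{3.R} holds for $(p(s),u(s))$ at every $s\ge 0$. Note that the maps $t\mapsto \bar u(t)$ and $t\mapsto \tilde u(t)$ are defined \emph{pointwise} at each $t\ge T$ using a different decomposition at each time, and are not required to satisfy any evolution equation themselves; hence the time-dependence of the choice of $s$ causes no conceptual difficulty and the statement follows.
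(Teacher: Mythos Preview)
Your proof is correct and follows essentially the same idea as the paper: restart the decomposition \eqref{3.comp}--\eqref{3.contr} at a past time so that the contracting component has decayed below $\beta$ while the smooth component has grown only for a bounded amount of time. The only cosmetic difference is that the paper restarts at the discrete grid points $T(n-1)$ and defines $\bar u,\tilde u$ piecewise on the intervals $[Tn,T(n+1))$, whereas you use a continuous sliding restart at $s=t-\tau_\beta$; your version is marginally cleaner since the elapsed time is exactly $\tau_\beta$ rather than lying in $[T,2T]$, but the content is the same.
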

\begin{proof} This splitting is an almost immediate corollary of the proved Propositions \ref{Prop3.contr}
and \ref{Prop3.gr}. Indeed, let us fix $T=T_\beta$ from the equation
$$
Ce^{-\alpha T}R^2=\beta^2,
$$
where all of the constants are the same as in Proposition \ref{Prop3.contr}. Then, for the $v$-component
 of the solution $u$, we will have the estimate
 $$
 \|v(t)\|_{H^1}\le\beta,\ \ t\ge T.
 $$
 Moreover, if we fix $C_\beta$ from $Ce^{2KT}=C_\beta^2$ where the constants
  are the same as in \eqref{3.gr}, we get
 $$
 \|w(t)\|_{H^{1+\delta}}\le C_\beta
 $$
 if $t\le 2T$. Thus, functions $v(t)$ and $w(t)$ give the desired splitting of $u(t)$ for $t\in[T,2T]$.
 \par
 To construct the desired splitting for all $t\ge T$, we define functions $(q_n(t),v_n(t))$
 and $(r_n(t),w_n(t))$ for all $n\in\Bbb N$ as solutions of \eqref{3.comp} and \eqref{3.contr} respectively,
 but starting from $t=T(n-1)$ with the initial conditions
 $$
 q_n\big|_{t=T(n-1)}=0,\ \ r_n\big|_{t=T(n-1)}=p\big|_{t=T(n-1)}.
 $$
 Then, arguing analogously, we see that $u(t)=v_n(t)+w_n(t)$ gives the required splitting  on the
 interval $t\in[Tn,T(n+1)]$. Finally, to get the desired splitting for all $t\ge T$, we define $\bar u$ and
  $\tilde u(t)$ as hybrid piece-wise continuous functions:
  $$
  \bar u(t)=v_n(t),\ t\in[Tn,T(n+1)),\ \ \tilde u(t)=w_n(t),\ \ t\in[Tn,T(n+1)),\ \ n\in\Bbb Z.
  $$
  This finishes the proof of the proposition.
\end{proof}
We are now ready to refine Proposition \ref{Prop3.gr} and get the dissipative estimate for $(r(t),w(t))$.

\begin{proposition}\label{Prop3.dis} Let the assumptions of Proposition \ref{Prop3.gr} hold.
Then the solution $(r(t),w(t))$ of problem \eqref{3.contr} satisfies the estimate
\begin{equation}\label{3.notgr}
\|r(t)\|_{\bar H^{\delta}}+\|w(t)\|_{H^{1+\delta}}\le C,
\end{equation}
where the constant $C$ depends on $R$, but is independent of $u$, $p$ and $t$.
\end{proposition}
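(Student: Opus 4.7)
The plan is to upgrade the exponentially growing estimate of Proposition~\ref{Prop3.gr} to a uniform-in-time dissipative bound by exploiting the nonlinear splitting $u = \bar u + \tilde u$ of Proposition~\ref{Prop3.split}, where $\|\bar u\|_{H^1} \le \beta$ can be made arbitrarily small and $\|\tilde u\|_{H^{1+\delta}} \le C_\beta$ is bounded. The guiding principle is that this splitting allows us to refine the crude nonlinearity bound $\|f(u)-f(v)\|_{H^{-1+\delta}} \le C_R \|w\|_{L^s}$ used in Proposition~\ref{Prop3.gr}: in the refined version the ``dangerous'' contribution carries a coefficient that can be made arbitrarily small, so that the Gronwall step giving exponential growth turns into a dissipative one.

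Concretely, I would decompose
$$f(u)-f(v) = [f(\bar u + \tilde u)-f(\tilde u)] + [f(\tilde u)-f(v)].$$
The first bracket is pointwise bounded by $C(|u|^4+|\tilde u|^4+1)|\bar u|$; by arranging H\"older so that $\bar u$ enters only through its (small) $L^6$-norm while the degree-four prefactors are distributed between $\tilde u \in L^{6/(1-2\delta)}$ (bounded, since $\tilde u \in H^{1+\delta}$) and mixed factors dominated by $\tilde u$, and by treating the second bracket analogously using the exponential decay $\|v\|_{H^1}\to 0$ furnished by Proposition~\ref{Prop3.contr}, the expected output is an estimate of the form
$$\|f(u)-f(v)\|_{H^{-1+\delta}} \le \kappa(\beta)\|w\|_{H^{1+\delta}} + C_\beta,$$
with $\kappa(\beta) \to 0$ as $\beta \to 0$. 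Inserting this into the derivation of~\eqref{3.wr} and absorbing the $\kappa(\beta)\|w\|_{H^{1+\delta}}$ term then yields $\|w(t)\|_{H^{1+\delta}} \le C(\|r(t)\|_{\bar H^\delta} + C_\beta)$ with a constant $C$ independent of $R$, which is the key quantitative improvement over \eqref{3.wr}.

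To close the argument, I would derive a dissipative differential inequality for $\|r\|^2_{\bar H^\delta}$ by testing the elliptic $w$-equation against a higher-order multiplier of the form $D(-\Dx)^\delta w$ (defined via the spectral calculus of the Dirichlet Laplacian) and using $\Dt r + \divv(Dw) = 0$ to extract both the time derivative $\tfrac12\tfrac d{dt}\|r\|^2_{\bar H^\delta}$ and the dissipation $c\|w\|^2_{H^{1+\delta}}$. Combining with the refined elliptic bound above yields a Lyapunov inequality
$$\tfrac d{dt}\|r\|^2_{\bar H^\delta} + \alpha\|r\|^2_{\bar H^\delta} \le C_\beta,$$
and Gronwall's inequality then delivers the uniform bound on $\|r(t)\|_{\bar H^\delta}$, whence also on $\|w(t)\|_{H^{1+\delta}}$ via~\eqref{3.wr}.

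The hard part will be the refined nonlinearity estimate at the critical quintic growth rate. Since $\bar u$ is controlled only in $H^1\subset L^6$, the $L^p$-spaces with $p>6$ that would naturally absorb the degree-four prefactors in H\"older are not available for $\bar u$ (nor for $v$); the fix is to rearrange each H\"older pairing so that $\bar u$ and $v$ enter only through their small $L^6$-norms, while the high-integrability burden is borne entirely by $\tilde u \in L^{6/(1-2\delta)}$ and by mixed products in which $\tilde u$ is the dominant factor. A secondary, purely technical obstacle is the rigorous manipulation of $(-\Dx)^\delta$ on the bounded domain with Dirichlet boundary conditions on $w$ and mean-zero constraint on $r$; this is handled by the spectral calculus of the Dirichlet Laplacian together with a Galerkin approximation to justify the energy identities at the fractional level.
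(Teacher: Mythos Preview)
Your overall strategy---use Proposition~\ref{Prop3.split} to refine the nonlinearity estimate so that the ``bad'' coefficient becomes $O(\beta)$, then close dissipatively---is exactly right and matches the paper's. But two concrete choices in your execution would not go through.

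\textbf{The decomposition of $f(u)-f(v)$ is the wrong way round.} You split through $\tilde u$, writing $[f(\bar u+\tilde u)-f(\tilde u)]+[f(\tilde u)-f(v)]$. The first bracket is pointwise $\le C(1+|\bar u|^4+|\tilde u|^4)|\bar u|$, and the pure term $|\bar u|^5$ cannot be placed in $H^{-1+\delta}$: by duality this needs $|\bar u|^5\in L^{6/(5-2\delta)}$, i.e.\ $\bar u\in L^{30/(5-2\delta)}$, which strictly exceeds the $L^6$ control you have on $\bar u\in H^1$ for any $\delta>0$. There is no H\"older arrangement that avoids this, since the $|\bar u|^5$ term contains no factor of $\tilde u$ to carry the high integrability. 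The second bracket is no better: the prefactor $\|\tilde u\|_{H^1}+\|v\|_{H^1}$ is not small (only $\|v\|_{H^1}$ is), and the $L^s$-norm lands on $\tilde u-v=w-\bar u$, which again requires $\|\bar u\|_{L^s}$. The paper instead splits through $\bar u$:
\[
f(u)-f(v)=[f(\bar u+\tilde u)-f(\bar u)]+[f(\bar u)-f(v)].
\]
Now the first bracket puts the $L^s$-demand on $\tilde u\in H^{1+\delta}\subset L^s$, giving a bounded constant. In the second bracket \emph{both} endpoints $\bar u,v$ are small in $H^1$, so with the refinement $f'(0)=0$ one obtains the improved prefactor $(\|\bar u\|_{H^1}+\|v\|_{H^1})\le C\beta$; the $L^s$-norm lands on $\bar u-v=w-\tilde u$, hence $\le\|w\|_{L^s}+C_\beta$. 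Combining with Lemma~\ref{Lem3.w} gives $\|f(u)-f(v)\|_{H^{-1+\delta}}\le C\beta\|r\|_{\bar H^\delta}+C_\beta$ with $C$ independent of $\beta$.

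\textbf{The closing step via testing by $D(-\Dx)^\delta w$ is not innocent on a bounded domain.} To extract $\tfrac{d}{dt}\|r\|^2_{\bar H^\delta}$ from $(\Nx r,D(-\Dx)^\delta w)$ you would need $\divv\!\big(D(-\Dx_D)^\delta w\big)$ to coincide (up to controllable errors) with $(-\Dx)^\delta\divv(Dw)$ acting on the mean-zero scalar $r$; but $w$ satisfies Dirichlet conditions while $r$ does not, so the fractional Laplacians involved are different operators and the required commutation fails---this is precisely the ``bad boundary terms'' obstruction highlighted in the introduction. The paper sidesteps this entirely: it treats \eqref{3.contr} as the \emph{linear} system \eqref{2.au} with $f(u)-f(v)$ thrown into the external force, and invokes the semigroup estimate \eqref{2.bad} with $K_\delta=-\alpha<0$ from Corollary~\ref{CorA.main} (proved in the Appendix by localization, not by energy estimates). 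Then
\[
\|r(t)\|_{\bar H^\delta}\le C\|r(0)\|_{\bar H^\delta}e^{-\alpha t}+C_\beta+C\beta\int_0^te^{-\alpha(t-\tau)}\|r(\tau)\|_{\bar H^\delta}\,d\tau,
\]
and choosing $\beta$ so that $C\beta\le\alpha/2$ closes by Gronwall.
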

\begin{proof} Without loss of generality, we may assume that estimates \eqref{3.good} hold
for $t\ge0$. The general case is reduced to this particular one by the proper time shift. The only
difference is that we need to put non-zero initial data for $r(t)$. Since the $H^\delta$ norm of $r(t)$ on
 the interval $t\in[0,T]$ can be controlled by \eqref{3.gr}, we just need to assume that
 \begin{equation}
r\big|_{t=0}=r_0,\ \ \|r_0\|_{H^\delta}\le C_\beta.
 \end{equation}
 This also gives that
 \begin{equation}\label{3.vgood}
 \|v(t)\|_{H^1}\le \beta,\ \ t\ge0.
 \end{equation}
Moreover, again without loss of generality, we may assume that $f'(0)=0$. In a general case, the term
 $f'(0)w(t)$ is lower order and can be treated as a part of $g(t)$.
\par
The idea of the proof is to refine estimate \eqref{3.ffrac1}
using the result of Proposition \ref{Prop3.split}. First, we refine \eqref{3.ffrac} using the fact
 that $f'(0)=0$, namely, this assumption gives us that
 \begin{equation}\label{3.ffrac3}
 \|f(u)-f(v)\|_{H^{-1+\delta}}\le C(\|u\|_{H^1}+\|v\|_{H^1})(1+\|u\|_{H^1}^3+\|v\|_{H^1}^3)\|u-v\|_{L^s}
 \end{equation}
 for some constant $C$ depending only on $f$. Second, we write
 $$
 f(u)-f(v)=[f(\bar u+\tilde u)-f(\bar u)]+[f(\bar u)-f(v)]
 $$
 and apply \eqref{3.ffrac3} to both terms on the right-hand side. Indeed,
 since $H^{1+\delta}\subset L^s$ and the function $\tilde u$ is bounded in $H^{1+\delta}$, we have
 $$
 \|f(\bar u+\tilde u)-f(\bar u)\|_{H^{-1+\delta}}\le
 C(1+\|u\|_{H^1}^4+\|\bar u\|_{H^1}^4)\|\tilde u\|_{H^{1+\delta}}\le C_1
$$
for some $C_1>0$ which depends on $\beta$ and $R$. Applying estimate
\eqref{3.ffrac3} to the second term and using inequalities \eqref{3.good} and \eqref{3.vgood}, we get
$$
\|f(\bar u)-f(v)\|_{H^{-1+\delta}}\le C\beta\|\bar u-v\|_{L^s}
$$
and using that
$$
\|\bar u-v\|_{L^s}=\|\tilde u-w\|_{L^s}\le \|w\|_{L^s}+C\|\tilde u\|_{H^{1+\delta}}\le \|w\|_{L^s}+C
$$
we get
\begin{equation}\label{3.ffrac4}
\|f(u)-f(v)\|_{H^{-1+\delta}}\le C\beta\|w\|_{L^s}+C_\beta,
\end{equation}
where the constant $C$ is independent of $\beta>0$. Together with
the result of Lemma \ref{Lem3.w}, we finally arrive at the refined estimate
\begin{equation}\label{3.ffrac5}
\|f(u(t))-f(v(t))\|_{H^{-1+\delta}}\le C\beta\|r(t)\|_{H^{\delta}}+C_\beta.
\end{equation}
Crucial for us is  that the constant $C$ is independent of $\beta$, so the coefficient
in front of $\|r(t)\|_{H^\delta}$ can be made arbitrary small by the choice of $\delta$.
\par
We are now ready to complete the proof of the proposition. To this end, we treat equation \eqref{3.contr} as
a linear \eqref{2.au} interpreting the term\\ $f(u(t))-f(v(t))$ as a part of external force $g(t)$ and
use estimate \eqref{2.bad} with  $K_\delta=-\alpha<0$, see Corollary \ref{CorA.main}. This gives
\begin{equation}
\|r(t)\|_{\bar H^\delta}\le C\|r(0)\|_{\bar H^\delta}e^{-\alpha t}+
C_\beta+C\beta\int_0^t e^{-\alpha(t-\tau)}\|r(\tau)\|_{\bar H^\delta}\,d\tau.
\end{equation}
Fixing now $\beta>0$ in such a way that $C\beta=\frac\alpha2$ and applying the Gronwall inequality,
we end up with the desired estimate
$$
\|r(t)\|_{\bar H^\delta}\le C\|r(0)\|_{\bar H^\delta}e^{-\alpha t/2}+C_1.
$$
Combining this estimate with \eqref{3.wr}, we end up with \eqref{3.notgr} and finish the
proof of the proposition.
\end{proof}
We now summarize our results concerning the truncated system \eqref{1.trunc} under the assumptions \eqref{1.fmon} and
 \eqref{1.g} for the nonlinearity $f$ and
the external force $g(t)$. We first mention that the global well-posedness and  dissipativity of
this problem in the space $\bar L^2(\Omega)$
can be obtained exactly as in Theorems \ref{Th1.E-dis} and \ref{Th1.un}, so we have the estimate
\begin{equation}\label{3.trunc-dis}
\|p(t)\|_{\bar L^2}^2+\|u(t)\|^2_{H^1}\le Q(\|p(0)\|_{\bar L^2})e^{-\alpha t}+Q(\|g\|_{L^\infty}),
\end{equation}
where positive constant $\alpha$ and monotone function $Q$ are independent of $p$ and $t$.
\par
Thus, problem \eqref{1.trunc} can be considered independently of problem \eqref{1.main} on the
whole phase space $\bar L^2(\Omega)$ and estimate \eqref{3.trunc-dis} gives us the existence of
an absorbing ball in $\bar L^2(\Omega)$, so the key assumptions \eqref{3.R} will be
automatically satisfied if we take the initial data from this absorbing ball.
\par
Let us denote by $\Cal U(t):\bar L^2(\Omega)\to\bar L^2(\Omega)$ the solution operator for
 problem \eqref{1.trunc}:
\begin{equation}
\Cal U(t)p(0):=p(t),
\end{equation}
where $p(t)$ is a solution of \eqref{1.trunc}. Then, taking into the account that the $u(t)$-component
of the
solution can be restored in a unique way (due to Lemma \ref{Lem3.w}) if the $p(t)$-component
is known, we can reformulate the results of Propositions \ref{Prop3.dis} and \ref{Prop3.contr} as follows.

\begin{corollary}\label{Cor3.exp} Let the nonlinearity $f$ satisfy \eqref{1.fmon},
\eqref{0.f} and \eqref{0.fstr} and the function $g$ satisfy \eqref{1.g}.
 Then, for a sufficiently large $R$, the $R$-ball
$\Cal B_R^\delta$ of radius $R$ in $\bar H^\delta(\Omega)$ is an exponentially attracting for the solution
 operator $\Cal U(t)$, i.e., there exists positive constant $\alpha>0$ and a monotone
  function $Q$ such that, for every bounded set $B\subset\bar L^2(\Omega)$,
\begin{equation}\label{3.expd}
\dist_{\bar L^2}(\Cal U(t)B,\Cal B_R^\delta)\le Q(\|B\|_{\bar L^2})e^{-\alpha t},
\end{equation}
where $\dist_H(A,B)$ stands for the non-symmetric Hausdorff distance between the sets $A$
 and $B$ in a Banach space $H$.
\end{corollary}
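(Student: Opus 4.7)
The statement is essentially a repackaging of Propositions \ref{Prop3.contr} and \ref{Prop3.dis}. The plan is to first reduce to the case where the initial datum $p(0)$ already lies in the absorbing ball in $\bar L^2(\Omega)$, then apply the decomposition $p=q+r$ from \eqref{3.comp}--\eqref{3.contr} and show that $r(t)$ lies in a fixed ball of $\bar H^\delta$ while $q(t)$ decays exponentially in $\bar L^2$.

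First, given a bounded set $B\subset\bar L^2(\Omega)$, the $\bar L^2$-dissipativity \eqref{3.trunc-dis} gives a time $T_0=T_0(\|B\|_{\bar L^2})$ such that $\|\Cal U(t)p(0)\|_{\bar L^2}\le R_0$ for all $t\ge T_0$ and all $p(0)\in B$, where $R_0$ is a universal constant depending only on $\|g\|_{L^\infty}$. On $[0,T_0]$ the solution is controlled by $Q(\|B\|_{\bar L^2})$, so it is enough to prove the exponential attraction statement for initial data in the absorbing ball (the additional cost is a multiplicative factor $e^{\alpha T_0}$ which is absorbed into the monotone function $Q$ on the right-hand side of \eqref{3.expd}).

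Second, for $p(0)$ in the absorbing ball we apply the splitting $p=q+r$, $u=v+w$ of \eqref{3.comp}--\eqref{3.contr} (with the constant $L$ fixed as in Proposition \ref{Prop3.contr}). Since the a priori bound \eqref{3.R} holds, Proposition \ref{Prop3.contr} yields
\begin{equation*}
\|q(t)\|_{\bar L^2}\le Ce^{-\alpha t/2}\|p(0)\|_{\bar L^2}\le C_1 e^{-\alpha t/2},
\end{equation*}
while Proposition \ref{Prop3.dis} gives $\|r(t)\|_{\bar H^\delta}\le R$ for some constant $R$ depending only on $R_0$ and $\|g\|_{L^\infty}$. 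Choose this $R$ as the radius of the attracting ball $\Cal B_R^\delta$.

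Third, since $\Cal U(t)p(0)=p(t)=q(t)+r(t)$ with $r(t)\in\Cal B_R^\delta$, we conclude
\begin{equation*}
\dist_{\bar L^2}(\Cal U(t)p(0),\Cal B_R^\delta)\le\|q(t)\|_{\bar L^2}\le C_1 e^{-\alpha t/2},
\end{equation*}
which, combined with the transient estimate on $[0,T_0]$, gives the desired exponential attraction \eqref{3.expd}. The main conceptual step was already carried out in Proposition \ref{Prop3.dis} (the refined bound \eqref{3.ffrac5} that makes the Gronwall constant small); here we only need to verify that the ball $\Cal B_R^\delta$ can be chosen uniformly in the initial data from the $\bar L^2$-absorbing ball, which is immediate from the fact that the constants in Proposition \ref{Prop3.dis} depend only on $R_0$ and $\|g\|_{L^\infty}$.
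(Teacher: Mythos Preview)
Your proof is correct and follows exactly the approach the paper intends: the corollary is presented there as a direct reformulation of Propositions \ref{Prop3.contr} and \ref{Prop3.dis}, and you have simply spelled out the standard reduction to the absorbing ball via \eqref{3.trunc-dis} followed by the splitting $p=q+r$ with $q$ decaying by Proposition \ref{Prop3.contr} and $r$ uniformly bounded in $\bar H^\delta$ by Proposition \ref{Prop3.dis}.
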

We also have the analogue of the dissipative estimate \eqref{3.trunc-dis} in the space
 $H^\delta$ for any exponent   $\delta\in[0,\frac12)$.

\begin{corollary}\label{Cor3.hd} Let the assumptions of Corollary \ref{Cor3.exp} hold and let
 $p(0)\in \bar H^\delta(\Omega)$ for some $\delta\in[0,\frac12)$. Then the following
 dissipative estimate holds for the solution of problem \eqref{1.trunc}:
 \begin{equation}\label{3.disd}
\|p(t)\|_{\bar H^\delta}^2+\|u(t)\|^2_{H^{1+\delta}}\le
Q(\|p(0)\|_{\bar H^\delta})e^{-\alpha t}+Q(\|g\|_{L^\infty(\R_+,L^2)})
 \end{equation}
 for some positive $\alpha$ and monotone function $Q$ which are independent of $p$ and $t$.
\end{corollary}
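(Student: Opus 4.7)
The plan is to revisit the nonlinear splitting $(p,u)=(q,v)+(r,w)$ of \eqref{3.comp}--\eqref{3.contr}, this time with the specific initial conditions $q|_{t=0}=p(0)\in\bar H^\delta(\Omega)$ and $r|_{t=0}=0$, and to control the two parts separately. By the $\bar L^2$ dissipativity \eqref{3.trunc-dis}, I may assume without loss of generality that $\|p(t)\|_{\bar L^2}\le R$ for all $t\ge0$, at the price of absorbing a $Q(\|g\|_{L^\infty(\R_+,L^2)})$ term into the final constant. Proposition \ref{Prop3.dis} then yields the uniform bound $\|r(t)\|_{\bar H^\delta}+\|w(t)\|_{H^{1+\delta}}\le C(R)$ for the smooth part, which likewise contributes only to the $Q(\|g\|_{L^\infty})$-piece of \eqref{3.disd}.

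The heart of the argument is then to upgrade Proposition \ref{Prop3.contr} and establish the exponential decay
\[
\|q(t)\|_{\bar H^\delta}+\|v(t)\|_{H^{1+\delta}}\le Q(\|p(0)\|_{\bar H^\delta})e^{-\alpha t}
\]
for the contracting part. The system \eqref{3.comp} for $(q,v)$ has the same structural form as \eqref{1.trunc} but with zero external force and the monotone modified nonlinearity $\tilde f(v)=f(v)+Lv$, so I would carry out an $\bar H^\delta$-level analysis analogous to that of Proposition \ref{Prop3.dis}, treating $\tilde f(v)$ as a perturbation of the linear coupled problem whose dissipative estimate in $\bar H^\delta$ is supplied by Appendix \ref{s4} (estimate \eqref{2.bad} with rate $K_\delta=-\alpha<0$). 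The key nonlinear input would be an analog of \eqref{3.ffrac5}: Lemma \ref{Lem3.w} applied to the elliptic equation for $v$ yields $\|v(t)\|_{L^s}\le C(1+\|q(t)\|_{\bar H^\delta})$, and since $\|v(t)\|_{H^1}$ can be made arbitrarily small for large $t$ by Proposition \ref{Prop3.contr}, the subcritical bound \eqref{3.ffrac3} then gives $\|\tilde f(v(t))\|_{H^{-1+\delta}}\le C\beta\|q(t)\|_{\bar H^\delta}+C_\beta$ for any prescribed $\beta>0$, with $\beta$ ultimately chosen so that $C\beta<\alpha/2$. On the preliminary interval $[0,T_0]$ on which the $H^1$-smallness of $v$ is not yet available, an exponentially growing bound of the type \eqref{3.gr} applied to \eqref{3.comp} keeps $\|q(t)\|_{\bar H^\delta}$ below $Q(\|p(0)\|_{\bar H^\delta})$, after which the Gronwall-perturbation argument produces the exponential decay.

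Assembling the uniform bound on $(r,w)$ with the exponential decay on $(q,v)$ yields \eqref{3.disd}, after recovering $\|v(t)\|_{H^{1+\delta}}$ from $\|q(t)\|_{\bar H^\delta}$ via the fractional elliptic estimate \eqref{3.wr} adapted to \eqref{3.comp}. The main obstacle is closing the $\bar H^\delta$ perturbation loop for the homogeneous system \eqref{3.comp}: the coefficient in front of $\|q(t)\|_{\bar H^\delta}$ inside the nonlinear term must be made small uniformly in time, which requires combining the $H^1$-decay of $v$ from Proposition \ref{Prop3.contr} with the subcritical structure \eqref{3.ffrac3} and the linear dissipative estimate of the appendix. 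Once this is in place, a direct application of the Gronwall inequality delivers the claimed estimate.
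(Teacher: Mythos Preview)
Your approach is correct, but it is more elaborate than what the paper does. You keep the full nonlinear splitting $(p,u)=(q,v)+(r,w)$ with $q(0)=p(0)$ and then run two separate arguments: Proposition~\ref{Prop3.dis} for the smooth part $(r,w)$, and a new $\bar H^\delta$-decay estimate for $(q,v)$ obtained by repeating the perturbation/Gronwall machinery with the smallness of $\|v\|_{H^1}$ supplied by Proposition~\ref{Prop3.contr}. This works, and your handling of the transient interval $[0,T_0]$ via a crude growing bound is the right way to close the loop.

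The paper instead observes that one may simply take $q(t)\equiv v(t)\equiv 0$ in the splitting. Then $(r,w)=(p,u)$ with $r(0)=p(0)\in\bar H^\delta$, and the proof of Proposition~\ref{Prop3.dis} applies \emph{verbatim}: with $v=0$ the nonlinear term to control is just $f(u)$, the splitting $u=\bar u+\tilde u$ of Proposition~\ref{Prop3.split} is already available for $u$ itself, and estimate~\eqref{3.ffrac5} reduces to $\|f(u)\|_{H^{-1+\delta}}\le C\beta\|p\|_{\bar H^\delta}+C_\beta$. The linear bound~\eqref{2.bad} plus Gronwall then gives~\eqref{3.disd} in one stroke. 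So the paper's route avoids the second analysis of the $(q,v)$ system entirely; what your version buys is perhaps a clearer separation between the decaying and the bounded parts, at the cost of essentially doubling the work.
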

Indeed, this estimate can be proved analogously to the proof of Proposition \ref{Prop3.dis}, but
 even simpler since we may take $q(t)=v(t)=0$, so we leave the details to the reader.
 \par
Thus, we have verified that the solution operator $\Cal U(t)$ is well-defined and dissipative in
$\bar H^\delta(\Omega)$ for any $0\le\delta<\frac12$. It also worth to note that all of the
estimates obtained so far uses only that
\begin{equation}\label{7.g}
\|g\|_{L^\infty(R_+,H^{-1+\delta})}\le C,\ \ \delta\in(0,\frac12).
\end{equation}
 The natural next step is to extend this
 result to $\delta=1$ using bootstrapping arguments. The situation here is much simpler than for
 the first step since the nonlinearity $f$ is subcritical in the phase space $\bar H^\delta(\Omega)$,
 so the linear splitting may be used. Moreover, due to the embedding theorem
  $H^{1+\frac15}\subset L^{10}$ and the growth restrictions on $f$, we have
  \begin{equation}\label{3.fcont}
\|f(u)\|_{L^2}\le C(1+\|u\|_{H^{1+\delta}}^5),\ \ \delta\ge\frac15
  \end{equation}
  and, therefore, only one more step of iterations is necessary to reach $\delta=1$. Namely, we split the
   solution $(p,u)$ as follows:
   $$
   p(t)=p_1(t)+p_2(t),\ \ u(t)=u_1(t)+u_2(t),
   $$
where the decaying component $(p_1(t),u_1(t))$ solves
\begin{equation}\label{3.bcon}
\Dt p_1+\divv(Du_1)=0,\ \ -\Dx u_1+\Nx p_1=0,\ \ p_1\big|_{t=0}=p\big|_{t=0}
\end{equation}
and the smooth component $(p_2(t),u_2(t))$ is a solution of
\begin{equation}\label{3.bcom}
\Dt p_2+\divv(Du_2)=0,\ \ -\Dx u_2+\Nx p_2=g(t)-f(u(t)),\ \ p_2\big|_{t=0}=0.
\end{equation}
Then, the following proposition holds.

\begin{proposition}\label{Prop3.boot} Let $\delta\in[\frac15,\frac12)$ and let the initial
data $p(0)$ belongs to the absorbing ball $\Cal B_R^\delta$. Then the following estimates hold
 for the solutions of \eqref{3.bcon} and \eqref{3.bcom}:
 \begin{equation}
\|p_1(t)\|_{\bar H^{\delta}}+\|u_1(t)\|_{H^{1+\delta}}\le C\| p(0)\|_{\bar H^\delta}e^{-\alpha t}
\end{equation}
and
\begin{equation}
\|p_2(t)\|_{\bar H^{1}}+\|u_2(t)\|_{H^{2}}\le
 C\|p(0)\|_{\bar H^\delta}e^{-\alpha t}+C_R(1+\|g\|_{L^\infty(L^2)}),
 \end{equation}
 where $\alpha>0$ and $C$, $C_R$ are independent of $u$, $p$ and $t$.
\end{proposition}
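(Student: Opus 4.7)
The plan is to view \eqref{3.bcon} and \eqref{3.bcom} as the homogeneous and inhomogeneous versions of the linear truncated system studied in Appendix \ref{s4}, and to reduce the statement to the linear dissipativity results already proved there combined with the quintic bound on $f(u)$ that becomes available once $\delta\ge 1/5$.

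First I would treat $(p_1,u_1)$. Equation \eqref{3.bcon} is exactly the homogeneous linear truncated system (no nonlinearity, no external force) with initial pressure $p_1(0)=p(0)\in\bar H^\delta$. The $\bar H^\delta$-dissipativity from Appendix \ref{s4}, namely Corollary \ref{CorA.main} with vanishing external force and $K_\delta=-\alpha<0$, yields the decay $\|p_1(t)\|_{\bar H^\delta}\le C\|p(0)\|_{\bar H^\delta}e^{-\alpha t}$. The bound on $u_1$ then follows by standard elliptic regularity applied to $-\Dx u_1+\Nx p_1=0$ with Dirichlet data, giving $\|u_1\|_{H^{1+\delta}}\le C\|p_1\|_{\bar H^\delta}$, which is the first claim.

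Next I would treat $(p_2,u_2)$. System \eqref{3.bcom} is the linear truncated system with zero initial pressure and external force $h(t):=g(t)-f(u(t))$. The hypothesis $\delta\ge 1/5$ enters through the Sobolev embedding $H^{1+\delta}\subset L^{10}$, which, combined with the quintic growth estimate \eqref{3.fcont}, yields $\|f(u(t))\|_{L^2}\le C(1+\|u(t)\|_{H^{1+\delta}}^5)$. Inserting the dissipative estimate \eqref{3.disd} for the full solution $u$ of \eqref{1.trunc} produces
\begin{equation}
\|h(t)\|_{L^2}\le Q(\|p(0)\|_{\bar H^\delta})e^{-\alpha t}+Q(\|g\|_{L^\infty(\R_+,L^2)}).
\end{equation}
I would then apply the $H^1$-dissipativity of the linear inhomogeneous truncated system from Appendix \ref{s4} to \eqref{3.bcom} with this forcing and vanishing initial data. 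By linearity, the response splits into an exponentially decaying piece controlled by $\|p(0)\|_{\bar H^\delta}$ and a uniformly bounded piece controlled by $1+\|g\|_{L^\infty(L^2)}$, which is exactly the form of the estimate claimed for $p_2$. Finally, elliptic regularity applied to $-\Dx u_2+\Nx p_2=h(t)$ gives $\|u_2\|_{H^2}\le C(\|p_2\|_{\bar H^1}+\|h\|_{L^2})$ and closes the argument.

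The main obstacle, already resolved in Appendix \ref{s4}, is the $H^1$-dissipativity of the linear truncated system: naive energy estimates differentiated in space produce the "bad" boundary integrals on $\p\Om$ mentioned throughout the introduction, and only the localization technique of the appendix bypasses this difficulty. Here I only need to invoke that result; the remaining ingredients, namely Sobolev embeddings, elliptic regularity, and the subcritical character of the nonlinearity in the phase space $\bar H^\delta$ once $\delta\ge 1/5$, are standard, so this final bootstrap step is genuinely routine compared to the critical-case analysis carried out in Propositions \ref{Prop3.contr} and \ref{Prop3.dis}.
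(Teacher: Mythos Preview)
Your argument is correct and coincides with the paper's own justification: the paper simply remarks that the two estimates follow immediately from the linear bound \eqref{2.bad} with $K_\delta=-\alpha<0$ (Corollary~\ref{CorA.main}), the dissipative estimate \eqref{3.disd}, and the quintic control \eqref{3.fcont}, which are precisely the three ingredients you invoke. The additional elliptic-regularity steps you spell out for recovering $u_1$ and $u_2$ from $p_1$ and $p_2$ are implicit in the paper's one-line proof, so there is no substantive difference.
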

Indeed, these estimates follow immediately from estimate \eqref{2.bad} with $K_\delta=-\alpha<0$ for the linear
 equation, dissipative estimate \eqref{3.disd} and estimate \eqref{3.fcont}.
\par
Analogously to Corollary \ref{Cor3.hd}, this result  gives the dissipativity
in the phase space $\bar H^1$.

\begin{corollary}\label{Cor3.h1} Let the assumptions of Corollary \ref{Cor3.exp} hold and let
 $p(0)\in \bar H^1(\Omega)$. Then the following
 dissipative estimate holds for the solution of problem \eqref{1.trunc}:
 \begin{equation}\label{3.dish1}
\|p(t)\|_{\bar H^1}^2+\|u(t)\|^2_{H^{2}}\le
Q(\|p(0)\|_{\bar H^1})e^{-\alpha t}+Q(\|g\|_{L^\infty(\R_+,L^2)})
 \end{equation}
 for some positive $\alpha$ and monotone function $Q$ which are independent of $p$ and $t$.
\end{corollary}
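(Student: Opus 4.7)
The plan is to derive \eqref{3.dish1} as a direct bootstrap from Proposition \ref{Prop3.boot}, once the $\bar H^\delta$ dissipativity of Corollary \ref{Cor3.hd} is combined with the linear estimate \eqref{2.bad} (from Appendix \ref{s4}) applied at the higher regularity level $\delta=1$. The three inputs are: the linear dissipativity of \eqref{3.bcon}--\eqref{3.bcom} in $\bar H^1\times H^2$ with exponential rate $K_1=-\alpha<0$; the growth bound $|f(u)|\le C(1+|u|^5)$ from \eqref{0.f}; and the Sobolev embedding $H^{1+1/5}\subset L^{10}$.

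Setup. Since $p(0)\in\bar H^1(\Omega)\subset\bar H^\delta(\Omega)$ for any $\delta\in(0,\tfrac12)$, Corollary \ref{Cor3.hd} yields the uniform bound
\[
\|p(t)\|_{\bar H^\delta}+\|u(t)\|_{H^{1+\delta}}\le Q(\|p(0)\|_{\bar H^1})e^{-\alpha t}+Q(\|g\|_{L^\infty(L^2)}).
\]
Fixing $\delta=\tfrac15$, the embedding $H^{1+\delta}\subset L^{10}$ together with the quintic growth of $f$ gives
\[
\|f(u(t))\|_{L^2}\le C\bigl(1+\|u(t)\|_{H^{1+\delta}}^{5}\bigr)\le Q(\|p(0)\|_{\bar H^1})+Q(\|g\|_{L^\infty(L^2)}),
\]
so that $\tilde g(t):=g(t)-f(u(t))$ is uniformly bounded in $L^2(\Omega)$ by a quantity of the form needed on the right-hand side of \eqref{3.dish1}.

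Splitting and linear estimates. I then decompose $(p,u)=(p_1,u_1)+(p_2,u_2)$ as in \eqref{3.bcon}--\eqref{3.bcom}. The homogeneous part $(p_1,u_1)$ satisfies the linear problem with $p_1(0)=p(0)\in\bar H^1$, and the linear estimate \eqref{2.bad} applied at level $\delta=1$ yields
\[
\|p_1(t)\|^2_{\bar H^1}+\|u_1(t)\|^2_{H^2}\le Ce^{-\alpha t}\|p(0)\|^2_{\bar H^1}.
\]
The inhomogeneous part $(p_2,u_2)$ has zero initial data and external force $\tilde g(t)$; invoking \eqref{2.bad} at level $\delta=1$ once more,
\[
\|p_2(t)\|^2_{\bar H^1}+\|u_2(t)\|^2_{H^2}\le C\|\tilde g\|^2_{L^\infty(\R_+,L^2)}\le Q(\|p(0)\|_{\bar H^1})+Q(\|g\|_{L^\infty(L^2)}).
\]

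Summing the two inequalities gives \eqref{3.dish1}. The $H^2$ bound on $u$ can also be recovered a posteriori from the stationary equation $-\Dx u=g(t)-\Nx p-f(u)$, regarded as a Dirichlet problem with right-hand side controlled in $L^2(\Omega)$ by the bounds above, and standard $H^2$ elliptic regularity. The only non-routine ingredient is the availability of \eqref{2.bad} at the full regularity $\delta=1$: this is exactly the linear $\bar H^1$-dissipativity proved in Appendix \ref{s4} by the localization technique, which is the step that bypasses the problematic boundary integrals flagged in the introduction. Granted that, the present argument is essentially a single turn of the bootstrap used already in Proposition \ref{Prop3.boot}, with the subcriticality of $f$ in $H^{1+1/5}$ ensuring $f(u)\in L^\infty(L^2)$ so that no further nonlinear splitting is needed.
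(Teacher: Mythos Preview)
Your argument is correct and follows exactly the paper's route: Corollary \ref{Cor3.hd} with $\delta\ge\tfrac15$ places $f(u)$ in $L^\infty(\R_+,L^2)$ via \eqref{3.fcont}, and then the linear $\bar H^1$-dissipativity of Appendix \ref{s4} (estimate \eqref{2.bad} with $K_1=-\alpha<0$, supplied by Corollary \ref{CorA.main}) closes the bootstrap just as you describe. One cosmetic remark: as written, your bound on $\|\tilde g\|_{L^\infty(L^2)}$ carries a non-decaying $Q(\|p(0)\|)$ term, so the sum yields only a uniform bound rather than the precise dissipative form \eqref{3.dish1}; to fix this, retain the exponential decay $\|f(u(t))\|_{L^2}\le Q(\|p(0)\|)e^{-\beta t}+Q(\|g\|)$ that comes directly from Corollary \ref{Cor3.hd} before feeding it into the Duhamel integral for $p_2$.
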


Indeed,  to get this estimate, it is enough to estimate the $L^2$-norm of $f(u)$ using
Corollary \ref{Cor3.hd} and get the desired estimate for the $H^1$-norm from the linear
equation \eqref{2.au} treating $f(u(t))$ as a part of the external forces.
\par
Analogously to Corollary \ref{Cor3.exp} the result of Proposition \ref{Prop3.boot} can be rewritten
 in the following form.

\begin{corollary}\label{Cor3.exp1} Let the  assumptions of Corollary \ref{Cor3.exp} hold. Then, for a
 sufficiently large $R$, the $R$-ball
$\Cal B_R^1$ of radius $R$ in $\bar H^1(\Omega)$ is an exponentially
attracting for the solution
 operator $\Cal U(t)$ in $\bar H^\delta$, i.e., there exists positive constant
  $\alpha>0$ and a monotone
  function $Q$ such that, for every bounded set $B\subset\bar H^\delta(\Omega)$,
\begin{equation}\label{3.exp1}
\dist_{\bar H^\delta}(\Cal U(t)B,\Cal B_R^1)\le Q(\|B\|_{\bar H^\delta})e^{-\alpha t}.
\end{equation}
\end{corollary}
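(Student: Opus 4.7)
The plan is to deduce the corollary directly from the splitting and estimates in Proposition \ref{Prop3.boot}, together with the $\bar H^\delta$ dissipativity proved in Corollary \ref{Cor3.hd}. The overall structure mimics the standard way of turning a linear-splitting decomposition into an exponential-attraction statement: the ``bad'' summand decays in the rough norm, while the ``good'' summand lies in a fixed ball of the smoother space.

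First, I would fix any bounded set $B \subset \bar H^\delta(\Omega)$ with $\|B\|_{\bar H^\delta} \le M$. By the dissipative estimate \eqref{3.disd}, there exists a time $T_0 = T_0(M)$ such that $\Cal U(t) B \subset \Cal B_{R_0}^\delta$ for all $t \ge T_0$, where $R_0$ depends only on $\|g\|_{L^\infty(\R_+,L^2)}$. On the transient interval $[0,T_0]$ the desired exponential estimate is trivial (adjust the prefactor $Q$), so it suffices to prove the attraction statement starting from initial data $p(0) \in \Cal B_{R_0}^\delta$.

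For such initial data, I would apply the decomposition $p(t) = p_1(t) + p_2(t)$, $u(t) = u_1(t) + u_2(t)$ from \eqref{3.bcon}--\eqref{3.bcom} and invoke Proposition \ref{Prop3.boot}. The first estimate there gives
\[
\|p_1(t)\|_{\bar H^\delta} \le C\|p(0)\|_{\bar H^\delta}\,e^{-\alpha t} \le C R_0\, e^{-\alpha t},
\]
while the second gives a uniform bound $\|p_2(t)\|_{\bar H^1} \le C_{R_0}\bigl(1+\|g\|_{L^\infty(L^2)}\bigr) + C R_0$, which is a constant depending only on $R_0$ and $g$. I then choose $R$ in the statement of the corollary at least as large as this constant, so that $p_2(t) \in \Cal B_R^1$ for every $t \ge 0$. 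Since $\Cal B_R^1 \subset \bar H^\delta(\Omega)$, this point is admissible as a ``target'' in the Hausdorff distance.

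Combining these two ingredients,
\[
\dist_{\bar H^\delta}\bigl(\Cal U(t)p(0),\, \Cal B_R^1\bigr) \le \|\Cal U(t)p(0) - p_2(t)\|_{\bar H^\delta} = \|p_1(t)\|_{\bar H^\delta} \le C R_0\, e^{-\alpha t},
\]
which, after absorbing the initial transient from $[0,T_0]$ into the monotone function $Q$, yields \eqref{3.exp1}. The only subtle point, which I expect to be the main (but mild) obstacle, is that the bound on $p_2$ in $\bar H^1$ depends on the absorbing radius $R_0$ rather than on $M$: this forces one to first enter $\Cal B_{R_0}^\delta$ before invoking the splitting, which is precisely what step one above accomplishes. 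Everything else reduces to routine bookkeeping on exponential rates.
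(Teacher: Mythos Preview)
Your proposal is correct and follows essentially the same route as the paper: the paper states Corollary \ref{Cor3.exp1} as an immediate reformulation of the splitting in Proposition \ref{Prop3.boot}, and your argument spells out exactly that---first enter the $\bar H^\delta$ absorbing ball via Corollary \ref{Cor3.hd}, then use $p_1$ as the decaying part and $p_2$ as the $\bar H^1$-bounded part. The only point worth noting is that Proposition \ref{Prop3.boot} is stated for $\delta\in[\tfrac15,\tfrac12)$, so for smaller $\delta$ one should first pass through the $\bar H^{\delta'}$ absorbing ball with some $\delta'\ge\tfrac15$ (which Corollary \ref{Cor3.exp} together with Corollary \ref{Cor3.hd} provides) before invoking the splitting; this is a cosmetic adjustment to your argument.
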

Moreover, using the Lipschitz continuity of $\Cal U(t)$ in $\bar L^2(\Omega)$,
exponential attractions \eqref{3.expd} and \eqref{3.exp1} together with the transitivity
 of exponential attraction (see \cite{FGMZ}), we arrive at the following result.

 \begin{corollary}\label{Cor3.exph1} Let the  assumptions of Corollary \ref{Cor3.exp} hold.
  Then, for a sufficiently
  large $R$, the $R$-ball
$\Cal B_R^1$ of radius $R$ in $\bar H^1(\Omega)$ is an exponentially attracting for the solution
 operator $\Cal U(t)$ in $\bar L^2(\Omega)$, i.e., there exists positive constant $\alpha>0$ and a monotone
  function $Q$ such that, for every bounded set $B\subset\bar L^2(\Omega)$,
\begin{equation}\label{3.exph1}
\dist_{\bar L^2}(\Cal U(t)B,\Cal B_R^1)\le Q(\|B\|_{\bar L^2})e^{-\alpha t}.
\end{equation}
\end{corollary}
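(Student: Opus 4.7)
The proof is a direct application of the transitivity lemma for exponential attraction from \cite{FGMZ}, and I would assemble the three ingredients it requires. First, I would verify the global Lipschitz continuity of $\Cal U(t)$ in $\bar L^2(\Omega)$ by repeating the argument of Theorem \ref{Th1.un} in the truncated setting; the absence of the inertial term $\Dt u$ only simplifies the estimates, and the Bogovski-type control of $\bar p$ via $\divv(D\bar u)$ still closes the Gronwall argument. Second, Corollary \ref{Cor3.exp} supplies the $\bar L^2$-exponential attraction of bounded subsets of $\bar L^2(\Omega)$ toward $\Cal B_R^\delta$. Third, Corollary \ref{Cor3.exp1} supplies the $\bar H^\delta$-exponential attraction of bounded subsets of $\bar H^\delta(\Omega)$ toward $\Cal B_R^1$.

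To combine them, I would split time as $t=s+(t-s)$ with $s=\theta t$ and $\theta\in(0,1)$ to be optimized. For any $p_0\in B\subset\bar L^2(\Omega)$, Corollary \ref{Cor3.exp} produces a $q_0\in\Cal B_R^\delta$ with $\|\Cal U(s)p_0-q_0\|_{\bar L^2}\le Q(\|B\|_{\bar L^2})e^{-\alpha s}$. Propagating this closeness through the remaining time interval by the Lipschitz estimate gives
\begin{equation*}
\|\Cal U(t)p_0-\Cal U(t-s)q_0\|_{\bar L^2}\le Ce^{K(t-s)}Q(\|B\|_{\bar L^2})e^{-\alpha s}.
\end{equation*}
Separately, Corollary \ref{Cor3.exp1} applied to the bounded set $\Cal B_R^\delta\subset\bar H^\delta(\Omega)$, together with the continuous embedding $\bar H^\delta(\Omega)\hookrightarrow\bar L^2(\Omega)$, yields
\begin{equation*}
\dist_{\bar L^2}(\Cal U(t-s)q_0,\Cal B_R^1)\le C_1 e^{-\alpha_1(t-s)}.
\end{equation*}

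The triangle inequality then bounds $\dist_{\bar L^2}(\Cal U(t)p_0,\Cal B_R^1)$ by the sum of these two terms. Choosing $\theta$ close enough to $1$ that $K(1-\theta)<\alpha\theta$, which is possible since $\alpha>0$, makes both exponents $-\alpha\theta t+K(1-\theta)t$ and $-\alpha_1(1-\theta)t$ strictly negative and linear in $t$; the smaller of the two rates, call it $\alpha'$, delivers the estimate \eqref{3.exph1} after taking the supremum over $p_0\in B$. I do not expect any serious obstacle beyond the Lipschitz verification in item one: the heart of the matter, namely transitivity of exponential attraction across two metrics linked by Lipschitz continuity in the weaker one, is by now a standard tool from \cite{FGMZ}, and the two attraction statements it consumes are already in hand.
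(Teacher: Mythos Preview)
Your proposal is correct and follows exactly the approach the paper indicates: combine the Lipschitz continuity of $\Cal U(t)$ in $\bar L^2(\Omega)$ with the two exponential attractions \eqref{3.expd} and \eqref{3.exp1} via the transitivity lemma of \cite{FGMZ}. In fact you have spelled out the time-splitting argument behind that lemma in more detail than the paper itself, which simply cites \cite{FGMZ} for this step.
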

We conclude this section by translating the obtained results for the truncated system \eqref{1.trunc} to the
 initial problem \eqref{1.main}. The next result can be considered as the main result of this section.

\begin{theorem}\label{Th3.main} Let the  assumptions of Theorem \ref{Th1.E-dis} hold. Then the $R$-ball $\Bbb B_R^1$ in
 the higher energy space
 $$
 E^1:=[H^2(\Omega)\cap H^1_0(\Omega)]\times \bar H^1(\Omega)
 $$
 is an exponentially attracting set for the solution semigroup $S(t): E\to E$ generated by the problem
  \eqref{1.main} if $R$ is large enough, i.e., there exists $\alpha>0$ and monotone $Q$
  such that, for every bounded set $B\subset E$,
  \begin{equation}\label{3.mainattr}
\dist_E(S(t)B,\Bbb B_R^1)\le Q(\|B\|_E)e^{-\alpha t}.
  \end{equation}
Moreover, the problem \eqref{1.main} is well-posed and dissipative in the space $E^1$ as well,
i.e., if $(u(0),p(0))\in E^1$ then the following estimate holds:
\begin{equation}\label{3.maindis}
\|(u(t),p(t))\|_{E^1}\le Q(\|u(0),p(0))\|_{E^1})e^{-\alpha t}+Q(\|g\|_{L^2})
\end{equation}
for some positive $\alpha$ and monotone $Q$.
\end{theorem}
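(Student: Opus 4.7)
The plan is to reduce the problem to the truncated system \eqref{1.trunc} already analysed in this section. The bridge is Corollary \ref{Cor1.smdt}: for any bounded set $B \subset E$ and any $t \ge 1$, the partial smoothing estimate guarantees that $\|\Dt u(t)\|_{L^2}$ is uniformly bounded by some constant depending only on $\|B\|_E$ and $\|g\|_{L^2}$. Consequently, setting
\begin{equation}
 g_u(t) := g - \Dt u(t),
\end{equation}
the reinterpreted function $g_u$ belongs to $L^\infty([1,\infty); L^2(\Omega))$ with a bound depending only on $\|B\|_E$. Therefore, for $t \ge 1$, the pair $(u(t), p(t))$ is exactly a trajectory of the truncated system \eqref{1.trunc} with external force $g_u$ (and the possible shift $f \mapsto f + Lu$, $g \mapsto g + Lu$ used in Section \ref{s1} to ensure \eqref{1.fmon}), starting from $(u(1), p(1))$ which is bounded in $E$ uniformly in $B$.

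From this point the exponential attraction part follows in two steps. First, Corollary \ref{Cor3.exph1} applied to this trajectory yields
\begin{equation}
 \dist_{\bar L^2}(p(t), \Cal B_R^1) \le Q(\|B\|_E)\, e^{-\alpha(t-1)},
\end{equation}
so the $p$-component is exponentially close in $\bar L^2$ to a ball in $\bar H^1$. Second, I need to upgrade this to attraction in $E$, i.e.\ to $H^1$-attraction for the $u$-component towards the corresponding ball in $H^2$. For this, pick $\tilde p \in \Cal B_R^1$ realising the distance above and let $\tilde u$ be the unique solution of $-\Dx \tilde u + \Nx \tilde p + f(\tilde u) = g_u(t)$ furnished by Lemma \ref{Lem3.w}; elliptic regularity then places $\tilde u$ in $H^2$ with norm bounded by $R$ and $\|g_u\|_{L^\infty(L^2)}$, so the pair $(\tilde u, \tilde p)$ lies in a bounded ball $\Bbb B_R^1 \subset E^1$. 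Subtracting the equation for $\tilde u$ from that for $u(t)$, testing with $u(t) - \tilde u$, and invoking $f'(u) \ge 0$ (after the shift) gives
\begin{equation}
 \|\Nx(u(t) - \tilde u)\|_{L^2}^2 \le C\|p(t) - \tilde p\|_{\bar L^2}^2,
\end{equation}
which combined with the previous step yields \eqref{3.mainattr}.

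For the well-posedness and dissipativity in $E^1$, the key observation is that if $(u_0, p_0) \in E^1$ then writing the first equation of \eqref{1.main} at $t=0$ gives
\begin{equation}
 \Dt u(0) = \Dx u_0 - \Nx p_0 - f(u_0) + g,
\end{equation}
and the embedding $H^2(\Omega) \subset L^\infty(\Omega)$ in 3D together with the growth condition \eqref{0.f} ensures that every term on the right lies in $L^2(\Omega)$ with a norm controlled by $\|(u_0,p_0)\|_{E^1}$. Thus $\Dt u \in L^\infty_{\mathrm{loc}}(L^2)$ from $t=0$, the reduction to the truncated system \eqref{1.trunc} is valid on the whole half-line, and Corollary \ref{Cor3.h1} delivers the dissipative bound \eqref{3.maindis}. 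The main technical point I expect to require care is the last step of the attraction argument, namely propagating the $\bar L^2$-smallness of $p(t) - \tilde p$ into $H^1$-smallness of $u(t) - \tilde u$ uniformly in the (time-dependent) linearisation of $f$; monotonicity of the shifted nonlinearity handles this cleanly, but it is the place where the overall argument is not purely a citation chain.
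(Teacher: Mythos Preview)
Your proposal is correct and follows essentially the same route as the paper: reduce to the truncated system via Corollary~\ref{Cor1.smdt}, invoke Corollary~\ref{Cor3.exph1} for the $p$-component, and for the $E^1$-dissipativity observe that $(u_0,p_0)\in E^1$ gives $\Dt u(0)\in L^2$ so the truncation works from $t=0$ and Corollary~\ref{Cor3.h1} applies. The paper's proof is terser---it simply declares that \eqref{3.mainattr} ``follows immediately'' from Corollaries~\ref{Cor3.exph1} and~\ref{Cor1.smdt}---whereas you spell out the recovery of the $u$-component in $H^1$ via the monotone elliptic comparison, which is exactly the missing step behind that ``immediately.'' One small wording issue: Lemma~\ref{Lem3.w} gives an a~priori estimate, not existence; the solvability of the elliptic problem for~$\tilde u$ is a separate (standard) monotone-operator fact, and the $H^2$-bound for~$\tilde u$ needs the short bootstrap through $H^{1+\delta}$ and \eqref{3.fcont} rather than a direct appeal to elliptic regularity.
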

\begin{proof} Indeed, the exponential attraction \eqref{3.mainattr} follows immediately
from Corollary \ref{Cor3.exph1} and smoothing property of Corollary \ref{Cor1.smdt}.
\par
To get the dissipative estimate \eqref{3.maindis}, we note that if the initial data $(u(0),p(0))\in E^1$,
we have from equations \eqref{1.main} that
$$
\|u(0)\|_{C}+\|\Dt u(0)\|_{L^2}+\|\Dt p(0)\|_{\bar L^2}\le Q(\|(u(0),p(0))\|_{E^1}),
$$
so, we need not to use multiplication by $t$ and $t^2$ in the estimates given in the proof
 of Theorem \ref{Th1.sm-ins} in order to remove the initial data and this gives us better analogue
  of estimate \eqref{1.ins-sm}:
\begin{equation}\label{3.dtdis}
\|\Nx u(t)\|_{L^2}+\|\Dt u(t)\|_{L^2}\le Q(\|(u(0),p(0))\|_{E^1})e^{-\alpha t}+Q(\|g\|_{L^2}).
\end{equation}
This, in turn, allows to use the truncated system \eqref{1.trunc} starting from $t=0$. Then the
 desired dissipative estimate follows from the analogous estimate \eqref{3.dish1} for the truncated
  system. Thus, the theorem is proved.
\end{proof}

\section{Attractors}\label{s3}

In this section, we use the results obtained above for constructing global and exponential
attractors for problem \eqref{1.main}. We start with a global attractor.

\begin{definition}\label{Def4.attr} Let $S(t):E\to E$, $t\ge0$ be a semigroup. Then, a set
$\Cal A\subset E$ is a global attractor for $S(t)$ in $E$ if
\par
1. $\Cal A$ is compact in $E$;
\par
2. $\Cal A$ is strictly invariant, i.e., $S(t)\Cal A=\Cal A$ for all $t\ge0$.
\par
3. $\Cal A$ is an attracting set for $S(t)$ in $E$. The latter means that for every bounded set
 $B$ in $E$ and every neighbourhood $\Cal O(\Cal A)$ of the set $\Cal A$ there exist
 $T=T(B,\Cal O)$ such that
\begin{equation}
S(t)B\subset \Cal A,\ \ \forall t\ge T.
\end{equation}
If $S(t)$ is a solution semigroup related with an evolutionary equation, then the attractor $\Cal A$ of
 $S(t)$ is often called and attractor of this evolutionary equation, see \cite{BV,cv,Lad,MZ,Tem} for more details.
\end{definition}

\begin{theorem}\label{Th4.attr} Let the  assumptions of Theorem \ref{Th1.E-dis} hold. Then equation \eqref{1.main} possesses an
attractor $\Cal A$ in $E$ which is a bounded set of $E^1$. Moreover, this attractor possesses the
 following description:
\begin{equation}\label{4.rep}
\Cal A=\Cal K\big|_{t=0},
\end{equation}
where $\Cal K\subset L^\infty(\R,E)$ is a set of all complete (=defined for all $t\in\R$) bounded in $E$
solutions of equation \eqref{1.main}.
\end{theorem}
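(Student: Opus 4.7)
The plan is to apply the standard abstract theorem on the existence of global attractors (see e.g., \cite{BV,cv,MZ,Tem}), which requires three ingredients: (i) that $S(t)$ is a continuous semigroup on $E$, (ii) that $S(t)$ possesses a compact attracting set in $E$, and (iii) a characterization of the attractor via complete bounded trajectories.

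First, I would observe that by Corollary \ref{Cor1.sem} the semigroup $S(t):E\to E$ is well-defined and globally Lipschitz continuous on bounded sets, so the continuity condition is immediate. The key input is Theorem \ref{Th3.main}, which asserts that the $R$-ball $\Bbb B_R^1$ in the higher energy space $E^1=[H^2(\Omega)\cap H^1_0(\Omega)]\times \bar H^1(\Omega)$ is an exponentially attracting set for $S(t)$ in $E$. Since the embedding $E^1\subset E$ is compact (by Rellich's theorem, $H^2\cap H^1_0\Subset H^1_0$ and $\bar H^1\Subset \bar L^2$), the set $\Bbb B_R^1$ is a compact attracting set for $S(t)$ in $E$. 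In particular, \eqref{3.mainattr} implies that for every bounded $B\subset E$ and every neighbourhood $\Cal O(\Bbb B_R^1)$, there exists $T=T(B,\Cal O)$ such that $S(t)B\subset \Cal O(\Bbb B_R^1)$ for all $t\ge T$, which is the standard attraction property required.

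Having continuity plus a compact attracting set, the abstract theorem yields the existence of a global attractor $\Cal A\subset E$, defined as the $\omega$-limit set
\begin{equation}
\Cal A=\bigcap_{s\ge 0}\overline{\bigcup_{t\ge s}S(t)\Bbb B_R^1}^{\,E},
\end{equation}
which is compact in $E$, strictly invariant, and attracts every bounded subset of $E$. Since $\Cal A\subset \Bbb B_R^1$ (the attractor always lies inside any compact attracting set after taking closure of its $\omega$-limit), we conclude that $\Cal A$ is a bounded subset of $E^1$, giving the asserted additional regularity.

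Finally, the representation formula \eqref{4.rep} is a standard consequence of the strict invariance: any point $\xi\in\Cal A$ generates a complete bounded trajectory by backward extension (using strict invariance $S(t)\Cal A=\Cal A$ to pick preimages) and forward evolution, and conversely any $u(\cdot)\in\Cal K$ satisfies $u(0)\in\Cal A$ because the trajectory is bounded in $E$ and $\Cal A$ attracts it. I do not anticipate any substantive obstacle here; every nontrivial analytic input (dissipativity in $E$, Lipschitz dependence, smoothing, and exponential attraction by the $E^1$-ball) has already been established in the preceding sections, and the remaining argument is a direct appeal to the abstract machinery.
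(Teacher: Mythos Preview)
Your proposal is correct and follows essentially the same approach as the paper: verify continuity of $S(t)$ (via Theorem~\ref{Th1.un}/Corollary~\ref{Cor1.sem}), exhibit the compact attracting set $\Bbb B_R^1$ from Theorem~\ref{Th3.main}, and invoke the abstract attractor existence theorem, noting that $\Cal A$ lies in $\Bbb B_R^1$ and hence is bounded in $E^1$. The paper's proof is slightly terser but relies on exactly the same ingredients and structure.
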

\begin{proof} According to the abstract attractor's existence theorem, see e.g., \cite{BV}, we need to
verify two properties:
\par
1. The operators $S(t)$ are continuous for every frxed $t$ as operators from $E$ to $E$;
\par
2. The semigroup $S(t)$ possesses a compact attracting set in $E$.
\par
The first property is verified in Theorem \ref{Th1.un} and the second
 one follows from Theorem \ref{Th3.main}. Since the attractor is always a subset of
  a compact attracting set, we get the boundedness of $\Cal A$ in $E^1$ and the
   representation formula \eqref{4.rep} also follows from the abstract attractor's
    existence theorem. Thus, the theorem is proved.
\end{proof}
 We now turn to exponential attractors. These objects have been introduced in \cite{EFNT} in order
to overcome the major drawback of the theory of global attractors, namely, the fact that the
rate of attraction to a global attractor may be arbitrarily slow and that there is no way in general
 to control this rate of attraction in terms of physical parameters of the considered equation. This makes
  the global attractor sensitive to perturbations and it becomes in a sense unobservable
   in finite-time simulations, see \cite{EFNT,EMZ00,EMZ05,MZ} for more details. We start with the formal definition.

   \begin{definition}\label{Def4.exp} A set $\Cal M\subset E$ is an exponential attractor for
   the semigroup $S(t): E\to E$, $t\ge0$, if
   \par
   1. $\Cal M$ is a compact set in $E$;
   \par
   2. $\Cal M$ is semi-invariant $S(t)\Cal M\subset\Cal M$ for $t\ge0$;
   \par
   3. $\Cal M$ has a finite box-counting dimension in $E$:
   $$
   \dim_F(\Cal A,E)\le C<\infty;
   $$
   \par
   4. There exist positive constant $\alpha$ and monotone function $Q$ such that, for every
   bounded set $B\subset E$, we have
\begin{equation}\label{4.expa}
\dist_E(S(t)B,\Cal M)\le Q(\|B\|_E)e^{-\alpha t}
\end{equation}
for all $t\ge0$.
   \end{definition}
The next theorem can be considered as the main result of this section.

\begin{theorem}\label{Th4.main} Let the assumptions of Theorem \ref{Th1.E-dis} hold. Then equation \eqref{1.main}
 possesses an exponential attractor $\Cal M$ in $E$ which is a bounded set in the space $E^1$.
\end{theorem}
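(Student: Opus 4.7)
The plan is to apply the abstract exponential attractor construction of \cite{EMZ05,MZ}, whose essential hypotheses are a compact positively invariant absorbing set in $E$ together with a smoothing estimate for differences of trajectories on that set, and then to pass from the discrete time step to continuous time in the standard way. In our situation the compact absorbing set is provided almost for free by Theorem \ref{Th3.main}: fixing $R$ large and $T_0>0$ large enough, the set $\Cal B$ defined as the closure in $E$ of $\bigcup_{t\ge T_0}S(t)\Bbb B_R^1$ is bounded in $E^1$, positively invariant under $S(t)$, compact in $E$ by the compact embedding $E^1\hookrightarrow E$, and absorbs every bounded set of $E$ in finite time.

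The heart of the argument is a smoothing estimate for differences: for a suitable $T_*>0$ and every $\xi_1,\xi_2\in\Cal B$,
\begin{equation}\label{sm-diff}
\|S(T_*)\xi_1-S(T_*)\xi_2\|_{E^1}\le L\,\|\xi_1-\xi_2\|_E,
\end{equation}
with $L$ depending only on $\Cal B$. Writing the difference system for $\bar u:=u_1-u_2$, $\bar p:=p_1-p_2$,
$$
\Dt\bar u-\Dx\bar u+\Nx\bar p+[f(u_1)-f(u_2)]=0,\qquad \Dt\bar p+\divv(D\bar u)=0,
$$
I would exploit that both trajectories $u_i(t)$ are uniformly bounded in $H^2(\Omega)\subset L^\infty(\Omega)$ for all $t\ge0$, so that the coefficient $\int_0^1 f'(\sigma u_1+(1-\sigma)u_2)\,d\sigma$ appearing in $f(u_1)-f(u_2)$ is uniformly bounded. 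The difference problem then reduces to a linear system with bounded coefficients, and \eqref{sm-diff} is obtained by splitting $\bar u=\bar v+\bar w$, $\bar p=\bar q+\bar r$ in the spirit of Propositions \ref{Prop3.contr}-\ref{Prop3.dis}, but now on the \emph{linear} level: $(\bar v,\bar q)$ inherits the Lipschitz estimate of Theorem \ref{Th1.un} in $E$, while the remainder $(\bar w,\bar r)$ solves a linear problem whose right-hand side is controlled by $\|\xi_1-\xi_2\|_E$, and whose regularity is then lifted to $E^1$ by the linear $H^1$-dissipativity proved in Appendix \ref{s4}, combined with the instantaneous smoothing of Corollary \ref{Cor1.smdt} applied to each of $u_1,u_2$.

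Once \eqref{sm-diff} is in hand, the abstract discrete theorem yields a set $\Cal M_d\subset\Cal B$ which is compact in $E$, positively invariant under $S(T_*)$, has finite box-counting dimension in $E$, and exponentially attracts $\Cal B$ under iteration of $S(T_*)$. The continuous-time exponential attractor is then
$$
\Cal M:=\bigcup_{t\in[0,T_*]}S(t)\Cal M_d,
$$
whose three defining properties transfer from those of $\Cal M_d$ once one knows that $t\mapsto S(t)\xi$ is Hölder continuous in $E$ uniformly for $\xi\in\Cal B$. This last fact follows immediately from \eqref{1.main} and the $E^1$-dissipative estimate \eqref{3.maindis}, which bound $\Dt u$ in $L^2$ and $\Dt p$ in $\bar L^2$ along every trajectory starting in $\Cal B$. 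The boundedness of $\Cal M$ in $E^1$ is inherited from $\Cal B\subset E^1$.

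The main obstacle will be the smoothing estimate \eqref{sm-diff}: the quintic growth of $f$ is critical for the energy space $E$, which is exactly the reason the nonlinear decomposition of Section \ref{s2} was needed for individual trajectories. The crucial simplification here is that we are restricted to $\Cal B\subset E^1$, so the uniform $L^\infty$-bound on $u_i$ tames the nonlinearity and reduces the smoothing for differences to the linear theory of Appendix \ref{s4}, avoiding any further nonlinear splitting.
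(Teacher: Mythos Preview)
Your overall architecture matches the paper's: restrict to an $E^1$-bounded, $E$-compact, positively invariant set, build a discrete exponential attractor there, pass to continuous time via \eqref{4.expdc}, and use transitivity with \eqref{3.mainattr} to reach all of $E$. You also isolate the same key simplification the paper uses, namely that on an $E^1$-bounded set the matrix $l(t)=\int_0^1 f'(\sigma u_1+(1-\sigma)u_2)\,d\sigma$ is uniformly bounded in $L^\infty$, so the difference equation behaves linearly.

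The gap is in the smoothing step. The estimate \eqref{sm-diff} you write down is the \emph{squeezing} form $\|S(T_*)\xi_1-S(T_*)\xi_2\|_{E^1}\le L\|\xi_1-\xi_2\|_E$, but the argument you sketch is a \emph{splitting} $(\bar v,\bar q)+(\bar w,\bar r)$ with $(\bar v,\bar q)$ merely Lipschitz in $E$ and $(\bar w,\bar r)$ in $E^1$; adding these gives only an $E$-bound, not \eqref{sm-diff}. More seriously, \eqref{sm-diff} as stated cannot hold: the $p$-component has no instantaneous smoothing, so if $\bar p(0)\in\bar L^2\setminus H^1$ then $\bar p(T_*)=\bar p(0)-\int_0^{T_*}\divv(D\bar u)\,ds$ is generically not in $H^1$, since the initial datum is never erased. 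The splitting framework (Lemma~\ref{Lem4.expattr}) requires the first piece to be \emph{contracting}, not just Lipschitz; Theorem~\ref{Th1.un} gives only the latter.

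The paper resolves this by working one scale down: it takes the base space of Lemma~\ref{Lem4.expattr} to be $L^2\times\bar L^2$ (not $E$) and $V=H^1_0\times\bar H^1$ (not $E^1$). The contracting piece $(\hat u,\hat p)$ solves the homogeneous linear system \eqref{4.hat}, for which exponential decay in $L^2\times\bar L^2$ is a two-line energy/Bogovski computation; the smooth piece $(\tilde u,\tilde p)$ solves \eqref{4.tilde} with forcing $-l(t)\bar u\in L^\infty(L^2)$, and an elementary (exponentially growing) $H^1\times H^1$ estimate suffices. Attraction in $E$ is then recovered a posteriori by interpolating the $L^2\times\bar L^2$ attraction against the uniform $E^1$-bound, together with transitivity. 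Your route could be repaired by downgrading the target spaces in exactly this way and replacing ``Lipschitz'' by ``contracting'' for the first piece.
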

\begin{proof} Following the general strategy, see \cite{EMZ00,EMZ05,FGMZ,MZ},
 we first construct a {\it discrete}
 exponential attractor $\Cal M_d\subset E^1$ for the semigroup $S_n=S_1^n$ generated by the map $S(T)$
  restricted to the  $R$-ball $\Cal B_R^1$ in $E^1$. Here we fix $T>0$ in such a way that
  $$
  S(T):\Bbb B_R^1\to\Bbb B_R^1.
  $$
  It is possible to do due to estimate \eqref{3.maindis}. If the discrete attractor $\Cal M_d$
   is constructed its continuous analogue $\Cal M\subset E^1$ is given by the standard formula
\begin{equation}\label{4.expdc}
   \Cal M:=\cup_{t\in[0,T]}S(t)\Cal M_d.
\end{equation}
   This, together with \eqref{3.maindis} gives us the attraction property in $E$ for all
    bounded sets of $E^1$. Combining this with the exponential attraction \eqref{3.mainattr} and
     transitivity of exponential attraction (see \cite{FGMZ}), we get the desired exponential attraction of
      any bounded set in $E$. The semi-invariance follows immediately from semi-invariance of
       a discrete attractor and the explicit formula \eqref{4.expdc}. The compactness and
       finite-dimensionality also follow from \eqref{4.expdc} if we know, in addition,
       that $(t,\xi)\to S(t)\xi$ is Lipschitz (or H\"older) continuous as a map
        from $[0,T]\times\Cal M_d\to E$. The Lipschitz continuity with respect
         to the initial data is verified in Theorem \ref{Th1.un} and the Lipschitz continuity
          in times follows from the fact that $\|\Dt u(t)\|_{L^2}$ and $\|\Dt p(t)\|_{\bar L^2}$
          are uniformly bounded on $\Bbb B_R^1$ (due to estimate \eqref{3.dtdis}.
           Thus, we only need to verify the existence of a discrete exponential attractor
           $\Cal M_d$ on a set $\Bbb B_R^1$. To this end, we need the following standard
           result on the existence of exponential attractors, see \cite{EMZ00,EMZ05,MZ}.

\begin{lemma}\label{Lem4.expattr} Let $E$ and $V$ be two B-spaces such that $V$ is compactly embedded in $E$ and
 let $\Bbb B\subset E$ be a bounded set in $E$. Assume also that we are given a map
  $S:\Bbb B\to\Bbb B$ such that, for every two points $\xi_1,\xi_2\in\Bbb B$, we have a splitting
  \begin{equation}\label{4.sp}
S(\xi_1)-S(\xi_2)=\hat\xi+\tilde\xi,
  \end{equation}
where
\begin{equation}\label{4.con}
\|\hat\xi\|_{E}\le\kappa\|\xi_1-\xi_2\|_E
\end{equation}
for some $\kappa<\frac12$ and
\begin{equation}\label{4.smo}
\|\tilde\xi\|_{V}\le K\|\xi_1-\xi_2\|_E,
\end{equation}
where $\kappa$ and $K$ are independent of $\xi_1$ and $\xi_2$. Then the discrete semigroup
 generated by iterations of the map $S$ possesses an exponential
  attractor $\Cal M_s\subset B$ on $B\subset E$.
\end{lemma}
To apply this lemma, we need to split the solution $(\bar u(t),\bar p(t))$ of system
 \eqref{1.dif} for differences of two solutions of system \eqref{1.main} on a sum of
 contracting $(\hat u(t),\hat u(t))$
 and  smoothing $(\tilde u(t),\tilde p(t))$ components. The first part will solve
 the homogeneous linear system:
\begin{equation}\label{4.hat}
\begin{cases}
\Dt\hat u-\Dx\hat u+\Nx\hat p=0,\ \ \hat u\big|_{t=0}=\bar u\big|_{t=0},\\
\Dt\hat p+\divv(D\hat u)=0,\ \ \hat p\big|_{t=0}=\bar p\big|_{t=0}
\end{cases}
\end{equation}
and the smoothing component is taken as a solution of
\begin{equation}\label{4.tilde}
\Dt\tilde u-\Dx\tilde u+\Nx\tilde p=-l(t)\bar u,\ \ \Dt\tilde p+\divv(D\tilde u)=0,\ \
\tilde u\big|_{t=0}=\tilde p\big|_{t=0}=0,
\end{equation}
where $l(t):=\int_0^1f'(\tau u_1+(1-\tau u_2))\,d\tau$. We recall that, according to Theorem~\ref{Th1.un},
\begin{equation}\label{4.lip}
\|\bar u(t)\|_{L^2}+\|\bar p(t)\|_{\bar L^2}\le Ce^{Kt}\(\|\bar u(0)\|_{L^2}+\|\bar p(0)\|_{\bar L^2}\).
\end{equation}
Moreover, since $(u_i(0),p_i(0))\in\Bbb B_R^1$, $i=1,2$,  due to \eqref{3.dtdis} the $C$-norm of $u_i(t)$ is
uniformly bounded and, therefore,
\begin{equation}\label{4.per}
\|l(t)\bar u(t)\|_{L^2}\le Ce^{Kt}\(\|\bar u(0)\|_{L^2}+\|\bar p(0)\|_{\bar L^2}\),
\end{equation}
so the term $l(t)\bar u(t)$ can be treated as an  external force. Estimates \eqref{4.con} and
 \eqref{4.smo} are verified in the next two lemmas.

 \begin{lemma} Let the above assumptions hold. Then, the solution $(\hat u(t),\hat p(t))$
  of problem \eqref{4.hat} satisfies the estimate:
  \begin{equation}\label{4.decc}
  \|\hat u(t)\|^2_{L^2}+\|\hat p(t)\|^2_{\bar L^2}\le
  Ce^{-\alpha t}\(\|\bar u(0)\|_{L^2}^2+\|\bar p(0)\|^2_{\bar L^2}\),
  \end{equation}
where the positive constants $C$ and $\alpha$ are independent of $u_i$ and $p_i$.
 \end{lemma}
 \begin{proof}[Proof of the lemma] Indeed, multiplying the first equation of \eqref{4.hat}
 by $D\hat u$ integrating with respect to  $x$ and using the second equation, we arrive at
 $$
 \frac12\frac d{dt}\(\|\hat u(t)\|^2_{L^2_D}+\|\hat p(t)\|^2_{\bar L^2}\)+\|\Nx u(t)\|^2_{L^2_D}=0.
 $$
 Moreover, multiplying the first equation on $-\mathfrak B\hat p(t)$ and
 using again the second equation we get
 $$
-\frac d{dt}(\hat u(t),\mathfrak B\hat p(t))+\|\hat p\|^2_{\bar L^2}-(\hat u(t),\mathfrak B\divv(D\hat u(t))=0.
$$
Multiplying this equation by small positive $\eb$ and taking a sum with the previous equation,
we finally get
$$
\frac12\frac d{dt}\(\|\hat u(t)\|^2_{L^2_D}-2\eb(\hat u(t),\mathfrak B\hat p(t))+\|\hat p(t)\|^2_{\bar L^2}\)+
\alpha\|\hat u(t)\|^2_{L^2_D}+\eb\|\hat p(t)\|^2_{\bar L^2}\le0
$$
for some positive $\alpha$. The Gronwall inequality applied to this relation gives the desired
 result if $\eb>0$ is small enough. Thus, the lemma is proved.
 \end{proof}

\begin{lemma}Let the above assumptions hold. Then, the solution $(\tilde u(t),\tilde p(t))$
 satisfies the following estimate:
\begin{equation}\label{4.sms}
\|\tilde u(t)\|_{H^1}^2+\|\tilde p(t)\|_{\bar H^1}^2\le Ce^{Kt}
\(\|\bar u(0)\|^2_{L^2}+\|\bar p(0)\|^2_{\bar L^2}\),
\end{equation}
where the constants $C$ and $K$ depend on $R$, but are independent of $u_i$ and~$p_i$.
\end{lemma}
\begin{proof}[Proof of the lemma] Indeed, multiplying the second equation of \eqref{4.tilde}
 by $\Dx \tilde u$ and using \eqref{4.per}, we get
 $$
 \frac12\frac d{dt}\|\tilde u(t)\|^2_{H^1}+\|\Dx\tilde u(t)\|^2_{H^2}
 \le C\|\tilde p(t)\|^2_{H^1}+
 Ce^{Kt}\(\|\bar u(0)\|^2_{L^2}+\|\bar p(0)\|^2_{L^2}\).
$$
Taking now $\Nx$ from the both sides of the second equation of \eqref{4.tilde} and multiplying it
 by $\Nx \tilde p(t)$, we arrive at
 $$
 \frac12\frac d{dt}\|\Nx \tilde p(t)\|^2_{L^2}\le \|\Dx\tilde u(t)\|^2_{L^2}+C\|\Nx \tilde p(t)\|^2_{L^2}.
 $$
Taking a sum of the obtained inequalities, we finally infer that
\begin{multline}
\frac12\frac d{dt}\(\|\Nx \tilde u(t)\|^2_{L^2}+\|\Nx \tilde p(t)\|^2_{L^2}\)\\\le
 C\|\Nx\tilde p(t)\|^2_{L^2}+Ce^{Kt}\(\|\bar u(0)\|^2_{L^2}+\|\bar p(0)\|^2_{\bar L^2}\)
\end{multline}
and the Gronwall inequality applied to this relation finishes the proof of the lemma.
\end{proof}
We are now ready to complete the proof of the theorem. Indeed, estimates \eqref{4.decc} and
\eqref{4.sms} guarantee that the assumptions of Lemma \ref{Lem4.expattr} are satisfied if we take
$$
V:=H^1_0(\Omega)\times \bar H^1(\Omega)
$$
and fix $T$ big enough that $Ce^{-\alpha T}<\frac12$. Thus, the discrete exponential
 attractor $\Cal M_d$ is constructed and the desired continuous exponential attractor $\Cal M$
 can be constructed via \eqref{4.expdc} as explained above. Therefore, the theorem is proved.
\end{proof}

\section{Generalizations and concluding remarks}\label{s7}

In this section, we briefly discuss the so-called Navier-Stokes-Brinkman-Forchheimer
equation in the following form:
\begin{equation}\label{7.NS}
\begin{cases}
\Dt u+B(u,u)-\Dx u+\Nx p+f(u)=g,\ u\big|_{\partial\Omega}=0,\ u\big|_{t=0}=u_0,\\
\Dt p+\divv(u)=0,\ p\big|_{t=0}=p_0,
\end{cases}
\end{equation}
where
\begin{equation}\label{7.B}
B(u,v)=(u,\Nx)v+\frac12\divv(u)v.
\end{equation}
The extra term $\frac12 \divv(u)u$ is added to the standard Navier-Stokes inertial term in
order to preserve the energy identity, see \cite{FG,GT,Tem1} and references therein. Indeed,
in this case we have
$$
(B(u,v),v)\equiv 0,\ \ \forall u,v\in H^1_0(\Omega)
$$
and we have the energy identity \eqref{1.eq} with $D=1$ exactly as in the case $B=0$
considered above, namely,
\begin{equation}\label{7.eq}
\frac12\frac d{dt}\(\|u\|^2_{L^2}+\|p\|^2_{\bar L^2}\)+\|\Nx u\|^2_{L^2}+(f(u),u)=(g,u).
\end{equation}
The theory of this equation is very similar to the case $B=0$ considered above with the only
difference that, in order to control the extra non-linearity $B$, we need to assume that $f(u)$
has a super-cubic growth rate, see  \cite{HR,KZ}, but this assumption
 is already incorporated to \eqref{0.f} if $l>1$.
\par
We start with the analogue of dissipative estimate \eqref{1.E-dis}. The analogue of \eqref{1.eq} is
 already obtained, so in order to get the key differential inequality \eqref{1.Gp}, we
  only need to estimate the extra term
\begin{multline}
|\eb(B(u,u),\mathfrak Bp)|\le C\eb\|u\|_{L^3}\|\Nx u\|_{L^2}\|p\|_{\bar L^2}\le
C\|u\|_{L^3}^3+\\+\frac12\|\Nx u\|_{L^2}^2+C\eb\|p\|^6_{\bar L^2}\le
 \frac12(f(u),u)+C+\frac12\|\Nx u\|^2_{L^2}+C\eb^6\Cal E_\eb(u,p)^3,
\end{multline}
where the constant $C$ is independent of $\eb$. Thus, analogously to Theorem \ref{Th1.E-dis}, we have
 the following result.

 \begin{proposition}\label{Prop7.E-dis} Let the assumptions of Theorem \ref{Th1.E-dis} hold, $l>1$ and let
  $(u,p)$ be a weak energy solution of problem \eqref{7.NS}. Then, this solution satisfies the
  dissipative estimate \eqref{1.E-dis}.
 \end{proposition}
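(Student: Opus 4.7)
My plan is to repeat the argument of Theorem~\ref{Th1.E-dis} specialized to $D=1$, keeping track only of the modifications caused by the convective term $B(u,u)$. First I would test the momentum equation in~\eqref{7.NS} against $u$. Because $(B(u,v),v)\equiv 0$ for $u,v\in H^1_0(\Omega)$, no contribution from $B$ survives, and I obtain exactly the energy identity~\eqref{7.eq}, which is the analogue of~\eqref{1.eq}. Thus the first half of the proof of Theorem~\ref{Th1.E-dis} carries over verbatim.

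The substantive new work is at the second step: testing against $\eb\,\mathfrak{B}p$ produces, on top of the terms already handled in~\eqref{1.eq1}, the extra contribution $\eb(B(u,u),\mathfrak{B}p)$. Using the continuity of the Bogovski operator $\mathfrak{B}\colon\bar L^2(\Omega)\to H^1_0(\Omega)\hookrightarrow L^6(\Omega)$ together with the standard 3D H\"older estimate for the trilinear form, I would bound this by $C\eb\|u\|_{L^3}\|\Nx u\|_{L^2}\|p\|_{\bar L^2}$. Two applications of Young's inequality, first with exponents $(2,2)$ to peel off $\tfrac12\|\Nx u\|_{L^2}^2$, then with $(3/2,3)$ applied to the residual $C\eb^2\|u\|_{L^3}^2\|p\|_{\bar L^2}^2$, split the right-hand side as
\[
\tfrac12\|\Nx u\|_{L^2}^2+C\|u\|_{L^3}^3+C\eb^6\|p\|_{\bar L^2}^6,
\]
with $C$ independent of $\eb$. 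The last term is exactly of the type appearing on the right-hand side of~\eqref{1.Gp}, so it fits naturally into the dissipativity scheme.

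The main obstacle, and the only place where the condition $l>1$ is genuinely used, is the absorption of $C\|u\|_{L^3}^3$. Since $l>1$ gives $2(l+1)>4>3$, the lower bound in~\eqref{0.f}.2 yields $(f(u),u)\ge\alpha\|u\|_{L^{2(l+1)}}^{2(l+1)}-C(\|u\|_{L^2}^2+1)$, and one more Young inequality gives $C\|u\|_{L^3}^3\le\tfrac12(f(u),u)+C_1$ up to lower-order $L^2$-terms that are swallowed by $\|\Nx u\|_{L^2}^2$ via Poincar\'e. Collecting the estimates exactly as in the displayed block preceding the proposition statement, I recover the differential inequality
\[
\frac d{dt}\Cal E_\eb(u,p)+\eb\,\Cal E_\eb(u,p)\le C\eb^6\,\Cal E_\eb(u,p)^3+C(\|g\|_{L^2}^2+1),
\]
which is formally identical to~\eqref{1.Gp}. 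Applying the same Gronwall lemma with a parameter (from \cite{GPZ,Pat,Z07}) as at the end of the proof of Theorem~\ref{Th1.E-dis} then delivers~\eqref{1.E-dis} for the Navier--Stokes--Brinkman--Forchheimer system, completing the proof.
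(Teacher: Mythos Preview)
Your proposal is correct and follows essentially the same approach as the paper: the energy identity is unchanged by the cancellation $(B(u,u),u)=0$, and the sole new term $\eb(B(u,u),\mathfrak Bp)$ is bounded by $C\eb\|u\|_{L^3}\|\Nx u\|_{L^2}\|p\|_{\bar L^2}$ and then split via Young's inequalities into $\tfrac12\|\Nx u\|_{L^2}^2+C\|u\|_{L^3}^3+C\eb^6\|p\|_{\bar L^2}^6$, with the cubic term absorbed into $\tfrac12(f(u),u)+C$ thanks to $l>1$. The resulting differential inequality coincides with~\eqref{1.Gp}, and the same Gronwall-with-parameter argument concludes.
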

Let us now turn to uniqueness. This can be proved exactly as in the incompressible case (see \cite{KZ}).
Indeed, in comparison with Theorem \ref{Th1.un}, we need to estimate the extra term
$$
(B(u_1,u_1)-B(u_2,u_2),u_1-u_2)=(B(\bar u,u_2),\bar u),\ \ \bar u=u_1-u_2,
$$
where $u_1$ and $u_2$ are two solutions of \eqref{7.NS}. Integrating by parts and
 using the Cauchy-Schwarz inequality, we get
 $$
|B(\bar u,u_2),\bar u)|\le \frac14\|\Nx \bar u\|^2_{L^2}+C\|u_2\bar u\|^2_{L^2}.
 $$
On the other hand, using assumptions \eqref{0.f}, analogously \cite{KZ}, we get
$$
(f(u_1)-f(u_2),\bar u)\ge \kappa(|u_1|^{1+l}+|u_2|^{1+l},|\bar u|^2)-L\|\bar u\|^2_{L^2}\ge
C\|u_2\bar u\|^2-\tilde L\|\bar u\|^2_{L^2}
$$
and, therefore,
$$
(B(u_1)-B(u_2),\bar u)+(f(u_1)-f(u_2),\bar u)\ge -\tilde L\|\bar u\|^2_{L^2}.
$$
Arguing further as in the proof of Theorem \ref{Th1.un}, we get estimate \eqref{1.lip} and
 verify the uniqueness of the solution for problem \eqref{7.NS}. Thus, as in the case of $B=0$,
  equation \eqref{7.NS}, generates a dissipative semigroup $S(t)$ in the phase space $E$.
\par
We now discuss the smoothing property and start with the instantaneous smoothing
 (analog of Theorem \ref{Th1.sm-ins}).
\begin{proposition}\label{Prop7.sm-ins} Let the  assumptions of Theorem \ref{Th1.E-dis} hold, $l>1$ and
let $(u,p)$ be a weak
 energy solution of equations \eqref{7.NS}.
Then the following partial smoothing property holds:
\begin{multline}\label{7.sm}
t^{8/3}\|\Nx u(t)\|^2_{L^2}+t^{8/3}\|\Dt u(t)\|^2_{L^2}+t^{8/3}\|\Dt p(t)\|^2_{\bar L^2}+\\+
\int_0^{t}s^{8/3}\|\Nx\Dt u(s)\|^2_{L^2}\,ds\le Q(\|(u(0),p(0)\|^2_{E})+Q(\|g\|_{L^2}),
\end{multline}
where $t\in[0,1]$ and a function $Q$ is independent of $t$ and $u$.
\end{proposition}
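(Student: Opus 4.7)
The plan is to follow the two-step smoothing scheme of the proof of Theorem~\ref{Th1.sm-ins}, with the common, enlarged time weight $s^{8/3}$ in both steps in order to accommodate the $3$D Navier--Stokes convective term $B(u,u)$. The uniform weight $t^{8/3}$ (rather than the weights $t$ and $t^{2}$ of Theorem~\ref{Th1.sm-ins}) is needed so that every H\"older/Sobolev/Young splitting of the convective terms leaves only residuals absorbable by the dissipation or controllable by Gronwall with an $L^{1}_{t}$-kernel coming from the basic estimate of Proposition~\ref{Prop7.E-dis}.

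\smallskip

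\textbf{Step~1.} First I would multiply the first equation of \eqref{7.NS} by $s^{8/3}\Dt u$ and integrate over $\Om$. Proceeding exactly as in the derivation of \eqref{1.dt} (note that $D=I$ in \eqref{7.NS}, so $\divv u=-\Dt p$), this produces a $\frac{d}{dt}$-identity for $\tfrac{s^{8/3}}{2}\|\Nx u\|^{2}+s^{8/3}(F(u),1)+\text{(pressure terms)}$, with dissipation $s^{8/3}\|\Dt u\|^{2}+s^{8/3}\|\Dt p\|^{2}$ on the left-hand side. The right-hand side carries the usual lower-order contributions (of weight $s^{5/3}$ coming from differentiating $s^{8/3}$, together with $s^{8/3}(g,\Dt u)$), plus the new convective contribution $-s^{8/3}(B(u,u),\Dt u)$. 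I would control the latter by H\"older's inequality ($L^{6}\!\cdot L^{2}\!\cdot L^{3}$) and Sobolev embeddings,
$$
|(B(u,u),\Dt u)|\le C\|u\|_{H^{1}}^{2}\,\|\Dt u\|_{L^{2}}^{1/2}\|\Nx\Dt u\|_{L^{2}}^{1/2},
$$
then apply Young's inequality with exponents $(4,4,2)$ to split
$$
s^{8/3}|(B(u,u),\Dt u)|\le\eta s^{8/3}\|\Dt u\|^{2}+\eta s^{8/3}\|\Nx\Dt u\|^{2}+Cs^{8/3}\|u\|_{H^{1}}^{4}.
$$
The first summand is absorbed by the Step~1 dissipation; the residual $Cs^{8/3}\|u\|_{H^{1}}^{4}=C\|u\|_{H^{1}}^{2}\cdot[s^{8/3}\|\Nx u\|^{2}]$ is treated by Gronwall with the $L^{1}_{t}$-kernel $\|u(s)\|_{H^{1}}^{2}$, which is bounded by Proposition~\ref{Prop7.E-dis}. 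The $s^{8/3}\|\Nx\Dt u\|^{2}$ term is deferred until it is matched by the dissipation produced in Step~2.

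\smallskip

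\textbf{Step~2.} Next I would differentiate \eqref{7.NS} in time and set $v:=\Dt u$, $q:=\Dt p$; the analogue of \eqref{1.mdif} reads
$$
\Dt v-\Dx v+\Nx q+f'(u)v+B(v,u)+B(u,v)=0,\qquad \Dt q+\divv v=0.
$$
Multiplying by $s^{8/3}v$, using the identity $(B(u,v),v)\equiv 0$ and the duality $(B(v,u),v)=-(B(v,v),u)$, the only nonstandard contribution is $(B(v,v),u)$, which by H\"older and the 3D embedding $H^{1}_{0}\subset L^{6}$ satisfies
$$
|(B(v,v),u)|\le C\|u\|_{L^{3}}\|\Nx v\|_{L^{2}}^{2}.
$$
The factor $\|u\|_{L^{3}}$ lies in $L^{4}_{t}$ on $[0,1]$ via the interpolation $\|u\|_{L^{3}}^{2}\le\|u\|_{L^{2}}\|u\|_{L^{6}}\le C\|u\|_{H^{1}}$ combined with \eqref{1.E-dis}. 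Absorbing a fraction of $s^{8/3}\|\Nx v\|^{2}$ into the left-hand side dissipation and applying Gronwall's inequality with the $L^{1}_{t}$-kernel $C\|u(s)\|_{L^{3}}$, one closes the bound
$$
s^{8/3}\bigl(\|v(s)\|^{2}+\|q(s)\|^{2}\bigr)+\int_{0}^{t}s^{8/3}\|\Nx v\|^{2}\,ds\le Q(\|(u(0),p(0))\|_{E})+Q(\|g\|_{L^{2}}).
$$
The $s^{8/3}\|\Dt p\|^{2}$ term is recovered from $\|\Dt p\|_{L^{2}}=\|q\|_{L^{2}}\le C\|\Nx v\|_{L^{2}}$ via $\Dt q+\divv v=0$, and the Step~1 residual $\eta s^{8/3}\|\Nx\Dt u\|^{2}$ is absorbed into $s^{8/3}\|\Nx v\|^{2}$.

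\smallskip

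The main obstacle is the simultaneous closure of the two convective estimates: the splitting of $(B(u,u),\Dt u)$ in Step~1 leaves behind a residual $s^{8/3}\|\Nx\Dt u\|^{2}$ that must be absorbed by the Step~2 dissipation, while the splitting of $(B(v,u),v)$ in Step~2 requires that its Gronwall kernel $\|u(s)\|_{L^{3}}$ be already in $L^{1}_{t}$, which is provided by Proposition~\ref{Prop7.E-dis}. The value $8/3$ is exactly the Young-inequality exponent dictated by the $3$D interpolation $H^{1}\hookrightarrow L^{6}$ used on the convective terms, and this is why every quantity on the LHS of \eqref{7.sm} carries the same factor $t^{8/3}$.
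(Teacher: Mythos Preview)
Your Step~2 does not close. After writing $(B(v,u),v)=-(B(v,v),u)$ you obtain $|(B(v,v),u)|\le C\|u\|_{L^3}\|\Nx v\|_{L^2}^2$, but this term multiplies the \emph{dissipation} $\|\Nx v\|^2$, not the Gronwall quantity $\|v\|^2+\|q\|^2$. It cannot be ``absorbed'' (the coefficient $C\|u\|_{L^3}$ is not small) and Gronwall with kernel $\|u(s)\|_{L^3}$ is inapplicable because the kernel sits in front of the wrong term. No alternative H\"older splitting helps either: e.g.\ $|(B(v,v),u)|\le C\|v\|_{L^2}^{1/2}\|\Nx v\|^{3/2}\|u\|_{H^1}$ yields a Gronwall kernel $\|u\|_{H^1}^4$, which is not in $L^1_t$ from the energy estimate. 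The paper closes this step in a different way that you never invoke: it uses the assumption $l>1$ (super-cubic $f$) to get $(f'(u)v,v)\ge c\||u|\,v\|_{L^2}^2-L\|v\|_{L^2}^2$, and estimates the convective term as $|(B(v,u),v)|\le\tfrac14\|\Nx v\|^2+C\||u|\,v\|_{L^2}^2$, exactly as in the uniqueness proof. The two combine to give the clean inequality \eqref{7.dtv}. Your argument never uses $l>1$, which is the whole point of the hypothesis.

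Your Step~1 is also not the paper's route and inherits the regularity problem the paper explicitly warns about: testing \eqref{7.NS} with $\Dt u$ produces $(B(u,u),\Dt u)$, which is not defined for weak energy solutions, and your remedy leaves a residual $\eta s^{8/3}\|\Nx\Dt u\|^2$ whose absorption depends on the Step~2 closure that fails. The paper instead skips this test entirely: it bounds $\|\Dt u\|_{L^{6/5}(0,1;H^{-1})}$ directly from the equation, then controls the term $\int_0^t s^{5/3}\|v\|^2\,ds$ (arising from differentiating the weight $s^{8/3}$) by the interpolation
\[
s^{5/3}\|v\|_{L^2}^2\le (s^{4/3}\|v\|_{L^2})^{1/2}(s^{4/3}\|v\|_{H^1})^{3/4}\|v\|_{H^{-1}}^{3/4},
\]
followed by Young's inequality. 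This interpolation---not a H\"older/Sobolev splitting of the convective term---is what fixes the exponent $8/3$. The bound on $\|\Nx u(t)\|_{L^2}$ is recovered at the very end by multiplying the first equation of \eqref{7.NS} by $u$ (where $(B(u,u),u)=0$). Finally, your claim $\|q\|_{L^2}\le C\|\Nx v\|_{L^2}$ ``via $\Dt q+\divv v=0$'' is incorrect; the relation $q=\Dt p=-\divv u$ comes from the original second equation and gives $\|q\|\le C\|\Nx u\|$, not $\|\Nx v\|$.
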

\begin{proof}
 Here, we have a little difference (in comparison with the proof of Theorem \ref{Th1.sm-ins}),
 namely, multiplication of the equation
 on $\Dt u$ does not work since we have not enough regularity to control the term $(B(u,u),\Dt u)$.
 By this reason, again similarly to the incompressible case (see \cite{KZ}), we need to differentiate
 the first equation of \eqref{7.NS} with respect to  $t$ and multiply it by $v=\Dt u$ at the first step.
 The nonlinearity
  $B(u,v)+B(v,u)$ is controlled here by the second nonlinearity $f'(u)v$ exactly as in the
   proof of uniqueness, so we get the following analogue of
\begin{equation}\label{7.dtv}
\frac12\frac d{dt}\(\|v(t)\|^2_{L^2}+\|\Dt p\|^2_{\bar L^2}\)+\|\Nx v\|^2_{L^2}\le \tilde L\|v\|^2_{L^2}.
\end{equation}
Moreover, from the first equation of \eqref{7.NS} and the dissipative estimate \eqref{1.E-dis},
 we infer after the standard estimates that
\begin{equation}\label{7.dtu}
\|v\|_{L^{6/5}(0,1;H^{-1})}\le Q(\|(u_0,p_0)\|_{E})+Q(\|g\|_{L^2}).
\end{equation}
Estimate \eqref{7.dtu} replaces the missed control of the quantity $\int_0^ts\|v(s)\|_{L^2}^2\,ds$ and
allows us to get the desired smoothing property. Indeed, multiplying  \eqref{7.dtv} by $t^{8/3}$, integrating
 in time and using the estimate
\begin{multline}
 \int_0^t s^{5/3}\|v(s)\|^2_{L^2}\,ds \le \\\le
  \int_0^s(s^{4/3}\|v(s)\|_{L^2})^{1/2}(s^{4/3}\|v(s)\|_{H^1})^{3/4}\|v(s)\|^{3/4}_{H^{-1}}\,ds \le \\\le
  \eb\sup_{s\in[0,t]}\left\{s^{8/3}\|v(s)\|_{L^2}^2\right\}+\eb\int_0^ts^{8/3}\|v(s)\|^2_{H^1}\,ds+
  C_\eb\|v\|_{L^{6/5}(0,1;H^{-1})}^2,
\end{multline}
where $\eb>0$ can be taken arbitrarily small, we end up with the desired smoothing property
 for the derivatives
 $$
 t^{4/3}\|\Dt u(t)\|_{L^2}+t^{4/3}\|\Dt p(t)\|_{\bar L^2}\le Q(\|u_0,p_0\|_{E})+Q(\|g\|_{L^2})
 $$
for $t\in[0,1]$ and some monotone function $Q$. Returning back to the first equation of \eqref{7.NS},
multiplying it by $u(t)$ and integrating in $x$, we get
$$
\|\Nx u(t)\|_{L^2}^2+(|f(u(t)).u(t)|,1)\le C(\|p(t)\|^2_{\bar L^2}+\|g\|^2_{L^2}+\|\Dt u(t(\|^2_{L^2})
$$
which together with the previous estimate and dissipative estimate \eqref{1.E-dis} give the
 desired smoothing property and finishes the proof of the proposition.
\end{proof}
As in the case $B=0$, this instantaneous smoothing property allows us to reduce the study of the
 asymptotic smoothing to the truncated problem
  \begin{equation}\label{7.trunc}
  \begin{cases}
  -\Dx u+\Nx p+B(u,u)+f(u)=g(t),\ \ \ u\big|_{\partial\Omega}=0\\
  \Dt p+\divv(Du)=0,\ \ p\big|_{t=0}=p_0,\  ,
\end{cases}
  \end{equation}
where $g(t):=g-\Dt u(t)$ satisfies \eqref{1.g}. Moreover, using the obvious estimate
\begin{equation}\label{7.Bhd}
\|B(u(t),u(t))\|_{H^{-1/2}}\le \|B(u(t),u(t))\|_{L^{3/2}}\le C\|\Nx u(t)\|^2_{L^2},
\end{equation}
we can assume without loss of generality (due to the dissipative estimate \eqref{1.E-dis} and
smoothing property \eqref{7.sm}) that the nonlinearity $B(u,u)$
is boun\-ded in $L^\infty(\R_+,H^{-1/2})$. Thus,
we may treat the nonlinearity $B(u,u)$ as a part of $g$ as well. Then the new function $g$ will
satisfy \eqref{7.g} and we may treat equations \eqref{7.trunc} exactly as equations \eqref{1.trunc}.
\par
This gives us the analogues of Corollaries \ref{Cor3.exp} and  \ref{Cor3.hd} for the truncated
system \eqref{7.trunc}. In order to make the second step of bootstrapping, we note that
$$
\| B(u(t),u(t))\|_{L^2}\le C\|u(t)\|_{H^{1+\delta}}
$$
for $\delta\ge\frac14$. Therefore, if the $H^{1+\delta}$-regularity of $u(t)$ is verified
for $\delta\ge\frac14$, the next step of bootstrapping will give us the $H^2$-regularity exactly
 as in the Section \ref{s2}. Thus, we have proved the following analogue of Theorem \ref{Th3.main}.

\begin{theorem}\label{Th7.main} Let the  assumptions of Theorem \ref{Th1.E-dis} hold and $l>1$.
Then the $R$-ball $\Bbb B_R^1$ in
 the higher energy space $E^1$
 is an exponentially attracting set for the solution semigroup $S(t): E\to E$ generated by the problem
  \eqref{7.NS} if $R$ is large enough, i.e., there exists $\alpha>0$ and monotone $Q$
  such that, for every bounded set $B\subset E$,
  \begin{equation}\label{7.mainattr}
\dist_E(S(t)B,\Bbb B_R^1)\le Q(\|B\|_E)e^{-\alpha t}.
  \end{equation}
Moreover, the problem \eqref{7.NS} is well-posed and dissipative in the space $E^1$ as well,
i.e., if $(u(0),p(0))\in E^1$ then the following estimate holds:
\begin{equation}\label{7.maindis}
\|(u(t),p(t))\|_{E^1}\le Q(\|u(0),p(0))\|_{E^1})e^{-\alpha t}+Q(\|g\|_{L^2})
\end{equation}
for some positive $\alpha$ and monotone $Q$.
\end{theorem}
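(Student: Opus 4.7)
The plan is to mirror the bootstrap strategy that produced Theorem \ref{Th3.main}, absorbing the extra convective nonlinearity $B(u,u)$ into the effective forcing at each stage. First, for any $(u_0,p_0)\in E$, Proposition \ref{Prop7.sm-ins} gives a dissipative control of $\|\Dt u(t)\|_{L^2}$ for $t\in(0,1]$, so the problem \eqref{7.NS} reduces after a short time to the truncated system \eqref{7.trunc} with new external force $g(t):=g-\Dt u(t)\in L^\infty(\R_+,L^2(\Omega))$. From this point on we work with \eqref{7.trunc}.

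For the initial bootstrap, use \eqref{7.Bhd} together with the basic dissipative estimate \eqref{1.E-dis} of Proposition \ref{Prop7.E-dis} to conclude that $B(u(t),u(t))$ is uniformly bounded in $L^\infty(\R_+;H^{-1/2}(\Omega))$. Moving this term to the right-hand side, the effective forcing for \eqref{7.trunc} satisfies \eqref{7.g} for every $\delta\in(0,1/2)$. Since the arguments of Propositions \ref{Prop3.contr}--\ref{Prop3.dis}, Lemma \ref{Lem3.w} and the non-linear splitting of Proposition \ref{Prop3.split} use the external force only through this $H^{-1+\delta}$ bound (and through the growth of $f$), they carry over unchanged. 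This yields the analogue of Corollary \ref{Cor3.hd}: for each $\delta\in[0,1/2)$, the truncated system is dissipative in $\bar H^\delta(\Omega)$ and the ball $\Cal B_R^\delta$ attracts exponentially from $\bar L^2(\Omega)$.

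For the second bootstrap, fix $\delta\ge 1/4$. A standard H\"older--Sobolev estimate gives $\|B(u(t),u(t))\|_{L^2}\le C\|u(t)\|^2_{H^{1+\delta}}\le C_R$, so the extra term now enters as a bounded $L^2$ forcing. The linear splitting of Proposition \ref{Prop3.boot} and Corollary \ref{Cor3.h1} then apply verbatim, giving dissipativity in $\bar H^1(\Omega)\times H^2(\Omega)$ and exponential attraction from $\bar H^\delta$ to $\Cal B_R^1$; combining this with the Lipschitz continuity of the solution operator $\Cal U(t)$ in $\bar L^2$ and the transitivity of exponential attraction yields the same attraction from $\bar L^2(\Omega)$. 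Returning to the full problem, a further application of Proposition \ref{Prop7.sm-ins} and of transitivity of exponential attraction gives \eqref{7.mainattr}. Finally, if $(u(0),p(0))\in E^1$, then \eqref{7.NS} directly provides $\|\Dt u(0)\|_{L^2}+\|\Dt p(0)\|_{\bar L^2}\le Q(\|(u(0),p(0))\|_{E^1})$, so the time weights $t^{8/3}$ in Proposition \ref{Prop7.sm-ins} become unnecessary and the $\bar H^1$ dissipative estimate \eqref{3.dish1} for the truncated system, now applied from $t=0$, yields \eqref{7.maindis}.

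The principal obstacle is checking that each bootstrap step is genuinely compatible with $B(u,u)$ in its prescribed role: at low regularity, that the coupling through $B$ does not spoil the contracting estimate of Proposition \ref{Prop3.contr} (it does not, since $B$ is transferred entirely into the smooth component analogous to \eqref{3.contr}); at high regularity, that the required $\delta\ge 1/4$ lies safely within the range $\delta<1/2$ delivered by the first step. Once these compatibilities are in place, the attractor construction itself is a routine adaptation of the arguments in Sections \ref{s2}--\ref{s3}.
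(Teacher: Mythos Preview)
Your proposal is correct and follows essentially the same route as the paper: reduce to the truncated system \eqref{7.trunc} via Proposition \ref{Prop7.sm-ins}, absorb $B(u,u)$ into the forcing first at the $H^{-1/2}$ level using \eqref{7.Bhd} to run the nonlinear splitting of Section \ref{s2}, then at the $L^2$ level once $\delta\ge\tfrac14$ to perform the second bootstrap, and finally invoke transitivity of exponential attraction. Your observation that Propositions \ref{Prop3.contr}--\ref{Prop3.dis} use the external force only through the bound \eqref{7.g} is exactly what the paper relies on, and your treatment of the $E^1$ dissipative estimate by dropping the time weights when the initial data lie in $E^1$ mirrors the argument for \eqref{3.maindis}.
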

Finally, we have the analogue of Theorem \ref{Th4.main} on exponential attractors.

\begin{theorem}\label{Th8.main} Let the assumptions of Theorem \ref{Th1.E-dis} hold and $l>1$.
Then equation \eqref{7.NS}
 possesses an exponential attractor $\Cal M$ in $E$ which is a bounded set in the space $E^1$.
\end{theorem}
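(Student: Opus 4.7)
The plan is to transfer the construction of Theorem \ref{Th4.main} from the pure Brinkman-Forchheimer equation to the Navier-Stokes-Brinkman-Forchheimer system \eqref{7.NS}, treating the convective term $B(u,u)$ as a controlled perturbation on the absorbing ball $\Bbb B_R^1\subset E^1$ provided by Theorem \ref{Th7.main}. First I would fix $T>0$ large enough and $R>0$ large enough so that, by \eqref{7.maindis}, $S(T)\Bbb B_R^1\subset\Bbb B_R^1$, and I would construct a discrete exponential attractor $\Cal M_d\subset\Bbb B_R^1$ for the iterations $S_n:=S(T)^n$ on $\Bbb B_R^1$ via Lemma \ref{Lem4.expattr} with $V:=(H^2\cap H^1_0)\times\bar H^1$ and $E:=H^1_0\times\bar L^2$. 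Once $\Cal M_d$ is available, the continuous exponential attractor $\Cal M$ is built through the standard formula \eqref{4.expdc}, and the desired exponential attraction of arbitrary bounded sets in $E$ follows from Theorem \ref{Th7.main}, the Lipschitz continuity of the flow in $E$, and transitivity of exponential attraction, exactly as in Theorem \ref{Th4.main}.

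The core work is thus to verify the splitting \eqref{4.sp}--\eqref{4.smo} for differences of two trajectories starting in $\Bbb B_R^1$. Let $(\bar u,\bar p):=(u_1-u_2,p_1-p_2)$. I decompose $(\bar u,\bar p)=(\hat u,\hat p)+(\tilde u,\tilde p)$ where $(\hat u,\hat p)$ solves the homogeneous linear system \eqref{4.hat} with the full initial data, and $(\tilde u,\tilde p)$ solves the inhomogeneous system
\begin{equation}
\Dt\tilde u-\Dx\tilde u+\Nx\tilde p=-l(t)\bar u-\Cal B(t),\quad \Dt\tilde p+\divv(D\tilde u)=0,
\end{equation}
with zero initial data, where $l(t):=\int_0^1 f'(\tau u_1+(1-\tau)u_2)\,d\tau$ and
\begin{equation}
\Cal B(t):=B(u_1,u_1)-B(u_2,u_2)=B(\bar u,u_1)+B(u_2,\bar u).
\end{equation}
The estimate on the contracting part $(\hat u,\hat p)$ is exactly \eqref{4.decc}, since the homogeneous linear equations are identical to those in Theorem \ref{Th4.main} and the convection plays no role there.

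For the smoothing part $(\tilde u,\tilde p)$, I exploit that on $\Bbb B_R^1$ both $u_1(t)$ and $u_2(t)$ are uniformly bounded in $H^2(\Omega)\subset C(\bar\Omega)$ by \eqref{7.maindis} and \eqref{3.dtdis}, while $\|\bar u(t)\|_{L^2}+\|\bar p(t)\|_{\bar L^2}$ grows at most exponentially by the analogue of \eqref{4.lip}. Combining these two facts with the standard Sobolev estimate
\begin{equation}
\|\Cal B(t)\|_{L^2}\le C\bigl(\|u_1\|_{L^\infty}+\|u_2\|_{L^\infty}+\|\Nx u_1\|_{L^3}+\|\Nx u_2\|_{L^3}\bigr)\|\bar u\|_{H^1}
\end{equation}
gives that the forcing $-l(t)\bar u-\Cal B(t)$ satisfies an analogue of \eqref{4.per}, namely it is controlled in $L^2$ by $Ce^{Kt}(\|\bar u(0)\|_{L^2}+\|\bar p(0)\|_{\bar L^2})$ provided we can upgrade the $L^2$ bound on $\bar u$ to an $H^1$ bound. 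This last point is the crux and can be handled by going through the standard parabolic energy estimate for the $(\bar u,\bar p)$-system exactly as in the uniqueness proof sketched after Proposition \ref{Prop7.E-dis}: the monotonicity $\kappa(|u_1|^{1+l}+|u_2|^{1+l},|\bar u|^2)$ extracted from $f$ dominates the bad term $C\|u_2\bar u\|_{L^2}^2$ coming from $\Cal B$ thanks to $l>1$, so one still recovers $\int_0^t\|\Nx\bar u(s)\|_{L^2}^2\,ds\le Ce^{Kt}(\|\bar u(0)\|_{L^2}^2+\|\bar p(0)\|_{\bar L^2}^2)$. Then the two estimates of the smoothing lemma of Theorem \ref{Th4.main}, obtained by multiplying by $\Dx\tilde u$ and $\Nx\tilde p$ respectively and summing, yield the analogue of \eqref{4.sms} in $H^1\times\bar H^1$.

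The main obstacle is precisely the absorption of the convective difference $\Cal B(t)$: one must ensure that the term $(B(\bar u,u_1)+B(u_2,\bar u),\bar u)$ does not ruin the differential inequality for $\|\bar u\|_{L^2}^2$ and is also controlled in $L^2$ (not just $H^{-1/2}$) for the smoothing step. The first is exactly the Navier-Stokes-Brinkman-Forchheimer uniqueness estimate available because $l>1$, and the second uses the $E^1$-regularity of $u_1,u_2$ on $\Bbb B_R^1$, which is the payoff of having established Theorem \ref{Th7.main} first. With these two ingredients in place, Lemma \ref{Lem4.expattr} applies verbatim for $T$ chosen so that $Ce^{-\alpha T}<\tfrac12$, producing $\Cal M_d$, and the construction of $\Cal M$ and the verification of its properties (compactness, semi-invariance, finite box-counting dimension, exponential attraction) proceed exactly as in the proof of Theorem \ref{Th4.main}, using the H\"older continuity in time guaranteed by the uniform control of $\Dt u$ and $\Dt p$ on $\Bbb B_R^1$.
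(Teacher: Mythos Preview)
Your proposal is correct and follows exactly the route the paper indicates: repeat the proof of Theorem~\ref{Th4.main} on the absorbing ball $\Bbb B_R^1$ supplied by Theorem~\ref{Th7.main}, with the only new ingredient being the $L^2$ control of the convective difference $\Cal B(t)=B(\bar u,u_1)+B(u_2,\bar u)$ via the $H^2\subset C\cap W^{1,3}$ regularity of $u_1,u_2$ and the integrated bound $\int_0^t\|\Nx\bar u\|_{L^2}^2\,ds$ coming from the uniqueness estimate for $l>1$. One small slip: the spaces you name for Lemma~\ref{Lem4.expattr} do not match the estimates you actually derive---\eqref{4.decc} is an $L^2\times\bar L^2$ contraction and \eqref{4.sms} is an $H^1\times\bar H^1$ smoothing, so the correct choice (as in the paper) is $E=L^2\times\bar L^2$ and $V=H^1_0\times\bar H^1$, not $E=H^1_0\times\bar L^2$ and $V=H^2\times\bar H^1$; with that correction everything goes through.
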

The proof of this result repeats word by word the proof of Theorem \ref{Th4.main} (we have more
 than enough regularity of solutions $u_1(t)$ and $u_2(t)$ to handle the extra nonlinear term)
  and by this reason is omitted.
\par
We conclude the exposition by several remarks.

\begin{remark} \label{Rem7.2D} We have considered equations \eqref{1.main} and \eqref{7.NS}
in the most complicated 3D case only. The 2D case can be treated analogously, but it is actually
essentially simpler. Indeed, due to the Sobolev embedding $H^1\subset L^q$ for all $q<\infty$, the
control of the $H^1$-norm of the solution $u$ gives the control of the $L^2$-norm of $f(u)$ for
 any growth exponent $l$, so the restriction $l\le2$ can be removed here and any polynomial
 nonlinearity is {\it subcritical} in 2D case.
 \par
 Another simplification comes from the fact that in 2D case the inertial term $B(u,u)$ can
 be handled without the help of the nonlinearity $f(u)$, so we do not need  to require the super-cubic
 growth rate of $f(u)$. In particular, the purely Navier-Stokes case $f(u)$ is also covered
  by our theory and gives some new results here as well. For instance, in comparison with \cite{GT}, we
  get the $E^1$-regularity of the attractor for the case of Dirichlet boundary conditions as well.
\end{remark}
\begin{remark}\label{Rem7.super} An interesting question is related with the supercritical case where
 the nonlinearity grows faster than $u|u|^4$. In the case of incompressible Brinkman-Forchheimer
  equations as well as in the case of strongly damped wave equations, the restriction $l\le2$
  in \eqref{0.f} is not necessary as shown in \cite{KZ,KZ09}. Some methods developed
  there can be extended to the case of equations \eqref{1.main} as well.
\par
 For, the existence of weak solutions in this case can
  be verified based on the energy identity \eqref{1.eq}, their uniqueness follows exactly as in the proof
   of Theorem \ref{Th1.un} where only the monotonicity assumption $f'(u)\ge-L$  is actually  used.
   Moreover, the local smoothing property and estimates for $\Dt u$ stated in Theorem
   \ref{Th1.sm-ins} also work for the super-critical case as well.
   \par
   However, there is a problem here which prevents us to treat the supercritical case, namely,
   the absence of a {\it dissipative} estimate for the solution $u$ in the energy norm. Indeed,
   the derivation of such an estimate in Theorem \ref{Th1.E-dis} is based on multiplication
    of the equation by $\mathfrak Bp$, where $\mathfrak B$ is a Bogowski operator, but in the
    supercritical case we cannot do this at least in a direct way since the term
     $(f(u),\mathcal Bp)$ is out of control. We believe that this problem has a technical nature
     which can be overcome and are planning to return to the supercritical case somewhere else.
\end{remark}

\appendix

\section{An auxiliary linear problem}\label{s4}

In this appendix, we study the following linear problem:
\begin{equation}\label{2.au}
\Dt p+\divv(Du)=0,\ \ -\Dx u+\Nx p=g(t),\ \ p\big|_{t=0}=p_0,\ \ u\big|_{\partial\Omega}=0.
\end{equation}
Note that, solving the second equation of \eqref{2.au} with respect to $u$, we get
\begin{equation}
u(t)=-(-\Dx)^{-1}\Nx p+(-\Dx)^{-1}g(t),
\end{equation}
where the Laplacian is endowed with the homogeneous Dirichlet boundary condition. Inserting this
 expression to the first equation, we arrive at
\begin{equation}\label{2.smp}
\Dt p+\mathfrak Ap=\divv(D(-\Dx)^{-1}g(t)),
\end{equation}
where
\begin{equation}\label{2.a}
\mathfrak Ap:=-\divv(D(-\Dx)^{-1}\Nx p).
\end{equation}
Thus, the key question here are the properties of the operator $\mathfrak A$.

\begin{proposition}\label{Prop2.bounds} The operator
$\mathfrak A\in\Cal L(\bar H^\delta(\Omega),\bar H^\delta(\Omega))$
 if $\delta>-\frac12$. Moreover, this operator is positive definite and self-adjoint
  in $\bar L^2(\Omega)$:
 \begin{equation}
(\mathfrak Ap,p)\ge \alpha\|p\|^2_{\bar L^2}, \ \ p\in \bar L^2(\Omega)
 \end{equation}
for some $\alpha>0$.
\end{proposition}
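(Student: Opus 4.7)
The plan is to reduce everything to the auxiliary vector field $w_p := (-\Dx)^{-1}\Nx p$, so that $\mathfrak Ap = -\divv(Dw_p)$ and $w_p\big|_{\p\Om}=0$ in the trace sense (the Dirichlet condition is inherited componentwise from $(-\Dx)^{-1}$).

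For the boundedness claim, I would invoke elliptic regularity for the Dirichlet Laplacian on the smooth bounded domain $\Om$: for $p \in \bar H^\delta(\Om)$ with $\delta > -\tfrac12$, we have $\Nx p \in H^{\delta-1}(\Om)$, hence $w_p \in H^{\delta+1}(\Om)$ with the natural norm bound. The restriction $\delta > -\tfrac12$ is exactly what ensures that the trace of $w_p$ still makes sense (so $w_p\big|_{\p\Om}=0$ is meaningful) and that the extension of $(-\Dx)^{-1}$ to negative Sobolev orders goes through by standard duality. Since $D$ is a constant symmetric matrix, $Dw_p \in H^{\delta+1}(\Om)$ and hence $\mathfrak Ap = -\divv(Dw_p) \in H^\delta(\Om)$. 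The zero-mean property $\<\mathfrak Ap\> = 0$ is immediate from the divergence theorem once one observes that $Dw_p$ vanishes on $\p\Om$, so in fact $\mathfrak Ap \in \bar H^\delta$.

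For self-adjointness and positivity, the key identity I would establish is
\begin{equation}
(\mathfrak Ap, q) = \sum_{k=1}^{3}\int_\Om D\,\p_k w_p \cdot \p_k w_q \, dx, \qquad w_q := (-\Dx)^{-1}\Nx q,
\end{equation}
obtained by two successive integrations by parts starting from $(\mathfrak Ap,q) = (Dw_p,\Nx q)$ and using $\Nx q = -\Dx w_q$, where all boundary contributions vanish because $w_p,w_q \in H^1_0$. The right-hand side is manifestly symmetric in $(p,q)$, so $\mathfrak A = \mathfrak A^*$ on $\bar L^2$. Taking $q=p$ and using $D = D^* \geq d_0 I$ gives the coercivity estimate $(\mathfrak Ap, p) \geq d_0 \|\Nx w_p\|_{L^2}^2$.

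The main obstacle is to convert this into the desired lower bound by $\|p\|_{\bar L^2}^2$, because elliptic theory only supplies the opposite estimate $\|\Nx w_p\|_{L^2} \leq C\|p\|_{L^2}$. I would close the gap with the Bogovski operator $\mathfrak B$ already used in the proof of Theorem~\ref{Th1.E-dis}: writing $p = \divv(\mathfrak Bp)$ and integrating by parts,
\begin{equation}
\|p\|_{\bar L^2}^2 = (p,\divv \mathfrak Bp) = -(\Nx p,\mathfrak Bp) = (\Dx w_p,\mathfrak Bp) = -(\Nx w_p,\Nx \mathfrak Bp),
\end{equation}
where all boundary contributions vanish because $w_p, \mathfrak Bp \in H^1_0$. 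The continuity $\|\Nx \mathfrak Bp\|_{L^2} \leq C_B \|p\|_{\bar L^2}$ of Bogovski together with Cauchy--Schwarz yields $\|p\|_{\bar L^2} \leq C_B \|\Nx w_p\|_{L^2}$, and substituting this into the coercivity bound gives the claimed positive-definiteness with $\alpha = d_0 / C_B^2$.
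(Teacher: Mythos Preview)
Your proof is correct and follows essentially the same route as the paper: boundedness via elliptic regularity for the Dirichlet Laplacian, the identity $(\mathfrak Ap,p)=(D\Nx w_p,\Nx w_p)\ge \alpha_1\|\Nx w_p\|_{L^2}^2$ obtained by two integrations by parts, and then the Bogovski operator to convert the lower bound by $\|\Nx w_p\|_{L^2}$ into one by $\|p\|_{\bar L^2}$. You simply spell out the self-adjointness identity and the Bogovski step in more detail than the paper, which dismisses the former as ``straightforward'' and summarizes the latter in one line.
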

\begin{proof} Indeed, the first statement is an immediate corollary of the classical elliptic
regularity estimates for the Laplacian, see e.g., \cite{Tri}, so we only need to check
 the stated properties for $\delta=0$. The fact that $\mathfrak A$ is self-adjoint is also straightforward, so
  we need to verify positiveness. Namely,
\begin{multline}
  (\mathfrak A p,p)=-(\divv(Du),p)=(Du,\divv p)=\\=-(Du,\Dx u)=
  (D\Nx u,\Nx u)\ge \alpha_1\|\Nx u\|^2_{L^2},
\end{multline}
  where $-\Dx u+\Nx p=0$ and $\alpha_1>0$ is the smallest eigenvalue of the matrix $D$. Using, e.g.,
   the Bogovski operator it is easy to show that $\|p\|^2_{\bar L^2}\le C\|\Nx u\|^2_{L^2}$ for
   some positive constant $C$. Thus, the proposition is proved.
\end{proof}
As an immediate corollary of this proposition, we get the following result.

\begin{corollary}\label{Cor2.bad} Let $p_0\in\bar H^\delta(\Omega)$ and
$g\in L^1(0,T;H^{\delta-1}(\Omega))$,
 $\delta>-\frac12$. Then,
the solution $p(t)$ of equation \eqref{2.smp} belongs to $\bar H^\delta$ for all $t\ge0$ and
the following estimate holds:
\begin{equation}\label{2.bad}
\|p(t)\|_{\bar H^\delta}\le C_\delta\|p(0)\|_{\bar H^\delta}e^{K_\delta t}+
C_\delta\int_0^te^{K_\delta(t-\tau)}\|g(\tau)\|_{H^{\delta-1}}\,d\tau,
\end{equation}
where the constants $C_\delta$ and $K_\delta$ depend only on $\delta$. In particular, for $\delta=0$,
the corresponding exponent $K_0=-\alpha<0$.
\end{corollary}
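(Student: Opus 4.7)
The plan is to treat \eqref{2.smp} as a linear ODE in the Banach space $\bar H^\delta(\Omega)$, using the fact from Proposition~\ref{Prop2.bounds} that $\mathfrak A$ is a bounded operator on $\bar H^\delta$ for every $\delta>-\tfrac12$, hence it generates a uniformly continuous semigroup $e^{-\mathfrak A t}$ on each such space. The solution of \eqref{2.smp} is then given explicitly by the Duhamel formula
$$
p(t)=e^{-\mathfrak A t}p_0+\int_0^t e^{-\mathfrak A(t-\tau)}\divv\!\bigl(D(-\Dx)^{-1}g(\tau)\bigr)\,d\tau,
$$
so the whole task reduces to (i) bounding the semigroup $e^{-\mathfrak A t}$ in $\bar H^\delta$ and (ii) bounding the forcing $F(\tau):=\divv(D(-\Dx)^{-1}g(\tau))$ in $\bar H^\delta$ in terms of $\|g(\tau)\|_{H^{\delta-1}}$.

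For step (ii), the map $(-\Dx)^{-1}:H^{\delta-1}(\Omega)\to H^{\delta+1}(\Omega)\cap H^1_0(\Omega)$ is bounded by standard elliptic regularity (valid in the stated range $\delta>-\tfrac12$, where the Dirichlet condition is compatible), so multiplication by the constant matrix $D$ and taking $\divv$ gives
$$
\|F(\tau)\|_{\bar H^\delta}\le C_\delta\,\|g(\tau)\|_{H^{\delta-1}}.
$$
(One also checks $\<F(\tau)\>=0$ since $F(\tau)$ is a divergence on a bounded domain with zero trace, keeping us inside $\bar H^\delta$.) For step (i) in the general case, I would just use the standard bound $\|e^{-\mathfrak A t}\|_{\mathcal L(\bar H^\delta)}\le e^{\|\mathfrak A\|_{\mathcal L(\bar H^\delta)}\,t}=:e^{K_\delta t}$ coming from the uniform boundedness of $\mathfrak A$.

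The sharper assertion $K_0=-\alpha<0$ for $\delta=0$ is where the self-adjointness and coercivity of Proposition~\ref{Prop2.bounds} enter. Multiplying \eqref{2.smp} by $p$ in $\bar L^2(\Omega)$ and using $(\mathfrak A p,p)\ge\alpha\|p\|^2_{\bar L^2}$ yields
$$
\tfrac12\tfrac{d}{dt}\|p\|^2_{\bar L^2}+\alpha\|p\|^2_{\bar L^2}\le \|F(t)\|_{\bar L^2}\|p\|_{\bar L^2},
$$
whence $\tfrac{d}{dt}\|p\|_{\bar L^2}+\alpha\|p\|_{\bar L^2}\le\|F(t)\|_{\bar L^2}$, and the integrating-factor trick immediately gives \eqref{2.bad} with $K_0=-\alpha$. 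Equivalently, since $\mathfrak A$ is positive self-adjoint in $\bar L^2$, the semigroup $e^{-\mathfrak A t}$ is a contraction with exponential rate $e^{-\alpha t}$ by the spectral theorem. Plugging the bound for $F$ into Duhamel closes the estimate in all cases.

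The only real subtlety I anticipate is checking that the Duhamel integral converges in $\bar H^\delta$ when $g$ is merely $L^1$ in time with values in $H^{\delta-1}$; this is routine because the integrand is dominated by $C_\delta e^{K_\delta(t-\tau)}\|g(\tau)\|_{H^{\delta-1}}$, which is integrable by hypothesis, and because $F(\tau)\in\bar H^\delta$ for a.e.\ $\tau$ by step~(ii). Nothing else beyond the ingredients already in Proposition~\ref{Prop2.bounds} is needed.
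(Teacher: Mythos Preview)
Your proposal is correct and matches the paper's intent: the paper states this result as an ``immediate corollary'' of Proposition~\ref{Prop2.bounds} without further proof, and your argument---bounded generator, Duhamel formula, elliptic regularity for the forcing term, and coercivity/self-adjointness for the $\delta=0$ decay rate---is exactly the standard route one would take to unpack that phrase. Nothing is missing.
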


\begin{remark} The result of Corollary \ref{Cor2.bad} gives the dissipative estimate for $\delta=0$ only.
 For other values of $\delta$, the constant $K_\delta$ a priori may be positive, then the
 obtained estimate will
  be not dissipative. This is related with the fact that we do not know a priori that the spectrum
   of operator $\mathfrak A$ is the same in all Sobolev spaces $\bar H^\delta(\Omega)$,
   so if it depends on $\delta$,
   then it may happen that equation \eqref{2.smp} may become unstable for some values of $\delta$. We
   expect that, in a fact, the spectrum of $\mathfrak A$ is {\it independent} of $\delta$, but failed to find
    the proper reference. So, in order to avoid the technicalities, we restrict ourselves to
    the most important for our purposes case $\delta=1$ and verify that the corresponding
     $K_1$ is also negative.
\end{remark}
\begin{proposition}Let $p_0\in\bar H^1(\Omega)$ and $g\in L^1(0,T;L^2(\Omega))$. Then, the solution
 $p(t)$ of the truncated problem \eqref{2.smp} satisfied the following estimate:
 \begin{equation}\label{2.h1dis}
\|p(t)\|_{\bar H^1}\le C\|p(0)\|_{\bar H^1}e^{-\alpha t}+C\int_0^t\|g(s)\|_{L^2}\,ds,
 \end{equation}
where the positive constants $C$ and $\alpha$ are independent of $t$ and $p$.
\end{proposition}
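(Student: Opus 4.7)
The plan is to return to the system $\Dt p+\divv(Du)=0$, $-\Dx u+\Nx p=g$ and establish a differential inequality of the form $\ddt\|\Nx p\|_{L^2}^2+\alpha\|\Nx p\|^2_{L^2}\le C(\|g\|^2_{L^2}+\|p\|^2_{\bar L^2})$; combined with the $L^2$-dissipativity already provided by Proposition~\ref{Prop2.bounds} (applied to absorb the lower-order $\|p\|^2_{\bar L^2}$ term), this yields the squared form of \eqref{2.h1dis} after a Gronwall step, while the stated $L^1$-in-$g$ form follows from the Duhamel trick applied to the square root $\ddt\|p\|_{\bar H^1}+\alpha\|p\|_{\bar H^1}\le C\|g\|_{L^2}$.

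The naive approach of multiplying $\Dt p+\divv(Du)=0$ by $-\Dx p$ fails because it generates the boundary integral $-\int_{\p\Om}\Dt p\,\p_n p\,dS$, which is uncontrolled since $p$ is not required to vanish on $\p\Om$. To sidestep this I follow the localization strategy inspired by \cite{KZ09}: I fix a smooth partition of unity $\{\chi_j\}_{j=0}^N$ subordinate to a finite cover of $\overline{\Om}$, where $\chi_0$ is compactly supported in $\Om$ and each remaining $\chi_j$ is supported in a small boundary patch where $\p\Om$ can be straightened by a $C^2$ diffeomorphism to a piece of $\{y_3=0\}$, and then I estimate each piece $\chi_j p$ separately.

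For the interior cutoff, $\chi_0 p$ has compact support in $\Om$, so commuting $\chi_0$ through the system yields $\Dt(\chi_0 p)+\divv(D(\chi_0 u))=\Nx\chi_0\cdot Du$ together with $-\Dx(\chi_0 u)+\Nx(\chi_0 p)=\chi_0 g+K_{\chi_0}(u,p)$, where $K_{\chi_0}$ collects first-order commutator terms bounded in $L^2$ by $\|u\|_{H^1}$. Testing the first equation against $-\Dx(\chi_0 p)$ and the second against $-\Dx(\chi_0 u)$ and integrating by parts loses no boundary contributions because of the cutoff, so the direct energy method produces the desired $\|\Nx(\chi_0 p)\|_{L^2}^2$ estimate up to already-controlled lower-order quantities.

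For the boundary patches, after straightening, the Dirichlet condition $u|_{\{y_3=0\}}=0$ is preserved by any purely tangential derivative $\p_\tau\in\{\p_{y_1},\p_{y_2}\}$. Differentiating the localized system tangentially produces a system of the same structure for $(\p_\tau(\chi_j p),\p_\tau(\chi_j u))$ with lower-order curvature and commutator terms that are $L^2$-bounded thanks to $L^2$-dissipativity of $p$ and standard $H^2$ elliptic regularity of $u$. Applying the positivity-based $L^2$ energy identity from Proposition~\ref{Prop2.bounds} at this tangentially-differentiated level controls $\|\chi_j\p_\tau p\|_{L^2}$ with exponential decay. The remaining normal component $\chi_j\p_{y_3}p$ is then recovered algebraically, not by differentiation: the third component of $-\Dx u+\Nx p=g$ reads $\p_{y_3} p=g_3+\Dx u_3$, and once the tangential $H^1$-bound on $p$ upgrades the elliptic equation to a uniform $u\in H^2$ bound, $\|\Dx u_3\|_{L^2}$ is under control without ever producing a normal-derivative boundary term. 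Summing over $j$, absorbing the $\|p\|_{\bar L^2}$ remainder via Corollary~\ref{Cor2.bad} at $\delta=0$, and integrating yields \eqref{2.h1dis}.

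The main obstacle is precisely the boundary analysis: localizing in the tangential directions is standard, but localizing in the normal direction would reintroduce the bad $\Dt p\,\p_n p$ term. The non-standard feature of the argument is that the normal derivative is never hit with an energy multiplier at all; rather it is reconstructed from the elliptic equation $-\Dx u+\Nx p=g$ once the tangential estimate, which uses only the $L^2$ positivity of $\mathfrak A$, has been secured.
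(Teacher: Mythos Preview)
Your interior and tangential steps match the paper's strategy: localise, multiply by the system's own positivity structure, and obtain decay of $\|\partial_\tau p\|_{L^2}$ with lower-order remainders absorbed by the $\bar L^2$ estimate. The difference, and the gap, is in the normal direction.

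Your claimed algebraic recovery of $\partial_{y_3}p$ is circular. You write $\partial_{y_3}p=g_3+\Delta u_3$ and assert that the tangential $H^1$-bound on $p$ upgrades $u$ to $H^2$, making $\|\Delta u_3\|_{L^2}$ available. But elliptic regularity for $-\Delta u=g-\nabla p$ gives $u\in H^2$ only when the full gradient $\nabla p\in L^2$, which already requires $\partial_{y_3}p\in L^2$. After the tangential step you do control $\partial^2_{y_i}u_3$ for $i=1,2$ and you control $\partial^2_{y_3}u_1,\partial^2_{y_3}u_2$ (from the $i=1,2$ components of the elliptic equation, whose right-hand sides involve only tangential derivatives of $p$). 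What you \emph{cannot} separate is $\partial^2_{y_3}u_3$ from $\partial_{y_3}p$: the normal component of the elliptic equation gives only their difference $\partial^2_{y_3}u_3-\partial_{y_3}p\in L^2$, not either quantity individually.

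The paper closes precisely this gap by \emph{not} arguing algebraically. It takes the normal derivative of the evolution equation $\Dt p+\divv(Du)=0$ and observes that, after using the regularity already established for tangential derivatives of $u$ and for $u_{\tau_1},u_{\tau_2}\in H^2$, the leading part reads
\[
\Dt(\partial_n p)+(Dn\cdot n)\,\partial_n^2 u_n=h(t),\qquad \|h(t)\|_{L^2}\le Ce^{-\alpha t}\|p(0)\|_{\bar H^1}.
\]
Combining this with the algebraic relation $\|\partial_n^2 u_n-\partial_n p\|_{L^2}\le Ce^{-\alpha t}\|p(0)\|_{\bar H^1}$ (which \emph{is} the useful content of your third-component identity) and the strict positivity $Dn\cdot n\ge\alpha_1>0$, one obtains a genuine differential inequality $\ddt\|\partial_n p\|_{L^2}^2+\alpha_2\|\partial_n p\|_{L^2}^2\le Ce^{-2\alpha t}\|p(0)\|_{\bar H^1}^2$, and Gronwall finishes. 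So the normal estimate is evolutionary, not algebraic, and it uses the positive-definiteness of $D$ a second time (beyond its role in the $\bar L^2$ energy). Your outline needs this ingredient to close.
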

\begin{proof} In the case of {\it periodic} boundary conditions, the desired estimate
 can be obtained just by multiplying equation \eqref{2.smp} by $\Dx p$.
 However, this does not work in the case of Dirichlet boundary conditions because of  the presence of extra
 boundary integrals arising after integration by parts. So, in this case we will use the
 localization technique instead. Note also that we only need to verify \eqref{2.h1dis}
  for $g=0$. The general case will follow
  then form the Duhamehl formula.
\par
{\it Step 1. Interior estimates.} Let us fix a non-negative cut-off function
 $\phi(x)\in C^1_0(\R)$ such that
$\phi(x)=0$ if $x$ is in the $\mu/2$-neighbourhood of the boundary
 $\partial\Omega$ and $\phi\equiv1$
 if $x\in\Omega$ and is outside the $\mu$-neighbourhood of $\Omega$. In addition, we require that
 $$
 |\Nx \phi(x)|\le C\phi(x)^{1/2},\ \ x\in\Omega.
 $$
 It is not difficult to see that such a function exists for all $\mu>0$ small enough.
\par
We write equation \eqref{2.smp} as a system \eqref{2.au} with $g=0$ (in order to avoid the
 inverse Laplacian) and multiply the first equation by $-\divv(\phi\Nx p)$. Then, after
 integration by $x$, we get
\begin{multline}
\frac12\frac d{dt}(\phi,|\Nx p|^2)=(\divv(Du),\divv(\phi\Nx p))=\\=
\sum_{i=1}^3(\divv(Du),\partial_{x_i}(\phi\partial_{x_i}p))=
-\sum_{i=1}^3(Du,\partial_{x_i}(\phi\partial_{x_i}\Nx p))-\\-
\sum_{i=1}^3(Du,\partial_{x_i}(\Nx\phi\partial_{x_i}p)=
-\sum_{i=1}^3(\partial_{x_i}(D\partial_{x_i}u),\Nx p)+\\+
 (\divv(Du),\Nx\phi\cdot\Nx p)=
-\sum_{i=1}^3(\partial_{x_i}(D\phi\partial_{x_i}u),\Dx u)+\\+
 (\divv(Du),\Nx\phi\cdot\Nx p)=
-(\phi D\Dx u,\Dx u)-(D\Dx u\cdot\Nx\phi,\divv(u))+\\+(\divv(Du),\Nx\phi\cdot\Nx p)\le
-\alpha_1(\phi,|\Dx u|^2)+\frac{\alpha_1}4(\phi,|\Dx u|^2)+\\+\frac{\alpha_1}4(\phi,|\Nx p|^2)+
C\|\Nx u\|^2_{L^2}\le -\frac{\alpha_1}2(\phi,|\Nx p|^2)+C\|p\|^2_{\bar L^2}.
\end{multline}
Since we have already known from Corollary \ref{Cor2.bad} that
\begin{equation}\label{2.goodp}
\|p(t)\|_{\bar L^2}+\|\Nx u(t)\|^2_{L^2}\le Ce^{-\alpha t}\|p(0)\|_{L^2},
\end{equation}
then applying the Gronvall inequality to the obtained relation, we get the desired interior
 dissipative estimate:
\begin{equation}
(\phi,|\Nx p(t)|^2)\le C\((\phi,|\Nx p(0)|^2)+\|p(0)\|^2_{\bar L^2}\)e^{-\beta t},
\end{equation}
where $C$ and $\beta$ are some {\it positive} constants.
\par
{\it Step 2. Boundary estimates: tangential directions.} Let us introduce in a small neighbourhood
 of the boundary three smooth orthonormal vector fields
 $$
 \tau_3(x):=n=(n^1(x),n^2(x),n^3(x)), \ \ \tau_1(x):=(\tau_1^1(x),\tau_1^2(x),\tau_1^3(x))
 $$
 and $\tau_2(x):=(\tau_2^1(x),\tau_2^2(x),\tau_2^3(x))$ such that $n(x)$ coincides with the outer normal
  vector when $x\in\partial\Omega$ and $\tau_1(x),\tau_2(x)$ give the complement pair of
  tangential vectors. This triple of vector field may not exist globally near
   the boundary, but only locally, so being pedantic we need to use the partition of unity near
   the boundary to localize them, but we ignore this standard procedure in order to avoid technicalities
    (this localization can be done exactly in the way how we get interior estimates). After defining
     the triple of vector fields near the boundary, we use the proper scalar cut-off function in order
      to extend these fields to the whole domain~$\bar\Omega$.
      \par
      Let us define the corresponding differentiation operators along these vector fields:
      $$
      \partial_{\tau_i}u:=\sum_{j=1}^3\tau^j_i(x)\partial_{x_j}u
      $$
In contrast to the differentiation with respect to coordinate directions, these operators do not
commute in general, but their commutator is a lower order operator (again first order differential
operator):
$$
[\partial_{\tau_i},\partial_{\tau_j}]=\partial_{\{\tau_i,\tau_j\}},
$$
where $\{\tau_i,\tau_j\}$ is a Lie bracket of vector fields $\tau_i$ and $\tau_j$. This commutation
 up to lower order terms is important for our method. One more crucial fact for us is that the
 condition $u\big|_{\partial\Omega}=0$ implies that $\partial_{\tau_i}u\big|_{\partial\Omega}=0$,
  $i=1,2$ so differentiation with respect to tangential derivatives preserve the Dirichlet
  boundary conditions.
\par
We are now ready to get the desired estimates for tangential derivatives. To this end, we denote
 $q:=\partial_{\tau_i}p$ and $v=\partial_{\tau_i}u$. Then, differentiating  the equations \eqref{2.au}
 in the direction $\tau_i$, we arrive at
 \begin{equation}\label{2.tan}
\Dt q+\divv(Dv)+M(x)\Nx u=0,\ \ -\Dx v+\Nx q=N(x)\Nx p+Ru,
 \end{equation}
where the matrices $M$ and $N$ are smooth and $R$ is a linear second order differential
operator with smooth coefficients. Multiplying the first and second equations of \eqref{2.tan} by $q$ and $Dv$
respectively, we arrive at
\begin{multline}\label{2.tand}
\frac12 \frac d{dt} \|q\|^2_{L^2}=(Dv,\Nx q)-(M\Nx u,q)-(D\Nx v,\Nx v)-
\\-(\Nx q, Dv)+(N\Nx p,Dv)+(Ru,Dv)\le\\
\le
-\alpha_1\|\Nx v\|^2_{L^2}
+\eb\|\Nx v\|^2_{L^2}+\eb\|q\|^2_{L^2}+C_\eb(\|\Nx u\|^2_{L^2}+\|p\|^2_{L^2}),
\end{multline}
where $\eb>0$ can be arbitrarily small. Moreover, multiplying the second equation by $Bq$ after the standard estimates,
we get
\begin{equation}
\|q\|_{L^2}^2\le C\|\Nx v\|_{L^2}+C\|\Nx u\|^2_{L^2}+C\|p\|_{L^2}^2.
\end{equation}
Inserting this estimate in \eqref{2.tand} and fixing $\eb>0$ to be small enough, we finally arrive at
\begin{equation}
\frac d{dt}\|q\|_{L^2}^2+\bar\alpha\|q\|^2_{L^2}\le C\|\Nx u\|^2_{L^2}+\|p\|^2_{L^2}
\end{equation}
for some $\bar\alpha>0$. Applying the Gronwall inequality to this relation and using \eqref{2.goodp},
we have
\begin{equation}
\|\partial_{\tau_1}p(t)\|^2_{L^2}+\|\partial_{\tau_2} p(t)\|_{L^2}^2\le C e^{-\alpha t}\|\Nx p(0)\|^2_{L^2},
\end{equation}
where $\alpha>0$ and $C$ are independent of $p$ and $t$. Thus, the desired estimates for tangential
 derivatives are obtained.
 \par
 {\it Step 3. Boundary estimates: normal direction.} We now want to estimate the normal
 derivative $\partial_n p$ using equations \eqref{2.au} and the already obtained estimates for the
  tangential derivatives. To this end, we need some preparations. Let us write the vector $u$ in the form
  $$
u=u_n n+u_{\tau_1}\tau_1+u_{\tau_2}\tau_2,\ \ u_n:=u.n,\ \ u_{\tau_i}=u.\tau_i.
  $$
  Then, multiplying the second equation of \eqref{2.au} by $\tau_i$, $i=1,2$ and  using the fact that
  the $L^2$-norm of $\partial_{\tau_i}p$ as well as $H^1$-norm of $u$ are already estimated, we get
\begin{equation}\label{2.tt}
\|u_{\tau_1}\|_{H^2}+\|u_{\tau_2}\|_{H^2}\le Ce^{-\alpha t}\|p(0)\|_{H^1}.
\end{equation}
Moreover, multiplying the second equation of \eqref{2.au} by $n$ and using that the $H^1$-norms
 of tangential derivatives of $u$ are already under the control, we arrive at
\begin{equation}\label{2.np}
 \|\partial^2_nu_n(t)-\partial_n p(t)\|_{L^2}\le Ce^{-\alpha t}\|p(0)\|_{H^1}.
\end{equation}
 We now return to the first equation of \eqref{2.au} (the equation for pressure). Taking the normal
 derivative from both sides of this equation and using \eqref{2.tt} and the fact that the $H^1$-norm of
 $\partial_{\tau_j}u_{n}$, $j=1,2$ are also under the control, we arrive at
 $$
 \Dt\partial_n p+(Dn.n)\partial_n^2u_n=h(t),\ \ \|h(t)\|_{L^2}\le Ce^{-\alpha t}\|p(0)\|_{H^1}.
 $$
 Multiplying the obtained equation by $\partial_n p$, integrating over $x$ and using \eqref{2.np}
 together with positivity of the matrix $D$, we finally get
 $$
 \frac d{dt}\|\partial_n(t)\|^2_{L^2}+\alpha_2\|\partial_n p(t)\|^2_{L^2}\le
  Ce^{-2\alpha t}\|p(0)\|^2_{H^1}
$$
and applying the Gronwall inequality, we get the desired estimate for the normal derivative:
$$
\|\partial_n p(t)\|^2_{L^2}\le Ce^{-\alpha t}\|p(0)\|_{H^1}^2
$$
where $\alpha>0$ and $C$ are independent of $t$ and $u$.
\par
Combining together the obtained interior, tangential and normal estimates, we derive that
$$
\|p(t)\|_{\bar H^1}^2\le Ce^{-\alpha t}\|p(0)\|^2_{\bar H^1}
$$
and finish the proof of the proposition.
\end{proof}
\begin{corollary}\label{CorA.main} Let the assumptions of Corollary \ref{Cor2.bad} hold
 and let $\delta\in[0,1]$. Then, the
 corresponding estimate \eqref{2.bad} holds with $K_\delta\le-\alpha<0$.
\end{corollary}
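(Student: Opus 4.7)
The plan is to treat Corollary \ref{CorA.main} as an interpolation result between the two endpoint cases already established: $\delta=0$ from Corollary \ref{Cor2.bad} (where $K_0 = -\alpha < 0$) and $\delta=1$ from the preceding proposition (where $K_1 = -\alpha < 0$ by \eqref{2.h1dis}). For intermediate $\delta \in (0,1)$, I would interpolate the solution semigroup $T(t) := e^{-t\mathfrak A}$ of \eqref{2.smp} between the $\bar L^2$ and $\bar H^1$ settings.

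First I would isolate the homogeneous case $g=0$, where $p(t) = T(t) p(0)$. The two endpoint bounds read
\begin{equation*}
\|T(t) p_0\|_{\bar L^2} \le C e^{-\alpha t} \|p_0\|_{\bar L^2}, \qquad \|T(t) p_0\|_{\bar H^1} \le C e^{-\alpha t} \|p_0\|_{\bar H^1}.
\end{equation*}
Complex (or real) interpolation, combined with the standard identification $[\bar L^2(\Omega), \bar H^1(\Omega)]_\delta = \bar H^\delta(\Omega)$ on a smooth bounded domain, yields $\|T(t) p_0\|_{\bar H^\delta} \le C_\delta e^{-\alpha t} \|p_0\|_{\bar H^\delta}$, which is precisely the homogeneous part of \eqref{2.bad} with $K_\delta = -\alpha$. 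For the inhomogeneous contribution I would then invoke Duhamel's formula
\begin{equation*}
p(t) = T(t) p(0) + \int_0^t T(t-\tau) \divv\bigl(D(-\Dx)^{-1} g(\tau)\bigr)\, d\tau,
\end{equation*}
observing that elliptic regularity makes $g \mapsto \divv(D(-\Dx)^{-1} g)$ bounded from $H^{\delta-1}(\Omega)$ to $\bar H^\delta(\Omega)$; applying the interpolated semigroup bound slice-wise inside the convolution reproduces the second term of \eqref{2.bad} with the same exponent $-\alpha$.

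The main delicate point is the interpolation identity $[\bar L^2, \bar H^1]_\delta = \bar H^\delta$ together with its compatibility with the zero-mean constraint. The Sobolev interpolation is classical for smooth $\partial\Omega$ (see \cite{Tri}), and the mean-zero condition is preserved because $p \mapsto \<p\>$ is a bounded linear functional on each $H^s(\Omega)$, so the closed subspaces $\bar H^s$ form a regular interpolation couple and the interpolated norms coincide, up to equivalence, with the restricted Sobolev norms. Once this identification is settled, no further spectral information about $\mathfrak A$ on $\bar H^\delta$ is needed: the exponential decay constant $-\alpha$ is inherited uniformly from the two endpoints, and the ambiguity noted in the remark preceding the corollary is bypassed entirely on the range $\delta \in [0,1]$.
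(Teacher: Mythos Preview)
Your approach is correct and matches the paper's own argument exactly: the paper's proof consists of the single remark that the cases $\delta=0$ and $\delta=1$ have already been established and that ``for fractional values $0<\delta<1$, the result follows by the interpolation.'' You have simply supplied the details (semigroup interpolation on $[\bar L^2,\bar H^1]_\delta=\bar H^\delta$, then Duhamel for the inhomogeneous term) that the paper leaves implicit.
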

Indeed, we have verified this property for $\delta=0$ and $\delta=1$. For fractional values $0<\delta<1$, the result
 follows by the interpolation.

\end{document}